\theoremstyle{plain}
\newtheorem{thm}{Th\'eor\`eme}[subsection]
\newtheorem{definition}[thm]{D\'efinition}
\newtheorem{proposition}[thm]{Proposition}
\newtheorem{thmm}{Théorème}[section]
\newtheorem{Propositionn}[thmm]{Proposition}
\newtheorem{lemmee}[thmm]{Lemme}
\newtheorem{corollaire}[thm]{Corollaire}
\newtheorem{théorème}[thm]{Théorème}
\newtheorem{lemme}[thm]{Lemme}
\newtheorem{Construction}[thm]{Construction}
\newcommand*{\bigboxplus}{\DOTSB\mathop{\mathpalette\big@boxplus\relax}\slimits@}
\newcommand{\op}{
	\mathop{
		\vphantom{\bigoplus} 
		\mathchoice
		{\vcenter{\hbox{\resizebox{\widthof{$\displaystyle\bigoplus$}}{!}{$\boxplus$}}}}
		{\vcenter{\hbox{\resizebox{\widthof{$\bigoplus$}}{!}{$\boxplus$}}}}
		{\vcenter{\hbox{\resizebox{\widthof{$\scriptstyle\oplus$}}{!}{$\boxplus$}}}}
		{\vcenter{\hbox{\resizebox{\widthof{$\scriptscriptstyle\oplus$}}{!}{$\boxplus$}}}}
	}\displaylimits 
}
\newcommand{\mylabel}[2]% #1=name, #2 = contents
	{\protected@write\@auxout{}{\string\newlabel{#1}{{#2}{\thepage}%
				{\@currentlabelname}{\@currentHref}{}}}}}%
\newcommand{\mylabel}[2]% #1=name, #2 = contents
	{\protected@write\@auxout{}{\string\newlabel{#1}{{#2}{\thepage}}}}}
\theoremstyle{definition}
\newtheorem{remarque}[thm]{Remarque}
\newtheorem{notation}[thm]{Notation}
\newtheorem*{demn}{Démonstration du théorème principal}
\newtheorem{exemple}[thm]{Exemple}
\def \e {\underline{e}}
\def \N {\mathbb{N}}
\def \Q {\mathbb{Q}}
\def \Z {\mathbb{Z}}
\def \R {\mathbb{R}}
\def \C {\mathbb{C}}
\def \F {\mathbb{F}}
\def \G {\mathbb{G}}
\def \Gm {\mathbb{G}_m}
\def \X {\mathbb{X}}
\def \A {\mathbb{A}}
\def \Cent{\mathop{\mathrm{Cent}}\nolimits}
\def \hom{\mathop{\mathrm{Hom}}\nolimits}
\def \End{\mathop{\mathrm{End}}\nolimits}
\def \dim{\mathop{\mathrm{dim}}\nolimits}
\def \diag{\mathop{\mathrm{diag}}\nolimits}
\def \Fr{\mathop{\mathrm{Fr}}\nolimits}
\def \Lie{\mathop{\mathrm{Lie}}\nolimits}
\def \det{\mathop{\mathrm{det}}\nolimits}
\def \Sh{\mathop{\mathrm{Sh}}\nolimits}
\def \Aut{\mathop{\mathrm{Aut}}\nolimits}
\def \Id{\mathop{\mathrm{Id}}\nolimits}
\def \Res{\mathop{\mathrm{Res}}\nolimits}
\def \J{\mathop{\mathrm{J}}\nolimits}
\def \Spf{\mathop{\mathrm{Spf}}\nolimits}
\def \Lie{\mathop{\mathrm{Lie}}\nolimits}
\def \diag{\mathop{\mathrm{diag}}\nolimits}
\def \Tr{\mathop{\mathrm{Tr}}\nolimits}
\def \ker{\mathop{\mathrm{ker}}\nolimits}
\def \Ext{\mathop{\mathrm{Ext}}\nolimits}
\def \Ind{\mathop{\mathrm{Ind}}\nolimits}
\def \Im{\mathop{\mathrm{Im}}\nolimits}
\def \mod{\mathop{\mathrm{mod}}\nolimits}
\def \Tr{\mathop{\mathrm{Tr}}\nolimits}
\begin{document}
	\setcounter{tocdepth}{2}
	\title{Un cas PEL de la conjecture de Kottwitz}
	
%	\alttitle{A particular PEL case of the Kottwitz conjecture}
	\author{NGUYEN Kieu Hieu}
%	\email{kieuhieu.nguyen@math.univ-paris13.fr}
%	\address{Université Paris 13, Sorbonne Paris-Cité\\
%		LAGA, CNRS, UMR 7539\\
%		F-93430, Villetaneuse, FRANCE\\
%		PerCoLarTor, ANR-14-CE25}
\maketitle
\newtheorem*{Résumé}{Résumé}
	\begin{Résumé}
		La conjecture de Kottwitz décrit la cohomologie des espaces de Rapoport-Zink basiques à l'aide des correspondances de Langlands locales. Dans cet article, par voie globale via l'étude de la géométrie de certaines variétés de Shimura de type Kottwitz, on prouve cette conjecture pour des espaces de Rapoport-Zink de type PEL unitaires non ramifiés simples basiques de signature $ (1, n-1)$.   
	\end{Résumé}
\newtheorem*{Abstract}{Abstract}
\begin{Abstract}
	The Kottwitz conjecture describes the cohomology of basic Rapoport-Zink spaces using local Langlands correspondences. In this paper, via geometrical studies of some Kottwitz-type Shimura varieties, we prove this conjecture for basic simple unramified unitary PEL type Rapoport-Zink spaces of signature $(1, n-1)$.	
\end{Abstract}
%	\begin{altabstract}
%		The Kottwitz conjecture describes the cohomology of basic Rapoport-Zink spaces using local Langlands correspondences. In this paper, via geometrical studies of some Kottwitz-type Shimura varieties, we prove this conjecture for basic simple unramified unitary PEL type Rapoport-Zink spaces of signature $(1, n-1)$.	
%	\end{altabstract}

\thispagestyle{fancy}
\fancyhf{}
\fancyfoot[L]{\textbf{Classification mathématique par sujet (2010)} 11F70, 11F80, 11F85, 11G18, 20C08}
	
%	\subjclass{11F70, 11F80, 11F85, 11G18, 20C08}
	
%	\keywords{}
	
%	\altkeywords{}

	\tableofcontents
	\section*{Introduction}
	Les variétés de Shimura jouent un rôle important dans le programme de Langlands global qui prédit un lien entre représentations automorphes des groupes linéaires et représentations galoisiennes. Rapoport et Zink ont introduit des analogues $p$-adiques définis comme des espaces de modules de groupes $p$-divisibles munis de structures additionnelles  (\cite{RZ96}). La cohomologie $\ell$-adique $(\ell \neq p)$ de ces espaces devrait fournir l'incarnation locale des correspondances de Langlands, c'est le sujet de la conjecture de Kottwitz \cite{Rap94}.
	
	Selon la terminologie de \cite{RZ96}, les cas abordés jusqu'à présent sont tous de type EL : le cas Lubin-Tate a été entièrement traité dans \cite{Boyer99}, \cite{HT01} et \cite{Boyer09}; par dualité \cite{Fal} \cite{FGL} \cite{SW17}, on obtient aussi le cas de Drinfeld et le cas EL général est démontré par \cite{Far04}, \cite{Shin}.
	
	Dans cet article on traite un cas PEL unitaire non ramifié simple basique où une donnée de Rapoport-Zink est un uplet $ \mathcal{D}_{\Q_p} = (F_p, *, V, \langle \cdot | \cdot \rangle, G, \mu, b)$ auquel on associe un groupe $p$-divisible muni de structures additionnelles. On suppose de plus
	\begin{enumerate}
		\item[$\bullet$] $[F_p : \Q_p] = 2d$ avec $d$ impair,
		\item[$\bullet$] $ \dim_{F_p} V = n$ impair et $\mu = (1, n-1), (0, n), \cdots, (0, n)$.
	\end{enumerate}
	
	Via la notion de structure de niveau, à $\mathcal{D}_{\Q_p}$ est associée une tour d'espaces rigides $(\mathcal{M}_{K_p})_{K_p}$ indexée par les sous-groupes compacts ouverts de $G(\Q_p)$. Le groupe $J_b(\Q_p)$ des quasi-isogénies du groupe $p$-divisible avec structures additionnelles est, dans le cas où $b$ est basique, une forme intérieure du groupe unitaire quasi-déployé $p$-adique $G$. La tour $(\mathcal{M}_{K_p})_{K_p}$ est alors munie d'une action de $J_b(\Q_p) \times G(\Q_p)$ et pour $\ell \neq p$ 
	
	\[
	H^i_c (\mathcal{M}, \overline{\Q}_{\ell}) := \mathop{\mathrm{lim}}_{\overrightarrow{K_p}} H^i_c (\mathcal{M}_{K_p}, \overline{\Q}_{\ell})
	\]
	est une $\overline{\Q}_{\ell}$-représentation de $ G(\Q_p) \times J_b(\Q_p) \times W_{E_p} $, où $W_{E_p}$ est le groupe de Weil du corps de définition $E_p$ de $\mu\footnote{Le corps de définition du cocaractère $\mu = (1, n-1), (0, n), \cdots, (0, n)$ est $F_p$.}$. Pour l'expliciter, on utilise les paramètres de Langlands discrets $ \varphi : W_{\Q_p} \times SU(2) \longrightarrow \ \prescript{L}{}{G} $.
	
	Pour un tel $\varphi$ on note $\Pi_{\varphi}(G(\Q_p))$ (respectivement $\Pi_{\varphi}(J_b(\Q_p))$) le paquet de représentations de $G(\Q_p)$ (respectivement de $J_b(\Q_p)$) associé (cf \ref{itm: local} et \ref{itm : passage local}). Les éléments de $\Pi_{\varphi}(G(\Q_p))$ sont en bijection avec l'ensemble $ \text{Irr} (S^{\natural}_{\varphi}, \chi) $ des caractères de $S^{\natural}_{\varphi}$ dont le tiré en arrière via $Z(\widehat{G}^{Gal(\overline{\Q}_p / E_p)}) \hookrightarrow S_{\varphi} \twoheadrightarrow S^{\natural}_{\varphi}$ induit le caractère $ \chi $ (où $S_{\varphi}$ est le centralisateur de $\varphi$ dans $\widehat{G}$ et $\chi$ est le caractère de $Z(\widehat{G}^{Gal(\overline{\Q}_p / E_p)})$ associé à $G(\Q_p)$). Contrairement au cas EL, ces paquets ne sont pas en général des singletons ce qui sera à la source des difficultés techniques nouvelles de cet article. Rappelons que $\varphi$ est dit cuspidal s'il est trivial sur le facteur $SU(2)$  auquel cas le paquet $\Pi_{\varphi}(G(\Q_p))$ ne contient que des représentations supercuspidales (cf. \ref{itm : C.Moeglin}).
	
	D'après \cite{Lang79}, à $\mu$ est associé une représentation $ r_{\mu} $ de $\widehat{G} \rtimes W_{E_p} $ et on note $r_{\mu} \circ \varphi_{E_p} $ la représentation de $ S_{\varphi} \times W_{E_p} $ définie par la formule 
	\[
	(s, w) \in S_{\varphi} \times W_{E_p} \longmapsto r_{\mu}(s \cdot \varphi(w))
	\] 
	\newtheorem*{Conjec}{Conjecture}
	\begin{Conjec}(\textbf{Kottwitz}) 
		Fixons un $L$-paramètre $\varphi$ cuspidal. Soit $ \pi_p' \otimes \pi_p \otimes \sigma $ une représentation irréductible de $G(\Q_p) \times J_b(\Q_p) \times W_{E_p}$ qui contribue de manière non triviale dans $H^{*}_c(\mathcal{M}, \overline{\Q}_{\ell})$. Alors $\pi_p'$ appartient au $L$-paquet $\Pi_{\varphi} (G(\Q_p)) $ si et seulement si $\pi_p$ appartient au $L$-paquet $\Pi_{\varphi} (J_b(\Q_p)) $.
		
		De plus la contribution (au signe près) du $L$-paquet associé à $\varphi$ est donnée par la formule suivante :
		\[
		\sum_{(\pi_p', \pi_p) \in \Pi_{\varphi} (G(\Q_p)) \times \Pi_{\varphi} (J_b(\Q_p)) } \pi_p' \otimes {\pi}^{\vee}_p \otimes \hom_{S_{\varphi}}( \tau_{\pi_p'} \otimes \tau_{\pi_p} ,r_{\mu} \circ \varphi_{E_p} )
		\]
		où $ \tau_{\pi_p'} $ et $ \tau_{\pi_p} $ sont respectivement des représentations de $S_{\varphi}$ correspondant à $\pi_p'$ et $\pi_p$  et où ${\pi}^{\vee}_p$ signifie la représentation contragrédiente.
		
	\end{Conjec}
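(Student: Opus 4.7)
Le plan est d'adopter une approche globale. Je commencerais par globaliser la donn\'ee de Rapoport-Zink $\mathcal{D}_{\Q_p}$ en une donn\'ee de Shimura PEL unitaire $\mathcal{D}$ sur $\Q$, associ\'ee \`a un groupe unitaire $G$ quasi-d\'eploy\'e hors de $p$ et dont la donn\'ee locale en $p$ s'identifie \`a $\mathcal{D}_{\Q_p}$. La vari\'et\'e de Shimura $Sh_K$ associ\'ee est alors de dimension $n-1$, propre sur le corps reflex pourvu que l'on choisisse la signature aux places archim\'ediennes pour que le groupe soit compact modulo le centre en toutes les places r\'eelles sauf une, et son lieu basique s'uniformise $p$-adiquement par un quotient de $\mathcal{M}$ via le th\'eor\`eme d'uniformisation de Rapoport-Zink.

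Deuxi\`emement, je d\'ecomposerais $H^*_c(Sh_K, \overline{\Q}_\ell)$ en termes de repr\'esentations automorphes de $G$ gr\^ace au programme de Langlands-Kottwitz stabilis\'e, en invoquant les r\'esultats de Mok et Kaletha-Minguez-Shin-White sur la correspondance de Langlands locale pour les groupes unitaires (formule de multiplicit\'e d'Arthur dans le spectre discret, $L$-paquets locaux, transfert endoscopique). Pour un param\`etre $\varphi$ cuspidal fix\'e en $p$, seules les repr\'esentations automorphes dont la composante locale en $p$ appartient \`a $\Pi_{\varphi}(G(\Q_p))$ contribuent, et la formule d'Arthur donne leur multiplicit\'e en termes des caract\`eres de $S^{\natural}_{\varphi}$.

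Troisi\`emement, en appliquant l'uniformisation $p$-adique \`a cette m\^eme cohomologie, la contribution du lieu basique s'exprime comme un produit spectral faisant intervenir $H^*_c(\mathcal{M}, \overline{\Q}_\ell)$ et l'espace des formes automorphes sur $J_b$. La comparaison de cette expression avec la d\'ecomposition spectrale pr\'ec\'edente, pour des donn\'ees hors $p$ suffisamment g\'en\'eriques (ce qui assure la s\'eparation d'une composante sur $G(\mathbb{A}^p_f)$), permet d'isoler $\pi_p' \otimes \pi_p$ et d'en d\'eduire la formule voulue ; la contribution galoisienne $r_{\mu} \circ \varphi_{E_p}$ provient de la compatibilit\'e local-global pour les repr\'esentations galoisiennes associ\'ees aux formes automorphes unitaires (\`a la Clozel, Kottwitz, Shin).

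Le principal obstacle sera de traiter le caract\`ere non singleton des $L$-paquets : contrairement au cas EL, plusieurs membres du paquet $\Pi_{\varphi}(G(\Q_p))$ peuvent contribuer et il faut d\'emontrer que l'on obtient toutes les combinaisons $(\pi_p', \pi_p) \in \Pi_{\varphi}(G(\Q_p)) \times \Pi_{\varphi}(J_b(\Q_p))$ avec la multiplicit\'e pr\'edite. Ceci exigera un contr\^ole fin des contributions endoscopiques parasites dans la formule des traces stabilis\'ee, provenant de groupes endoscopiques du type produit d'unitaires de rang plus petit. Les hypoth\`eses $d$ impair et $n$ impair interviendront ici de mani\`ere essentielle, en limitant l'endoscopie non triviale et en assurant la trivialit\'e de certaines obstructions li\'ees au centre. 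Enfin la signature $(1, n-1)$ impose aux repr\'esentations automorphes contribuant \`a la cohomologie d'\^etre cohomologiques avec la bonne composante archim\'edienne, ce qui fixera $\tau_{\pi_p'} \otimes \tau_{\pi_p}$ \`a la bonne repr\'esentation de $S_{\varphi}$.
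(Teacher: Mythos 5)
Your proposal correctly captures the broad global strategy — globalize the RZ datum, uniformize the basic locus, compare with the automorphic decomposition of the cohomology — and this is indeed the skeleton of the paper's proof. But there is a genuine gap that your sketch does not close, and identifying it is instructive.

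You propose working with a \emph{compact} Shimura variety of Kottwitz--Harris--Taylor type (signature $(1,n-1),(0,n),\dots,(0,n)$ with $F^+$ of degree $>1$, so that the group is anisotropic modulo center at all but one real place). This is precisely what the paper does in the appendix, and that approach only yields the \emph{summed} identity
\[
\sum_{\pi_p \in \Pi_{\varphi}(J_b(\Q_p))} \sigma_{\pi_p, \pi_p'} = \bigl(r_{\mu} \circ \varphi_{E_p}\bigr) \otimes |\cdot|^{-\frac{n-1}{2}},
\]
not the full Kottwitz formula. The difficulty you name at the end — isolating individual pairs $(\pi_p', \pi_p)$ with their predicted multiplicity — is exactly where the compact approach stalls, and your appeal to a ``contr\^ole fin des contributions endoscopiques parasites'' is not a mechanism. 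In the compact picture the basic locus uniformization together with the KMSW/Mok classification tells you that the entire sum over the local packet matches $r_{\mu} \circ \varphi_{E_p}$, but there is no lever to break that sum apart: every pair $(\pi_p', \pi_p)$ in $\Pi_{\varphi}(G(\Q_p)) \times \Pi_{\varphi}(J_b(\Q_p))$ arises from some automorphic representation, and the multiplicity formula alone does not tell you which Galois summand each $\sigma_{\pi_p,\pi_p'}$ picks up.

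What the paper actually does to break the sum is to switch to a \emph{non-compact} Shimura variety with $F^+ = \Q$ (hence $d=1$) of signature $(1,n-1)$, and to exploit Morel's theorem~7.2.2 on the isotypic components of the \emph{intersection} cohomology. That theorem expresses $\mathrm{Tr}(\mathrm{Fr}^m_{\mathcal P}, W(\pi_f))$ as the sum of a ``main'' term $r_{\mu_G} \circ \varphi_{\pi_p}$ plus explicit correction terms indexed by elliptic endoscopic data $\underline e$ and by cocharacters $\mu \in M_{\underline e}$. The paper then globalizes with carefully chosen ``two-part'' generic parameters $\varPsi = \varPsi^{n_1}_1 \boxplus \varPsi^{n_2}_2$ so that only one nontrivial $\underline e$ and two $\mu$'s contribute, reducing to two unknown coefficients which are pinned down by a dimension constraint (Corollary~3.9 in the paper); and the dimension constraint itself is obtained by a combinatorial count of characters of $S^{\natural}_{\varphi_\infty} \cong (\Z/2\Z)^n$ at the archimedean place, using the Arthur multiplicity formula and the regularity of the coefficient system $\xi$. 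None of this has an analogue in the compact KHT picture, because there are no boundary strata contributing to intersection cohomology. So the missing idea in your sketch is the passage to the non-compact variety and the input of Morel's theorem; without it, you prove only the averaged version of the conjecture.
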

	
	\newtheorem*{remarqu}{Remarque}
	\begin{remarqu}
		Puisque $n$ est impair on a $G(\Q_p) = \J_b(\Q_p)$.
	\end{remarqu}
	Pour $\pi_p$ une représentation irréductible supercuspidale de $J_b(\Q_p)$, on écrit
	\begin{equation*} 
		\mathop{\mathrm{lim}}_{\overrightarrow{K_p}}  \hom_{\J_b(\Q_p)} \left( H^{i}_c(\mathcal{M}_{K_p}, \overline{\Q}_{\ell}(n-1)), \pi_p \right)_{cusp}   = \sum_{\pi_p'} \pi_p' \otimes \sigma^i_{\pi_p, \pi_p'}	
	\end{equation*} 
	où $\pi_p'$ parcourt l'ensemble des classes d'équivalences de représentations supercuspidales de $G(\Q_p)$. On note également $ \sigma_{\pi_p, \pi_p'} = \sigma^{n-1}_{\pi_p, \pi_p'} $.
	
	Considérons un paramètre de Langlands cuspidal $ \varphi : W_{\Q_p}  \longrightarrow \ \prescript{L}{}{G} $. La restriction de $\varphi$ sur $W_{E_p}$ se décompose en une somme de représentations irréductibles : $ \varphi_{E_p} = \varphi^{n_1}_1 \oplus \cdots \oplus \varphi^{n_r}_r $. On a donc $S_{\varphi} = S_{\varphi}^{\natural} = (\Z / 2\Z)^r$ (cf. \ref{itm : centra}). Parmi les $2^{r-1}$ éléments de $\text{Irr} (S^{\natural}_{\varphi}, \chi)$, il y a $r$ éléments $\{ \tau_1, \cdots, \tau_r \}$ (cf. \ref{itm : rep}) qui jouent un rôle privilégié. Le théorème suivant est alors une version explicite de la conjecture de Kottwitz dans le cas considéré. 
	
	\newtheorem*{théorème*}{Théorème A}
	\begin{théorème*} \phantomsection \label{itm : principal}
		\mylabel{text : thm}{Théorème A}
		Pour $[F_p : \Q_p] = 2$, $n$ impair et $\mu = (1, n-1)$, soit $\varphi$ un paramètre de Langlands cuspidal et $\pi_p' \in \Pi_{\varphi}(G(\Q_p))$\footnote{On rappelle que $\varphi$ étant cuspidal alors $\pi_p'$ est nécessairement supercuspidale}. Alors pour $\pi_p$ supercuspidale, la représentation $\sigma^i_{\pi_p, \pi_p'}$ est nulle dans chacun des cas suivants
		\begin{enumerate}
			\item[i)]  $ i \neq n-1 $,
			\item[ii)] $\pi_p \notin \Pi_{\varphi}(\J_b(\Q_p))$%\footnote{On rappelle que $n$ étant impair, on a $G(\Q_p) = \J_b(\Q_p)$},
			\item[iii)] Pour $\pi_p \in \Pi_{\varphi}(\J_b(\Q_p)) = \Pi_{\varphi}(G(\Q_p))$ (puisque $G(\Q_p) = \J_b(\Q_p)$) et $\tau_{\pi_p} \cdot \tau_{\pi_p'} \notin\{ \tau_1, \cdots, \tau_r \} $. 
		\end{enumerate}
		De plus pour $\pi_p \in \Pi_{\varphi}(\J_b(\Q_p))$ et $\tau_{\pi_p} \cdot \tau_{\pi_p'} = \tau_i$, on a
		\[
		\sigma_{\pi_p, \pi_p'} = \left( r_{\mu_i} \circ \varphi^{n_i}_{i} \right) \otimes |\cdot|^{ - \frac{n-1}{2}}.
		\]
		
		où $\mu_i = (1, \dim \varphi^{n_i}_i - 1)$ pour $ 1 \leq i \leq r $.
	\end{théorème*}
	
	\newtheorem*{Remarque*}{Remarque}
	\begin{Remarque*}
		\begin{enumerate}
			\item[$\bullet$] Les représentations du groupe de Weil ci-dessus sont Frobenius semi-simples. 
			\item[$\bullet$] Pour chaque $1 \leq i \leq r$, il y a $2^{r-1}$ couples $(\pi_p, \pi_p') \in \Pi_{\varphi}(\J_b(\Q_p)) \times \Pi_{\varphi}(G(\Q_p)) $ de sorte que $\tau_{\pi_p} \cdot \tau_{\pi_p'} = \tau_i$ pour lesquels on a $ \sigma_{\pi_p, \pi_p'} = \left( r_{\mu_i} \circ \varphi^{n_i}_{i} \right) \otimes |\cdot|^{ - \frac{n-1}{2}} $.
			\item[$\bullet$] Il y a $r2^{r-1}$ couples $(\pi_p, \pi_p')$ parmi $(2^{r-1})^2$ couples pour lesquels $\sigma_{\pi_p, \pi_p'}  \neq 0$.  
		\end{enumerate}	
	\end{Remarque*}
	Le principe de la démonstration repose sur l'étude des variétés de Shimura $\Sh_{ / E}$ définies sur leur corps reflex $E = F$ où $F$ est quadratique imaginaire et plus particulièrement sur la géométrie de la fibre spéciale en une place $p$ inerte dans $E$ d'un modèle $\Sh_{/ \mathcal{O}_p}$ où $\mathcal{O}_p$ est l'anneau des entiers de $E_p$. 
	
	Dans un premier temps, on considère une variété de Shimura \textit{non compacte} $\Sh$ associée à un groupe algébrique $\overset{\bullet}{G}$ unitaire quasi-déployé en toutes les places finies avec $\overset{\bullet}{G}(\Q_p) = G(\Q_p)$ et où la signature associée est de la forme $(1, n-1)$. On note aussi $I$ la forme intérieure de $\overset{\bullet}{G}$ qui est quasi-déployé en toutes les places finies et où la signature associée est $(0, n)$. D'après \cite{Far04}, on a une uniformisation du lieu basique de $\Sh$ par des $\mathcal{M}_K$ avec une suite spectrale :
	$$ E_2^{tq} = | \ker^1 (\Q, \overset{\bullet}{G}) | \sum_{\substack{\Pi \in \mathcal{A}(I) \\ \Pi_{\infty} = \breve{\rho}}} \left( \mathop{\mathrm{lim}}_{\overrightarrow{K_p}} \Ext^t_{\J_{b}(\Q_p)} \left( H^q_c(\mathcal{M}_{K_p}, \overline{\Q}_{\ell}(n-1)), \Pi_p \right) \right) \otimes (\Pi^p) $$
	dont l'aboutissement est 
	$$ \mathop{\mathrm{lim}}_{\overrightarrow{K}} H^{t+q}((\Sh)_K^{an}(basic), \mathcal{L}_{\rho}^{an}). $$
	
	Lorsqu'on considère la partie $G(\Q_p)$-supercuspidale, d'après \cite{Man08}, \cite{Shen}, \cite{LS} on a :
	$$ H^{t+q}((\Sh)(basic), \mathcal{L}_{\rho})_{p-cusp} = H^{t+q}((\Sh), \mathcal{L}_{\rho})_{p-cusp}. $$
	
	En outre lorsque $\Pi_p$ est $J_b(\Q_p)$-supercuspidale, on a : 
	$$ \Ext^i_{\J_{b}(\Q_p)} \left( H^q_c(\mathcal{M}_{K_p}, \overline{\Q}_{\ell}(n-1)), \Pi_p \right) = 0 $$
	dès que $i > 0$.
	
	La suite spectrale ci-dessus dégénère donc
	\[
	| \ker^1 (\Q, \overset{\bullet}{G}) | \sum_{\substack{\Pi \in \mathcal{A}(I) \\ \Pi_{\infty} = \breve{\rho}}} \left( \mathop{\mathrm{lim}}_{\overrightarrow{K}} \hom_{\J_{b}(\Q_p)} \left( H^q_c(\mathcal{M}_{K_p}, \overline{\Q}_{\ell}(n-1)), \Pi_p \right)_{cusp}  \right) \otimes (\Pi^{\infty, p}) = \left( H^{q}(\Sh, \mathcal{L}_{\rho}) \right)_{p-cusp}.
	\]
	
	Par un argument de globalisation, on en déduit que 
	\begin{equation} \label{eqn:*}
		\sum_{\pi_p \in \Pi_{\varphi}(J_b(\Q_p))} \sigma_{\pi_p, \pi_p'} = \left( r_{\mu} \circ \varphi_{E_p} \right)  \otimes | \cdot |^{-\frac{n-1}{2}} = \sum_{i = 1}^r \left( r_{\mu_i} \circ \varphi^{n_i}_{i} \right)  \otimes | \cdot |^{-\frac{n-1}{2}}. \tag{$\star$}
	\end{equation}
	
	%En utilisant la formule de multiplicité pour les groupes unitaires \cite{KMSW}, on peut construire une $\Pi_0 \in \mathcal{A}(I^*) $ avec $(\Pi_0)_{\infty} = \breve{\rho}$ de sorte que $\{ \Pi \in \mathcal{A}(I^*) | \Pi^{\infty, p} \simeq \Pi_{0}^{\infty, p}, \Pi_{\infty} \simeq \breve{\rho} \}$ = $\{ \Pi_0 \}$. On démontre alors aisément les trois premières propriétés. 
	
	Dans le but d'identifier chacun des termes de la somme à gauche de \eqref{eqn:*} avec l'un des termes dans la somme à droite, on devra considérer des formes automorphes provenant d'un groupe endoscopique de $\overset{\bullet}{G}$. Afin de mener à bien cette stratégie, on utilise la formule de multiplicité pour les groupes unitaires \cite{KMSW} pour construire des formes automorphes satisfaisant des propriétés particulières ainsi que les résultats de \cite{Mo} pour obtenir des informations plus fines sur la cohomologie de variétés de Shimura.
	\begin{Remarque*}
		Un ingrédient important de la démonstration est le théorème $7.2.2$ de \cite{Mo} sur la composante isotypique dans la cohomologie d'intersection de variétés de Shimura de type PEL unitaires non compactes sur un corps CM $F$. Dans cet article l'auteur a supposé $F^{+} = \Q$ et c'est la raison pour laquelle on suppose $d = 1$. Toutefois si on disposait d'une formule analogue à \cite{Mo} 7.2.2, notre méthode permettrait d'obtenir le théorème précédent pour $d$ quelconque.
		
		Il est en revanche possible, en utilisant les variétés de type Kottwitz-Harris-Taylor, de prouver la formule \eqref{eqn:*}, cf. appendice \ref{itm : appendice}, théorème \ref{itm : thm partiel}. 
	\end{Remarque*} 
	\newtheorem*{remerciements*}{Remerciements}
	\begin{remerciements*}
		Je remercie profondément Pascal Boyer et Laurent Fargues tant pour leur aide mathématique déterminante que pour leurs constants encouragements. Je remercie chaleureusement Sophie Morel, Colette Moeglin, Tasho Kaletha et Sug Woo Shin de m'avoir expliqué leurs travaux.	
	\end{remerciements*}
	\section{Données géométriques}
	\subsection{Espaces de Rapoport-Zink d'après \cite{RZ96}}
	Fixons un nombre premier $p$. Soit $ \breve{\Q}_p := \widehat{\Q_p^{\text{nr}}} = \text{Frac}W(\overline{\F}_p)$ le complété de l'extension maximale non ramifiée de $\Q_p$ et $\sigma$ l'automorphisme de Frobenius géométrique de $\breve{\Q}_p / \Q_p$.
	\begin{definition}
		Étant donné un groupe réductif $G$ défini sur $\Q_p$, deux éléments $b_1$, $b_2$ sont dits $\sigma$-conjugués s'il existe $g \in G(\breve{\Q}_p)$ tel que $ b_1 = gb_2g^{-\sigma} $. On note $B(G)$ l'ensemble des classes de $\sigma$-conjugaisons dans $G(\breve{\Q}_p)$.
	\end{definition}
	\begin{remarque}
		D'après Kottwitz \cite{Kot97} section 6.2, on s'intéressera dans la suite à un sous-ensemble $B(G, \mu)$ de $B(G)$ associé à un cocaractère minuscule $ \mu : \G_{m/\overline{\Q}_p} \longrightarrow G_{\overline{\Q}_p}$ (un cocaractère qui ne possède que des poids $0$ et $1$). Il existe un ordre partiel sur $B(G, \mu)$.
	\end{remarque}

	\begin{definition} \label{itm: defPEL} Une donnée de Rapoport-Zink de type PEL unitaire non ramifiée simple $ (F_p, *, V, \langle \cdot | \cdot \rangle, GU, \mu, b)$ consiste en la donnée:
		\begin{enumerate}
			\item[-] d'une extension $F_p$ de degré $2d$ de $\Q_p$ non ramifiée munie d'une involution non triviale $*$,
			\item[-] d'un $F_p$-espace vectoriel de dimension finie $V$,
			\item[-] d'un produit hermitien symplectique $ \langle \cdot | \cdot \rangle : V \times V \longrightarrow \Q_p $ pour lequel il existe un réseau auto-dual $\Lambda$,  
			\item[-] d'une classe de conjugaison de cocaractères minuscules $\mu : \G_{m / \overline{\Q}_p} \longrightarrow GU_{\overline{\Q}_p} $ où $GU$ est le groupe des similitudes unitaires défini sur $\Q$ par
			\[
			GU(R) = \big\{ g \in GL(V \otimes R) | <gv, gw> = \nu (g) <v,w>, \ v,w \in V \otimes R \big\}
			\] 
			pour toute $\Q$-algèbre $R$ et $\nu(g) \in R^{\times}$.
			\item[-] d'une classe de $\sigma$-conjugaison $b \in B(GU, \mu) $. On suppose de plus que $ c \circ \mu(z) = z $ où $c$ est le facteur de similitude. Avec ces hypothèses, un tel $\mu$ est déterminé par des couples $(p_{\tau}, q_{\tau})_{\tau \in \Phi \amalg \Phi^{*}}$ où $(p_{\tau}, q_{\tau}) = (q_{\tau*}, p_{\tau*})$ et où $\Phi$ est un type CM $p$-adique de $F_p / \Q_p$.		
		\end{enumerate}
	\end{definition}
	%\begin{remarque}
	%	On pose $ F_p^+ $ le sous-corps de $F_p$ fixé par l'involution $*$.
	%\end{remarque}
	A une telle donnée, on associe  l'isocristal $ N = \Big( V \otimes_{\Q_p} \breve{\Q}_p, b \circ ( Id \otimes \sigma)\Big) $ muni d'une action $ \iota: \mathcal{O}_{F_p} \longrightarrow End(N) $  et une forme alternée non dégénérée $\langle \cdot | \cdot \rangle : N \times N \longrightarrow \breve{\Q}_p(n) $ où $n = val_p(c(b))$. Par la théorie de Dieudonné, l'isocristal $N$ correspond à un groupe $p$-divisible $(\mathbb{X}, \iota, \lambda)$ défini sur $\overline{\F}_p$ muni d'une action de $\mathcal{O}_{F_p}$ et d'une polarisation $\lambda$.
	
	\begin{théorème} (\cite{RZ96}, theo. 3.25)
		Soit $\mathcal{M}$ le foncteur qui associe à chaque $\mathcal{O}_{\breve{\Q}_p}$-schéma $S$ sur lequel $p$ est localement nilpotent l'ensemble des couples $(X, \rho)$ où :
		\begin{enumerate}
			\item [-] $X$ est un groupe $p$-divisible sur $S$ muni d'une polarisation $p$-principale $\lambda_X$ et d'une action $\iota_X$ telles que l'involution de Rosati induite par $\lambda_X$ induit $*$ sur $\mathcal{O}_{F_p}$. 
			\item [-] Une quasi-isogénie $\mathcal{O}_{E_p}$-linéaire $ \rho : X \times_S \overline{S} \longrightarrow \mathbb{X} \times_{Spec(\overline{\F}_p)} \overline{S} $ telle que $\rho^{V} \circ \lambda_X \circ \rho $ est un $\Q_p$-multiple de $\lambda_X$ dans $\mathop{\mathrm{Hom}}\nolimits_{\mathcal{O}_{E_p}}(X, X^{V})\otimes_{\Z}\Q$. (Ici, $\overline{S}$ est la réduction modulo $p$ de $S$).	
		\end{enumerate}
		
		On demande également que $(X, \iota_X)$ satisfasse la condition de déterminant de Kottwitz. Plus précisément, sous l'action de $F_p$, on a une décomposition: $ \Lie(X) = \bigoplus_{\tau} \Lie(X)_{\tau} $ alors $\Lie(X)_{\tau}$ est localement libre de rang $p_{\tau}$. Ce foncteur est alors représenté par un schéma formel $\mathcal{M}(\mu, b)$ défini sur $\Spf (\mathcal{O}_{\breve{\Q}_p})$. 
	\end{théorème}
	\begin{remarque}
		Dans \cite{RZ96}, les auteurs considèrent également les espaces de Rapoport-Zink de type EL. Les lecteurs intéressés pourront consulter loc. cit. pour plus de détails.
	\end{remarque}
	%\begin{remarque}
	%	Le schéma formel $\mathcal{M}(b, \mu)$ (de type EL ou PEL) ne dépend que de la donnée de Rapoport-Zink.
	%\end{remarque}
	\begin{notation}
		On pose $ C_0 = \{ g \in G(\Q_p) \ \vert \ g\Lambda = \Lambda \} $, un sous-groupe compact maximal de $G(\Q_p)$.
	\end{notation}
	
	Afin d'introduire les structures de niveau usuelles comme dans le cas $GL(2)$ on travaille avec les espaces analytiques $\mathcal{M}^{\text{an}}$ de $\mathcal{M}$ sur $\breve{\Q}_p$.
	
	\begin{definition}
		Soit $\mathcal{T} / \mathcal{M}^{\text{an}} $ le système local défini par le module de Tate $p$-adique du groupe $p$-divisible universel sur $\mathcal{M}$. Pour $K \subset C_0$ on définit $\mathcal{M}_K$ comme le revêtement étale de $\mathcal{M}^{\text{an}}$ qui classifie les $\mathcal{O}_{F_p}$ trivialisations modulo $K$ de $\mathcal{T}$ par $\Lambda$. Dans le cas $PEL$ on demande de plus que les trivialisations préservent la forme alternée à $\Q_p^{\times}$ près. 
	\end{definition}
	
	On a $\mathcal{M}^{an} = \mathcal{M}_{C_0}$. D'autre part il y a une tour $(\mathcal{M}_{K_p})_{K_p}$ d'espaces analytiques sur $\breve{\Q}_p$ munis de morphismes de transition étales finis pour $K_p^{'} \subset K_p $ :
	\[
	\Phi_{K_p^{'}, K_p} : \ \mathcal{M}_{K^{'}_p} \ \longrightarrow \ \mathcal{M}_{K_p} 
	\]
	d'oubli de la structure de niveau. Le morphisme $\Phi_{K_p^{'}, K_p}$ est galoisien de groupe de Galois $ K_p / K_p^{'} $ si $K_p^{'}$ est normal dans $K_p$.
	
	\begin{proposition} (\cite{Far04} p.16)
		La dimension $d_{K_p}$ de $\mathcal{M}_{K_p}$ est donnée par la formule $ d_{K_p} = \frac{1}{2}\sum_{\tau \in I_F} p_{\tau}q_{\tau} $.
		%	$$ d_{K_p} = \left\lbrace \begin{array}{cc}
		%	\sum_{\tau \in I_F} p_{\tau}q_{\tau} & \mathop{\mathrm{cas \ EL}}\nolimits \\
		%	\frac{1}{2}\sum_{\tau \in I_F} p_{\tau}q_{\tau} & \mathop{\mathrm{cas \ PEL \ unitaire.}}\nolimits
		%	\end{array} \right. $$
	\end{proposition}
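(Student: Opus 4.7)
Comme les morphismes $\Phi_{K_p', K_p}$ sont étales finis, toutes les couches de la tour $(\mathcal{M}_{K_p})_{K_p}$ ont la même dimension, et celle-ci coïncide avec la dimension de la fibre générique rigide $\mathcal{M}^{\mathrm{rig}}$, qui à son tour est égale à la dimension relative du schéma formel $\mathcal{M}$ sur $\Spf(\mathcal{O}_{\breve{\Q}_p})$. On est donc ramené à calculer cette dernière.

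Le premier pas est de rappeler que $\mathcal{M}$ est formellement lisse sur $\Spf(\mathcal{O}_{\breve{\Q}_p})$ (c'est la proposition $3.33$ de \cite{RZ96}), de sorte qu'il suffit de calculer la dimension de l'espace tangent en un point géométrique correspondant à un $p$-divisible $(X, \iota_X, \lambda_X)$. Par la théorie de Grothendieck-Messing, l'espace tangent au foncteur de déformations de $X$ (sans structures additionnelles) s'identifie à $\hom_k(\omega_X, \Lie(X))$, où $\omega_X$ désigne le module des différentielles invariantes et $k$ le corps résiduel.

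Le second pas est d'incorporer l'action de $\mathcal{O}_{F_p}$ et la polarisation. L'action de $\mathcal{O}_{F_p}$ induit une décomposition $\Lie(X) = \bigoplus_{\tau \in \Phi \amalg \Phi^*} \Lie(X)_{\tau}$ avec $\dim \Lie(X)_{\tau} = p_{\tau}$, et dualement $\dim \omega_{X, \tau} = q_{\tau}$ (car $p_{\tau} + q_{\tau} = n$). L'espace tangent avec action de $\mathcal{O}_{F_p}$ se décompose alors en
\[
\bigoplus_{\tau \in \Phi \amalg \Phi^*} \hom_k(\omega_{X, \tau}, \Lie(X)_{\tau}),
\]
de dimension totale $\sum_{\tau \in \Phi \amalg \Phi^*} p_{\tau} q_{\tau}$. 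La polarisation $\lambda_X$ identifie, à quasi-isogénie près, $X$ à son dual de Cartier de manière compatible à $\iota_X$ et à l'involution $*$ ; elle impose que l'on ne retienne que la partie $*$-hermitienne des déformations. Concrètement, elle identifie la composante indexée par $\tau$ à celle indexée par $\tau^*$, moyennant $(p_{\tau}, q_{\tau}) = (q_{\tau^*}, p_{\tau^*})$, ce qui divise le compte précédent par deux.

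Le point délicat me semble être la prise en compte précise de la condition de polarisation : il faut vérifier que la contrainte d'Hermiticité ne coupe pas seulement les couples $\{\tau, \tau^*\}$ conjugués en deux, mais bien toute la somme de manière symétrique, de sorte que l'on obtienne $\frac{1}{2} \sum_{\tau \in \Phi \amalg \Phi^*} p_{\tau} q_{\tau} = \frac{1}{2} \sum_{\tau \in I_F} p_{\tau} q_{\tau}$ (où $I_F = \Phi \amalg \Phi^*$). Ceci se fait en examinant, sur chaque paire $\{\tau, \tau^*\}$, l'accouplement $\lambda_X$-induit qui identifie $\hom(\omega_{X,\tau}, \Lie(X)_{\tau})$ au dual de $\hom(\omega_{X,\tau^*}, \Lie(X)_{\tau^*})$ ; le sous-espace invariant est alors de dimension $p_{\tau} q_{\tau} = p_{\tau^*} q_{\tau^*}$, d'où la formule annoncée. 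Voir \cite{RZ96}, chapitre $3$, pour les détails.
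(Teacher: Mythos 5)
Le papier ne donne aucune d\'emonstration de cette proposition : elle est \'enonc\'ee comme un fait standard renvoyant implicitement \`a \cite{RZ96}, et votre esquisse reproduit pr\'ecis\'ement l'argument attendu (r\'eduction au niveau maximal par \'etalit\'e des morphismes de transition, lissit\'e formelle, calcul de l'espace tangent par Grothendieck--Messing composante par composante sous l'action de $\mathcal{O}_{F_p}$, puis division par deux via la polarisation qui apparie les composantes $\tau$ et $\tau^*$). L'argument est donc correct dans sa substance ; la seule impr\'ecision est notationnelle : l'espace tangent aux d\'eformations est $\hom_k(\omega_{X}, \Lie(X^{\vee}))$ (ou $\hom_k(\omega_{X^{\vee}}, \Lie(X))$ en convention covariante) et non $\hom_k(\omega_X, \Lie(X))$, puisque $\omega_{X, \tau}$ est de dimension $p_{\tau}$ et non $q_{\tau}$ ; c'est la composante $\tau$ de $\Lie(X^{\vee})$ (resp. de $\omega_{X^{\vee}}$) qui est de dimension $q_{\tau}$, ce qui redonne bien le compte $p_{\tau}q_{\tau}$ par plongement.
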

	
	\begin{definition}
		Soit $J(\Q_p)$ le groupe des quasi-isogénies $\mathcal{O}_{F_p}$-linéaires $g$ de $\X$ telles que $\lambda \circ g$ est une $\Q^{\times}$-multiple de $g^{\vee} \circ \lambda $. Le groupe $J(\Q_p)$ agit à gauche sur $\mathcal{M}$ (dans le cas EL et PEL) par la formule
		\[
		\forall g \in J(\Q_p) \ \forall (X, \rho) \in \ \mathcal{M} \quad (X, \rho) \cdot g = (X, \rho \circ g^{-1}). 
		\]
	\end{definition}
	
	\begin{definition} \label{itm: basique}
		Une donnée de Rapoport-Zink non ramifiée simple $ (F_p, *, V, \langle \cdot | \cdot \rangle, G, \mu, b)$ est basique si le groupe $J(\Q_p)$ associé est une forme intérieure de $G$. La donnée ci-dessus est basique si et seulement si $b$ est l'élément minimal dans $B(G, \mu)$. Dans ce cas, on dit également que $b$ est basique.
	\end{definition}
	%\begin{proposition}
	%Une donnée de Rapoport-Zink non ramifiée simple $ (F_p, *, V, \langle \cdot | \cdot \rangle, G, \mu, b)$ ou $(F_p, V, G, \mu, b)$ est basique si et seulement si $b$ est l'élément minimal dans $B(G, \mu)$. Dans ce cas, on dit également que $b$ est basique.
	%\end{proposition}
	
	%\textbf{Donnée de descente de Rapoport-Zink}
	
	%\textbf{Cohomologie des variétés de Rapoport-Zink}
	
	Soit $ \ell \neq p$ un nombre premier.
	\begin{notation}
		Soit $K_p \subset C_0 $ un niveau. On pose:
		\[
		H_c^{\bullet}(\mathcal{M}_{K_p}, \Q_{\ell}) := \mathop{\mathrm{lim}}_{\overrightarrow{V}} \Big( \mathop{\mathrm{lim}}_{\overleftarrow{n}} H_c^{\bullet}(V \otimes_{\breve{\Q}_p} \C_p, \Z / \ell^n \Z ) \otimes \Q_{\ell} \Big)
		\] 
		où $V$ parcourt les ouverts relativement compacts de $\mathcal{M}_{K_p}$.
	\end{notation}
	
	Le groupe $\J_b(\Q_p)$ agit sur $\mathcal{M}_{C_0}$ et cette action s'étend à $\mathcal{M}_{K_p}$ de sorte que $\J_b(\Q_p)$ agit sur les $H_c^{\bullet}(\mathcal{M}_{K_p}, \Q_{\ell})$. On peut aussi définir une action du groupe de Weil $W_{E_p}$ sur ces groupes de cohomologie grâce à la donnée de descente de Rapoport-Zink définie comme suit. 
	
	Soit $\sigma_{E_p} : \breve{\Q}_p \xrightarrow{\sim} \breve{\Q}_p$ l'automorphisme de Frobenius relatif au corps de définition $E_p$ de $\mu$ (où $\breve{\Q}_p = \widehat{E_p^{nr}}$). On note $\overline{\sigma}_{E_p}$ le morphisme de Frobenius induit sur $\overline{\F}_p$. Pour $\mathbb{X}$ un groupe $p$-divisible défini sur $\overline{\F}_p$, on note $ F_{E_p} : \mathbb{X} \longrightarrow \overline{\sigma}_{E_p}^*\mathbb{X}$ le morphisme de Frobenius relatif. On construit un isomorphisme de foncteurs : $ \alpha : \mathcal{M} \longrightarrow \sigma_{E_p}^*\mathcal{M} $ comme suit.
	
	Pour $S$ un $\mathcal{O}_{\breve{\Q}_p}$-schéma sur lequel $p$ est nilpotent ainsi qu'un point $(X, \rho) \in \mathcal{M}(S)$, le point $(X^{\alpha}, \rho^{\alpha})$ associé dans $\sigma_{E_p}^*\mathcal{M}(S)$ est défini de la manière suivante:
	\begin{enumerate}
		\item[-] $ X^{\alpha} := X $ avec l'action de $\iota_{X^{\alpha}} := \iota_X$ (et avec la polarisation $\lambda_{X^{\alpha}} := \lambda_X$ dans le cas PEL)
		\item[-] $\rho^{\alpha} := \rho \circ F_{E_p}^{-1}.$
	\end{enumerate}
	
	L'isomorphisme de foncteurs $ \alpha : \mathcal{M} \longrightarrow \sigma_{E_p}^*\mathcal{M} $ est la donnée de descente de Rapoport-Zink associée à $\mathcal{M}$. Étant donné que la donnée de descente commute à l'action de $\J_b(\Q_p)$, les groupes $H_c^{\bullet}(\mathcal{M}_{K_p}, \Q_{\ell})$ sont munis d'une action de $\J_b(\Q_p) \times W_{E_p}$. De plus, lorsque $K_p$ varie, le système $(H_c^{\bullet}(\mathcal{M}_{K_p}, \Q_{\ell}))_{K_p}$ est muni d'une action de $ G(\Q_p) \times \J_b(\Q_p) \times W_{E_p} $.
	%ATTENTION : rajouter éléments historiques : LT, Drinfeld, Langlans + Jacquet Langlands (Harris - Taylor)
	\begin{proposition} (\cite{Man08} theorem 8) \phantomsection \label{itm: RZ}
		Soit $\rho$ une représentation $\ell$-adique admissible de $\J_b(\Q_p)$. 
		\begin{enumerate}
			\item[-] Les groupes $$H^{i,j}(\mathcal{M}^{\infty})_{\rho} := \mathop{\mathrm{lim}}_{\overrightarrow{K}} \Ext^j_{\J_b(\Q_p)}(H^i(\mathcal{M}_K, \Q_{\ell}(d_K)), \rho)$$ sont nuls pour presque tous $i,j \geq 0 $.
			\item[-] Les représentations $ H^{i,j}(\mathcal{M}^{\infty})_{\rho} $ sont admissibles.
		\end{enumerate}
	\end{proposition}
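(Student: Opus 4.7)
The plan is to reduce both assertions to finite-level statements and then combine a geometric finiteness result for $H^i_c(\mathcal{M}_{K_p},\Q_\ell)$ with the Bernstein-theoretic structure of smooth representations of $\J_b(\Q_p)$. Admissibility of the $G(\Q_p)$-representation $\mathop{\mathrm{lim}}_{\overrightarrow{K_p}} \Ext^j_{\J_b(\Q_p)}(H^i(\mathcal{M}_{K_p},\Q_\ell(d_{K_p})),\rho)$ amounts to finite-dimensionality of $K_p$-invariants at every level, so it is enough to prove, at fixed $K_p$, that the Ext group above is finite-dimensional and vanishes for almost all pairs $(i,j)$. The bound in $i$ is immediate: $\mathcal{M}_{K_p}$ is a smooth rigid-analytic variety of dimension $d_{K_p}$, hence its compactly supported $\ell$-adic cohomology is concentrated in degrees $[0,2d_{K_p}]$.

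The geometric core of the proof is the claim that $H^i_c(\mathcal{M}_{K_p},\Q_\ell)$ is a finitely generated admissible smooth $\J_b(\Q_p)$-representation. One establishes this by exhibiting a quasi-compact fundamental domain for the $\J_b(\Q_p)$-action. The standard route is Mantovan's product-formula decomposition, which expresses $\mathcal{M}_{K_p}$ (after restriction to the pre-image of a Newton stratum and up to compatible towers) in terms of a truncated Rapoport-Zink piece with quasi-compact reduced special fiber together with Igusa varieties, the action of $\J_b(\Q_p)$ translating such pieces among themselves. A spectral sequence argument built from a $\J_b(\Q_p)$-stable covering by translates of this fundamental domain then identifies $H^i_c(\mathcal{M}_{K_p},\Q_\ell)$ as a finitely generated smooth $\J_b(\Q_p)$-module; admissibility follows because, for any open compact $K_b\subset\J_b(\Q_p)$, the $K_b$-invariants are computed on a quasi-compact quotient and hence are finite-dimensional.

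For the Ext groups, one exploits that the category of smooth $\overline{\Q}_\ell$-representations of $\J_b(\Q_p)$ decomposes into Bernstein blocks, each of finite cohomological dimension bounded by the split rank of $\J_b$ modulo its center. A finitely generated admissible module has support in finitely many blocks, so $\Ext^j$ vanishes for $j$ beyond this universal bound, giving the second finiteness. Finite-dimensionality of each individual Ext is obtained by resolving $H^i_c(\mathcal{M}_{K_p},\Q_\ell)$ by finite direct sums of compactly induced modules $\text{c-Ind}_{K_b}^{\J_b(\Q_p)}\mathbf{1}$ for varying open compact $K_b\subset\J_b(\Q_p)$; the Ext then appears as the cohomology of a finite complex whose terms are spaces $\rho^{K_b}$, each finite-dimensional by admissibility of $\rho$. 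Passing to the limit over $K_p$ yields the smooth admissible $G(\Q_p)$-structure on $H^{i,j}(\mathcal{M}^\infty)_\rho$.

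The hard part is the geometric finiteness step. The space $\mathcal{M}_{K_p}$ is not quasi-compact, and the existence of a well-behaved fundamental domain with the right admissibility properties requires the full strength of the product-formula analysis of \cite{Man08}, together with the compatibilities needed to ensure finite generation (as opposed to a mere filtered colimit of such modules) after passing through the tower of level structures. Once this input is available, the remaining steps are formal Bernstein-theoretic manipulations, but isolating the correct fundamental domain and controlling the spectral sequence coming from its $\J_b(\Q_p)$-translates is the genuinely non-trivial ingredient.
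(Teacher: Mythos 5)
The paper gives no proof of this proposition: it is quoted verbatim as Theorem 8 of \cite{Man08} and used as a black box, so there is no internal argument to compare against. Judged on its own terms, your overall strategy --- a geometric finite-type statement for $H^i_c(\mathcal{M}_{K_p},\Q_\ell)$ as a $\J_b(\Q_p)$-module, followed by Bernstein-theoretic bookkeeping for the $\Ext$ groups --- is the right one and is essentially what underlies Mantovan's proof.

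There are, however, two concrete defects. First, the geometric finiteness is not a consequence of ``Mantovan's product-formula decomposition.'' The product formula computes the cohomology of Newton strata of the special fibre of the Shimura variety in terms of Igusa varieties and Rapoport--Zink spaces; it does not decompose $\mathcal{M}_{K_p}$ itself, and it enters this paper in a different role (comparing the basic stratum to the whole Shimura variety). The input you actually need --- that $H^i_c(\mathcal{M}_{K_p},\Q_\ell)$ is a smooth $\J_b(\Q_p)$-representation of finite type --- is a theorem of Fargues in \cite{Far04}, obtained by covering $\mathcal{M}_{K_p}$ by $\J_b(\Q_p)$-translates of a quasi-compact open and running a \v{C}ech-type spectral sequence; that is exactly the picture your ``fundamental domain'' language gestures at, just misattributed. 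Second, the sentence ``admissibility follows because, for any open compact $K_b$, the $K_b$-invariants are computed on a quasi-compact quotient'' is not a valid step: taking $K_b$-invariants of $H^i_c$ is not the same as computing cohomology on a quotient, and this claimed admissibility of $H^i_c(\mathcal{M}_{K_p})$ over $\J_b(\Q_p)$ is in any case not what you need. The correct chain is: finite type implies support in finitely many Bernstein blocks, each a noetherian category of finite cohomological dimension, hence a finite resolution by finitely generated projectives of the form $c\text{-}\Ind_{K_b}^{\J_b}\sigma$ with $\sigma$ finite-dimensional, so $\Ext^j_{\J_b(\Q_p)}(H^i_c(\mathcal{M}_{K_p}),\rho)$ is the cohomology of a finite complex whose terms $\hom_{K_b}(\sigma,\rho|_{K_b})$ are finite-dimensional because $\rho$ is admissible. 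With those two points corrected, your sketch does match the intended argument.
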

	\subsection{Variétés de Shimura de type PEL unitaire} \label{itm : Sh}
	%\subsection{Définitions et premières propriétés}
	On considère la donnée de Shimura de type PEL simple suivante:  $\mathcal{D} = \Big(F, B, *, V, \langle \cdot | \cdot \rangle , G, \Lambda, h \Big) $ où:
	\begin{enumerate}
		\item[-] $F$ est un corps $CM$ non ramifié au-dessus de $p$.
		\item[-] $B$ est une algèbre semi-simple sur $F$, déployée en toutes les places de $F$ au-dessus de $p$. %et de dimension relative $n = [B : F]^{1/2}$.
		\item[-] $*$ est une involution positive sur $B$ : $ \forall b \in B \ tr(bb^*) > 0 $. 
		\item[-] $ \Big( V, \langle \cdot | \cdot \rangle \Big)$ est un $B$-module hermitien où $\langle \cdot | \cdot \rangle$ est une $\Q$-forme alternée telle que $ \langle xv, w \rangle = \langle v, x^*w \rangle $ pour tous $v, w \in V $ et $ x \in B $.
		\item[-] $G$ est le groupe algébrique défini sur $\Q$ par 
		$$ G(R) = \{ g \in GL_{B\otimes_{\Q} R}  (V \otimes_{\Q} R) \vert \langle gv, gw \rangle = c(g)\langle v, w \rangle; c(g) \in R^* \} $$
		On suppose de plus que $G_{\R}$ est isomorphe au groupe des similitudes unitaires de signature $(1, n-1), (0,n), \cdots, (0,n)$ où $n = [B : F]^{1/2} \text{rg}_BV$.
		\item[-] $\Lambda$ est un $\mathcal{O}_{F_p}$ réseau $\mathcal{O}_B$-invariant dans $V \otimes_{\Q}{\Q_p}$ tel que la forme $\langle \cdot | \cdot \rangle$ induit une $\Z_p$ forme non dégénérée sur $\Lambda$. On pose également $C_0 = Stab_{G(\Q_p)}(\Lambda) = \{ g \in G(\Q_p) \ \vert \ g \Lambda = \Lambda \}$.
		\item[-] Enfin, $h$ est un morphisme de groupes algébriques
		\[
		h : \mathop{\mathrm{Res}}\nolimits_{\C / \R}(\G_{m, \C}) \longrightarrow G_{\R}
		\] 
		qui définit une $\Q$-structure de Hodge $V = V_0 \oplus V_1$.
		
		On lui associe un morphisme
		$$ \mu_h : \G_{m, \C} \hookrightarrow \displaystyle \prod_{Gal(\C / \R)} \G_{m, \C} = \Big(\mathop{\mathrm{Res}}\nolimits_{\C / \R}(\G_{m, \C})\Big)_{\C} \xrightarrow{h_{\C}} G_{\C}. $$
	\end{enumerate}
	
	\begin{notation}
		Soit $E$ le corps reflex de cette donnée de type PEL, à savoir le corps de définition de la classe de conjugaison de $\mu_h$. Lorsque $n > 2$ on a $F = E$.
	\end{notation}
	
	Fixons $\Phi \subset \hom(E, \C) $ un type $CM$ de $E$. Chaque élément $\tau \in \Phi$ fournit un plongement $ \nu \circ \tau : E \hookrightarrow \overline{\Q}_p $ où $ \nu $ est un isomorphisme abstrait entre $ \C $ et $ \overline{\Q}_p $. Soient $ (w_i)_{i \in I} $, $(w_j)_{j \in J}$ les places de $E$ divisant $p$ associées à tous ces plongements et où on suppose que $ \forall i \in I \ w_i \neq w_i^c $ et $ \forall j \in J \ w_j = w_j^c $. On a alors :
	\begin{enumerate}
		\item[-] $
		G_{\Q_p} \simeq \prod_{i \in I} GL_n(E_{w_i}) \times G \left( \prod_{j \in J} GU(E_{w_j}, n) \right)
		$
		où le $G$ devant le produit signifie que l'on prend le sous-groupe du produit formé des uplets ayant le même facteur de similitude.
		\item[-] $
		B_{\Q_p} \simeq \prod_i \Big( M_d(E_{w_i}) \times M_d(E_{w_i})^{\text{opp}} \Big) \times \prod_j M_d(E_{w_j}).
		$
	\end{enumerate}
	
	L'équivalence de Morita permet de supposer qu'en chaque place on est dans l'un des cas suivant :
	\begin{enumerate}
		\item \textbf{Cas EL}
		\begin{enumerate}
			\item[-] $ B_{\Q_p} = E_{w_i} \times E_{w_i} $, \quad $\mathcal{O}_{B_{\Q_p}} = \mathcal{O}_{E_{w_i}}  \times \mathcal{O}_{E_{w_i}} $ et $(x, y)^* = (y, x)$.
			\item[-] $V_{\Q_p} = V_i \oplus V_i^{\vee}$ où $V_i^{\vee}$ est l'espace dual de $V_i$.
			\item[-] $ \langle x \oplus \phi, x' \oplus \phi'\rangle = \phi'(x) - \phi (x')$.
		\end{enumerate}
		\begin{remarque}
			Pour chaque $b \in B(GL_n(E_{w_i}), \mu_{\overline{\Q}_p})$, on obtient une donnée de Rapoport-Zink simple de type $EL$ de la forme $\{ E_{w_i}, *, V_i, \mu_{\overline{\Q}_p}, b \} $.
		\end{remarque}
		\item \textbf{Cas PEL}
		\begin{enumerate}
			\item[-] $B_{\Q_p} = E_{w_j}$, $\mathcal{O}_{B_{\Q_p}} = \mathcal{O}_{E_{w_j}}$ et l'involution $*$ est égale à $\sigma^{[E_{w_j} / \Q_p] / 2}$ où $\sigma$ est le morphisme de Frobenius de l'extension non ramifiée $E_{w_j} / \Q_p $.
			\item[-] $V$ est un $E_{w_j}$-espace vectoriel de dimension $n$.
			\item[-] Dans une base convenable de $V$, la forme $\langle \cdot | \cdot \rangle$ est donnée par la formule:
			$$ \forall \ X,Y \in V \quad \langle X,Y \rangle = \Tr_{E_{w_j}/\Q_p} (\alpha ^{t}X^{*}JY) $$
			où $\alpha \in E_{w_j}$ est tel que $\sigma(\alpha) = - \alpha$, et
			$$ J = \left( \begin{array}{cc}
				0 & I_{n/2} \\
				I_{n/2} & 0
			\end{array} \right) $$
			si $n$ est pair et
			$$ J = \left( \begin{array}{ccc}
				0 & 0 & I_{(n-1)/2} \\ 
				0 & 1 & 0 \\  
				I_{(n-1)/2} & 0 & 0
			\end{array} \right) $$
			si $n$ est impair.
		\end{enumerate} 
		\begin{remarque}
			Pour chaque $b \in B(GU(E_{w_j}, n), \mu_{\overline{\Q}_p})$, on obtient une donnée de Rapoport-Zink simple de type $PEL$ de la forme $\{ E_{w_j}, *, V, \langle \cdot | \cdot \rangle, \mu_{\overline{\Q}_p}, b \} $.
		\end{remarque}
	\end{enumerate}	
	%	Pour chaque $b \in B(G_{\Q_p}, \mu_{\overline{\Q_p}})$ on note encore $(\mathcal{D}_{\Q_p}, b) = \{ E_p, *, V, \langle \cdot | \cdot \rangle, \mu_{\overline{\Q_p}}, b \} $ la donnée de Rapoport-Zink locale simple associée. C'est une donnée de Rapoport-Zink simple non ramifiée de type PEL de signature $(1, n-1)$.
	%\subparagraph*{Variétés de Shimura sur $E$}
	
	Soit $K$ un sous-groupe compact ouvert suffisamment petit de $G(\A_f)$ et soit $\Sh_K$ le foncteur qui associe à chaque $F$-schéma $S$ l'ensemble des quadruplets $(A, \lambda, \iota, \overline{\kappa})$ où 
	\begin{enumerate}
		\item[-] $A$ est un $S$-schéma abélien à isogénie près.
		\item[-] $\lambda$ est une polarisation $\Q^{\times}$-homogène de $A$.
		\item[-] $\iota: B \longrightarrow \End(A)_{\Q}$ est un morphisme d'algèbres tel que $*$ correspond à l'involution de Rosati associée à $\lambda$.
		\item[-] $ \overline{\kappa} : V\otimes \A_f \longrightarrow H_1(A, \A_f) $ est un morphisme de $B \otimes \A_f$-modules symplectiques définissant une structure de niveau $K$ sur le module de Tate de $A$.
		\item[-] On suppose de plus la condition suivante:
		$$ \forall b \in B \det(b, \Lie(A)) = \det (b, V_0) $$
	\end{enumerate}
	
	\begin{théorème} (\cite{Kot92} sec.$5$, sec. $8$)
		Le foncteur $(\Sh_K)$ est représenté par une variété quasi-projective lisse définie sur $F$. De plus la tour de variétés $(\Sh_K)_K$ est munie d'une action  de $G(\A_f)$ par action sur la structure de niveau.	
	\end{théorème}
	%\subparagraph{Modèles entiers}
	
	Maintenant on s'intéresse aux modèles entiers des variétés de Shimura.
	
	Considérons un sous-groupe compact ouvert $K^p$ de $G(\mathbb{A}_f^p)$. Soit $S_{K^p}$ le foncteur qui associe à un $\mathcal{O}_{E_p}$-schéma $S$, l'ensemble des couples $(A, \overline{\lambda}, \iota, \overline{\kappa})$, où:
	\begin{enumerate}
		\item[-] $A$ est une variété abélienne sur $S$ de dimension $n$.
		\item[-] $\overline{\lambda}$ est une $\Q^{\times}$-classe de polarisations $p$-principales.
		\item[-] $ \iota : \mathcal{O}_{B} \longrightarrow End(A) \otimes_{\Z} \Z_{(p)} $ est tel que l'involution de Rosati définie par $\overline{\lambda}$ sur $End(A) \otimes_{\Z} \Z_{(p)}$ induit l'involution $*$ sur $\mathcal{O}_B$. On suppose de plus que $A$ satisfait la condition de signature de Kottwitz.
		\item[-] $\overline{\kappa} : H_1(A, \mathbb{A}_f^p) \longrightarrow V \otimes_{\Q} \mathbb{A}_f^p $ est une $K^p$-structure de niveau.
	\end{enumerate}
	%\begin{remarque}
	%	Il n'y a pas de foncteurs correspondant à des niveau en $p$.
	%\end{remarque}
	\begin{théorème} (\cite{Kot92} sec.$5$) \phantomsection \label{itm:modele entier hors p}
		Le problème de module $S_{K^p}$ est représenté par un $\mathcal{O}_{E_p}$ schéma lisse, quasi-projectif. Le groupe $G(\A_f^p)$ opère sur la tour $(S_{K^p})_{K^p}$ par action sur la structure de niveau.
		
		Soit $C_0 = Stab_{G(\Q_p)}(\Lambda)$ le sous-groupe compact hyperspécial associé. Il y a alors des isomorphismes compatibles pour $K^p$ variable:
		$$ S_{K^p} \otimes_{\mathcal{O}_{E_p}} E_p \xrightarrow{\sim} \Sh_{C_0K^p} \otimes_E E_p. $$	
	\end{théorème}
	
	%\textbf{Modèles entiers avec structures de niveau en $p$.}
	%\textbf{}\\
	
	Supposons que la variété de Shimura est \textit{compacte}. On rappelle quelques constructions de modèles entiers de variétés de Shimura avec niveau en $p$ (cf \cite{Man05} section $6$).
	
	Pour $ K^p \subset G(\A_f^p) $, on définit pour tout $ m \geqslant 0$
	$$ K^p(m) = K^p \times V_m \subset G(\A_f) $$ où $ V_m = \{ g \in G(\Q_p) \ \vert \ g(\Lambda) = \Lambda, \ c(g) \in \Z_p^{\times} \ g_{\vert \Lambda} \equiv 1 \mod p^m \Lambda \}$. On remarque que $V_0 = G(\Z_p)$.
	
	Lorsque $K^p$ et $m$ varient, les $K^p(m)$ forment un système direct de sous-groupes compacts ouverts suffisamment petits de $G(\A_f)$, cofinal au système de tous les sous-groupes compacts ouverts. Pour tout niveau $K^p$, d'après \ref{itm:modele entier hors p}, on dispose d'un $\mathcal{O}_{E_p}$-schéma $S_{K^p(0)}$ classifiant des variétés abéliennes avec structures additionnelles. Soit $\mathcal{G} := \mathcal{A}[p^{\infty}]$ le groupe $p$-divisible muni de structures additionnelles associé à la variété abélienne universelle de $S_{K^p(0)}$. Pour $m > 0$, on définit le foncteur $S_{K^p(m)}$ qui associe à un $S_{K^p(0)}$-schéma $T$ l'ensemble des morphismes de groupes 
	\[
	\alpha : p^{-m} \Lambda / \Lambda \longrightarrow \mathcal{G}[p^m](T)
	\]
	satisfaisant les conditions 
	\begin{enumerate}
		\item[-] $\{ \alpha(x) | x \in p^{-m}\Lambda / \Lambda \}$ est un << full set of sections >> de $\mathcal{G}[p^m]_{T} / T$,
		\item[-] $\alpha$ est $\mathcal{O}_{B_{\Q_p}}$-équivariant, 
		\item[-] $\alpha$ envoie $\langle \cdot | \cdot \rangle$ sur l'accouplement de $Weil$ de $\mathcal{G}[p^m](T)$, à un facteur dans $(\Z / p^m \Z)^{\times}$ près.
	\end{enumerate}
	\begin{théorème} (\cite{Man05}  proposition 15)
		Le foncteur $S_{K^p(m)}$ est représentable par un $S_{K^p(0)}$-schéma fini.
	\end{théorème}
	
	Lorsque $K^p$ et $m$ varient, les schémas $S_{K^p(m)}$ forment un système projectif muni d'une action de $G(\A^p_f) \times V_0 \subset G(\A_f) $. En général, on ne peut pas étendre cette action en une action de $G(\A_f)$, faute de bon modèles entiers en $p$. Afin de contourner cette difficulté, Mantovan considère une classe plus grande de modèles entiers de variétés de Shimura telle que l'action de $G(\A^p_f) \times V_0$ s'étend en une action d'un sous-monoïde $G(\A_f)^{+}$ vérifiant $G(\A_f) = \langle G(\A_f)^{+}, p \rangle$. 
	\begin{remarque}
		Dans le cas où la variété de Shimura \textit{n'est plus compacte}, les modèles entiers avec niveau en $p$ existent encore grâce au travail de Kai Wen Lan et Benoît Stroh. Pour $K$ un niveau, pas forcément maximal en $p$, il existe un modèle entier $ S_K \longrightarrow \text{Spec}(\mathcal{O}_{E_p}) $ comme dans le cas (Nm) de 2.1 de \cite{LS} (consulter également la section 6 de \cite{Lan}). Lorsque le niveau $K$ varie, les schémas $S_K$ forment un système projectif muni d'une action de $G(\A_f)$. (Consulter \cite{LS} page 33-34 pour plus de détails). 	
	\end{remarque}
	%Comme les modèles entiers sont propres, la théorie des cycles évanescents nous fournit une égalité dans la catégorie dérivée
	%$$ R\Gamma_c (\Sh_{U^p(m)} \times_{E_p} \overline{E}_p, \mathcal{L}_{\xi})  \simeq R\Gamma_c ( \overline{S}_{U^p(m)}, R\Psi_{\eta}(\mathcal{L}_{\xi})). $$
	
	%Lorsque $U^p(m)$ varie, on a deux systèmes de complexes munis d'une action de $G(\A_f) \times W_{E_p}$ et les quasi-isomorphismes sont équivariants. On a des isomorphismes de représentations de $G(\A_f) \times W_{E_p}$
	%$$ H_c^i(\Sh_{\infty}, \mathcal{L}_{\xi}) \simeq \mathop{\mathrm{lim}}_{\overrightarrow{U^p, m}} H^i R\Gamma_c(\overline{S}_{U^p(m)}, R\Psi_{\eta}(\mathcal{L}_{\xi})). $$
	
	%Pour calculer $H^i R\Gamma_c(\overline{S}_{U^p(m)}, R\Psi_{\eta}(\mathcal{L}_{\xi}))$ on utilise la stratification de Newton.
	%ATTENTION
	%Plus précisément, dans notre cas la variété de Shimura est propre de type PEL et de signature $(1,n-1), (0,n) \cdots (0,n)$ à l'infini alors $B(G, \mu) = \{ b_k \ \vert \ 0 \leq k \leq [\frac{n}{2}] \}$ dont $b_0$ est la classe basique. De plus
	%$$ b_0 \prec \cdots \prec b_{[\frac{n}{2}]} $$
	%FIN ATTENTION
	
	%\textit{Cycles proches et cohomologie de variétés de Shimura.} \\
	
	À toute $\overline{\Q}_{\ell}$-représentation algébrique irréductible de dimension finie $\xi$ de $G$, on associe, cf \cite{HT01} p.96, un système local $\mathcal{L}_{\xi}$ sur la tour de variétés de Shimura $(\Sh_K)_K$. En particulier, lorsque $\xi = 1$, on retrouve le système local $\overline{\Q}_{\ell}$.
	
	Nous allons utiliser la stratification de Newton afin de calculer la cohomologie de la variété de Shimura. Nous ne considérons que la signature $\mu = (1, n-1), (0, n), \cdots, (0,n)$. Dans ce cas, $ B(G, \mu) = \{ b_0, b_1, \cdots, b_{[\frac{n}{2}]} \} $ et de plus $ b_0 \prec \cdots \prec b_{[\frac{n}{2}]} $ où $b_0$ est l'unique classe basique. Rappelons que l'on associe à $\mu$ un polygone de Hodge et à chaque $b \in B(G, \mu)$ un polygone de Newton, en particulier le polygone de Newton correspondant à $b_{[\frac{n}{2}]}$ coïncide avec le polygone de Hodge. \textit{Remarquons que tous les polygones de Newton non basiques touchent le polygone de Hodge en dehors des points extrémaux.} On a un diagramme des polygones avec $n = 5$.
	\begin{center}
		\begin{tikzpicture}[scale = 0.75]
			\draw (0,0) node [above] {$O$} node{$\bullet$};	
			\draw (2,0) node [below] {$A_1$} node{$\bullet$};
			\draw (4,1) node [below] {$A_2$} node{$\bullet$};
			\draw (6,2) node [below] {$A_3$} node{$\bullet$};
			\draw (8,3) node [below] {$A_4$} node{$\bullet$};
			\draw (10,5) node [below] {$A_5$} node{$\bullet$};
			\draw [dashed] (-1,1) -- (11,1);
			\draw [dashed] (-1,5) -- (11,5);
			\draw [dashed] (-1,2) -- (11,2);
			\draw [dashed] (-1,3) -- (11,3);
			\draw [dashed] (-1,4) -- (11,4);
			\draw [dashed] (1,0) -- (1,5);
			\draw [dashed] (2,0) -- (2,5);
			\draw [dashed] (3,0) -- (3,5);
			\draw [dashed] (4,0) -- (4,5);
			\draw [dashed] (5,0) -- (5,5);
			\draw [dashed] (6,0) -- (6,5);
			\draw [dashed] (7,0) -- (7,5);
			\draw [dashed] (8,0) -- (8,5);
			\draw [dashed] (9,0) -- (9,5);
			\draw [dashed] (10,0) -- (10,5);
			\draw [dashed] (11,0) -- (11,5);
			\draw [dashed] (-1,0) -- (-1,5);
			\draw [-] (-1,0) -- (11,0);
			\draw [-] (0,0) -- (0,5);
			\draw [-] (0,0) -- (4,1);
			\draw [-] (6,2) -- (10,5);
			\draw [very thick] (0,0) -- (2,0);
			\draw [very thick] (2,0) -- (8,3);
			\draw [very thick] (8,3) -- (10,5);
			\draw [-] (0,0) -- (10,5);
			\draw [->] (12,3.5) -- (8.7,3.5);
			\draw (12,3.5) node [above, right] {$b_2 - \text{polygone de Hodge} $};
			\draw [->] (8.5,6) -- (8.5,4.4);
			\draw (8.5,6) node [above, right] {$b_0 - \text{basique}$};
			\draw [->] (7.2,6) -- (7.2,3.05);
			\draw (7.2,6) node [above] {$b_1$};
		\end{tikzpicture}
	\end{center}
	\begin{notation}
		Pour tout entier $i \geq 0$ on notera
		\[
		H^i_c(\Sh, \mathcal{L}_{\xi}) = \mathop{\mathrm{lim}}_{\overrightarrow{K}} H^i_c(\Sh_K \times_{E_p} \overline{E}_p, \mathcal{L}_{\xi}).
		\]
		C'est un $\overline{\Q}_{\ell}$-espace vectoriel muni d'une action linéaire de $G(\A_f) \times W_{E_p}$. En notant $\Psi_{\eta}$ le foncteur cycles proches on a
		\[
		H^i_c( \overline{\Sh}, R\Psi_{\eta} (\mathcal{L}_{\xi})) := \mathop{\mathrm{lim}}_{\overrightarrow{K^p, m}} H^i R\Gamma_c( \overline{S}_{K^p(m)}, R\Psi_{\eta}(\mathcal{L}_{\xi})).
		\]
		Pour $b \in B(G, \mu)$ on notera
		\[
		H^i_c(\overline{\Sh}(b), R\Psi_{\eta} (\mathcal{L}_{\xi})) := \mathop{\mathrm{lim}}_{\overrightarrow{K^p, m}} H^i R\Gamma_c( \overline{S}_{K^p(m)}(b), R\Psi_{\eta}(\mathcal{L}_{\xi})_{\vert \overline{S}_{K^p(m)}(b) } ).
		\]
	\end{notation}
	
	%Supposons maintenant que \textit{la variété de Shimura n'est plus compacte}. Le travail de Kai-Wei Lan et Benoît Stroh nous permets quand même de relier la cohomologie de la variété de Shimura avec celle de sa réduction modulo $p$.
	\begin{théorème} (\cite{Man05}, \cite{Man11}, \cite{LS}) \phantomsection \label{itm : somme des strates}
		On a une suite spectrale $G(\A_f) \times W_{E_p}$-équivariante
		\[
		E_1^{p,q} = H_c^{p+q}(\overline{\Sh}(b_p), R\Psi_{\eta} (\mathcal{L}_{\xi})) \Longrightarrow H_c^{p+q}(\Sh, \mathcal{L}_{\xi}).
		\]
	\end{théorème}
	\begin{remarque}
		On ne suppose pas que la variété de Shimura soit compacte.
	\end{remarque}
	\begin{proof}
		La stratification de Newton de la fibre spéciale de la variété de Shimura induit une suite spectrale $G(\A_f) \times W_{E_p}$-équivariante (\cite{Man05}, theo. 22; \cite{Man11}, theo. 3.1 dans le cas où la variété de Shimura est compacte et \cite{LS}, theo. 6.32 lorsque la variété de Shimura n'est pas compacte) :
		\[
		E_1^{p,q} = H_c^{p+q}(\overline{\Sh}(b_p), R\Psi_{\eta} (\mathcal{L}_{\xi})) \Longrightarrow H_c^{p+q}(\overline{\Sh}, R\Psi_{\eta} (\mathcal{L}_{\xi})).
		\]   
		
		Or on a un isomorphisme $G(\A_f) \times W_{E_p}$-équivariant suivant
		\[
		R\Gamma_c(\Sh, \mathcal{L}_{\xi}) \simeq R\Gamma_c(\overline{\Sh}, R\Psi_{\eta} (\mathcal{L}_{\xi})).
		\]
		
		Lorsque la variété de Shimura est compacte, c'est un fait standard de la théorie de cycles proches; lorsque la variété de Shimura n'est plus compacte, c'est le corollaire 5.20 de \cite{LS}.
		
		Finalement on a une suite spectrale $G(\A_f) \times W_{E_p}$-équivariante :
		\[
		E_1^{p,q} = H_c^{p+q}(\overline{\Sh}(b_p), R\Psi_{\eta} (\mathcal{L}_{\xi})) \Longrightarrow H_c^{p+q}(\Sh, \mathcal{L}_{\xi}).
		\]
	\end{proof}
	%\begin{remarque}
	%La construction de modèles entiers en $p$ de Kai-Wei Lan et Benoît Stroh est différente de celle utilisée par Mantovan.
	%\end{remarque}
	
	\begin{théorème} \cite{Man05}, \cite{Man08}, \cite{Man11} \cite{LS}, \cite{Shen}. \phantomsection \label{itm : induction parabolique}
		On suppose que $b$ est une strate non basique telle que son polygone de Newton touche son polygone de Hodge à un point de rupture du polygone de Newton en dehors des points extrémaux. Alors les groupes de cohomologie $ \displaystyle   \sum _i (-1)^i H^i_c(\overline{\Sh}(b), R\Psi_{\eta} (\mathcal{L}_{\xi}))$ ne contiennent pas de représentation automorphe dont la composante en $p$ est une représentation supercuspidale de $G(\Q_p)$.
	\end{théorème}
	\begin{proof}
		Tout d'abord on exprime la cohomologie de la strate $b$ en fonction de celle de la variété d'Igusa et de l'espace de Rapoport-Zink associé :
		\[
		\sum_{i} (-1)^i H^i_c(\overline{\Sh}(b), R\Psi_{\eta} (\mathcal{L}_{\xi})) = \sum_{t,s,r} (-1)^{t+s+r}  \mathop{\mathrm{lim}}_{\overrightarrow{K}} \Ext_{J_b(\Q_p)}^t (H_c^s(\mathcal{M}_{K}, \Q_{\ell}(D_K)),H_c^r(Ig(b) )).
		\]	
		
		Lorsque la variété de Shimura est compacte, cette formule est démontrée dans \cite{Man05} (théorème 13 et page 28) et \cite{Man11} (théorème 3.2). Lorsque la variété de Shimura n'est pas compacte, la formule est le théorème 6.26 de \cite{LS}.
		
		La deuxième étape consiste à montrer que les représentations
		\[
		\mathop{\mathrm{lim}}_{\overrightarrow{K}} \Ext_{J_b(\Q_p)}^t (H_c^s(\mathcal{M}_{K}, \Q_{\ell}(D_K)),H_c^r(Ig(b) ))
		\]
		sont des induites paraboliques (\cite[Coro. 7.1]{Shen}  et \cite[théo. 41]{Man08}).	
	\end{proof}
	\begin{definition}
		Pour $H$ un groupe réductif $p$-adique et $V$ un $H$-module. On définit la partie supercuspidale de $V$ par la formule $ V_{cusp} = \bigoplus_{e} e \cdot V $ où $e$ parcourt les idempotents du centre de Bernstein de $H$ associés aux classes d'équivalences inertielles des représentations supercuspidales de $H$.
	\end{definition}
	Dans la suite, on notera $ H^i_c(\overline{\Sh}(b_0), R\Psi_{\eta} (\mathcal{L}_{\xi}))_{p-cusp}$ et $H^i_c(\Sh, \mathcal{L}_{\xi})_{p-cusp} $ pour les parties $G(\Q_p)$-supercuspidales de la cohomologie. Le corollaire suivant est une conséquence du théorème \ref{itm : induction parabolique} et du théorème \ref{itm : somme des strates}. 
	\begin{corollaire} \phantomsection \label{itm:cusp}
		On a des isomorphismes :
		\[
		H^i_c(\overline{\Sh}(b_0), R\Psi_{\eta}(\mathcal{L}_{\xi}))_{p-cusp} \simeq H^i_c(\Sh, \mathcal{L}_{\xi})_{p-cusp}
		\]	  
		où $b_0$ désigne la classe basique.
	\end{corollaire}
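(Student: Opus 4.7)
Le plan est de d\'eduire le corollaire directement de la suite spectrale de Newton et du th\'eor\`eme d'annulation qui la pr\'ec\`ede. La suite spectrale
\[
E_1^{p,q} = H_c^{p+q}(\overline{\Sh}(b_p), R\Psi_{\eta} (\mathcal{L}_{\xi})) \Longrightarrow H_c^{p+q}(\Sh, \mathcal{L}_{\xi})
\]
est $G(\A_f) \times W_{E_p}$-\'equivariante, et en particulier $G(\Q_p)$-\'equivariante. La premi\`ere \'etape consiste \`a appliquer le projecteur sur la partie $G(\Q_p)$-supercuspidale \`a l'ensemble de la suite. Ce projecteur, d\'efini par l'idempotent du centre de Bernstein associ\'e aux classes inertielles supercuspidales, d\'ecoupe un facteur direct de la cat\'egorie des repr\'esentations lisses de $G(\Q_p)$; il est donc exact et commute aux diff\'erentielles, produisant une nouvelle suite spectrale dont l'aboutissement est $H^{p+q}_c(\Sh, \mathcal{L}_{\xi})_{p-cusp}$.

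La seconde \'etape consiste \`a examiner la page $E_1$ apr\`es projection. Les strates sont index\'ees par $b_0, b_1, \ldots, b_{[n/2]}$, o\`u $b_0$ est la classe basique. La remarque rappel\'ee plus haut affirme que chaque polygone de Newton non basique touche le polygone de Hodge en un point de rupture en dehors des points extr\'emaux, comme l'illustre le diagramme pr\'ec\'edent. Le th\'eor\`eme d'annulation s'applique donc \`a chaque strate $b_p$ avec $p \geq 1$ et fournit
\[
H_c^{p+q}(\overline{\Sh}(b_p), R\Psi_{\eta} (\mathcal{L}_{\xi}))_{p-cusp} = 0
\]
pour tout $p \geq 1$ et tout $q \geq 0$. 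Seule subsiste la colonne $p = 0$ dans la suite spectrale projet\'ee.

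La troisi\`eme \'etape est purement formelle: une suite spectrale \`a une unique colonne non nulle d\'eg\'en\`ere au niveau $E_1$ et produit imm\'ediatement l'isomorphisme $G(\A_f) \times W_{E_p}$-\'equivariant souhait\'e. Aucun obstacle s\'erieux n'est \`a pr\'evoir, le contenu essentiel \'etant encapsul\'e dans les deux th\'eor\`emes pr\'ec\'edents. Le point le plus d\'elicat demeure de v\'erifier que l'hypoth\`ese du th\'eor\`eme d'annulation est bien satisfaite par chaque strate non basique pour la signature $(1, n-1), (0, n), \ldots, (0, n)$, ce qui se lit directement sur le diagramme des polygones.
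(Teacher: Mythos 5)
Votre argument est correct et correspond exactement \`a la d\'emonstration implicite du texte : le corollaire se d\'eduit de la suite spectrale de Newton $G(\A_f)\times W_{E_p}$-\'equivariante en appliquant l'idempotent supercuspidal du centre de Bernstein (exact, donc compatible aux diff\'erentielles), puis en invoquant le th\'eor\`eme d'annulation pour chaque strate non basique, dont l'hypoth\`ese sur le contact des polygones est bien v\'erifi\'ee pour la signature $(1,n-1),(0,n),\cdots,(0,n)$. La seule colonne restante \'etant celle de la strate basique, la d\'eg\'en\'erescence donne l'isomorphisme voulu, exactement comme le sugg\`erent les r\'ef\'erences cit\'ees.
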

	
	\subsection{Uniformisation rigide} 
	
	Soit $\overline{S}_{K^p} = S_{K^p}\times_{\mathop{\mathrm{Spec}}\nolimits(\mathcal{O}_{E_p})} \F_{p^2}$ la fibre spéciale de $S_{K^p}$. Le schéma $\overline{S}_{K^p}$ est alors stratifié par le polygone de Newton du cristal muni de structures additionnelles. Plus précisément, pour chaque point $ x = (A_0, \overline{\lambda}_0, \iota_0, \overline{\eta}_0)$ dans $\overline{S}_{K^p}(\overline{\F}_p)$, on note $(X_x, \iota, \lambda)$ le groupe $p$-divisible muni d'une action de $\mathcal{O}_{E}$ et d'une polarisation $p$-principale, associé à $x$ ainsi que $(N_x, \iota, \langle \cdot | \cdot \rangle)$ l'isocristal associé. Si $b \in B(G_{\Q_p}, \mu_{\Q_p})$ on pose alors: 
	$$ \overline{S}_{K^p}(b) = \{ x \in \overline{S}_{K^p}(\overline{\F}_p) \vert (N_x, \iota, \langle \cdot | \cdot \rangle) \simeq (V_{\Q_p}\otimes_{\Q_p} \breve{\Q}_p, b\otimes \sigma, \langle \cdot | \cdot \rangle ) \} $$
	
	On a la stratification: $$ \overline{S}_{K^p} = \displaystyle \mathop{\mathrm{\coprod}}_{b \in B(G_{\Q_p}, \mu_{\Q_p}) } \overline{S}_{K^p}(b) $$ et si $y \in B(G_{\Q_p}, \mu_{\Q_p})$ est fixé alors
	$ \displaystyle \mathop{\mathrm{\coprod}}_{ b \prec y } \overline{S}_{K^p}(b) $ \ est un fermé dans $\overline{S}_{K^p}$.
	
	Soient maintenant $y = (A_y, \overline{\lambda}_y, \iota_y, \overline{\eta}_y ) $ un point géométrique dans la strate basique (cf \ref{itm: basique}) ainsi que $(\mathbb{X}, \iota, \lambda)$ le groupe $p$-divisible muni d'une action de $\mathcal{O}_{E}$ et d'une polarisation $p$-principale associé.
	
	Nous notons $\mathcal{M}$ le problème de module associé au groupe $p$-divisible $(\mathbb{X}, \iota, \lambda)$. Par définition, le groupe $J(\Q_p) := J_{b_y}(\Q_p)$ agit à gauche sur $\mathcal{M}$.
	
	On pose $\phi$ la classe d'isogénie du triplet $(A_y, \overline{\lambda}_y, \iota_y)$ et \ $ I^{\phi} = \Aut(A_y, \lambda_y, \iota_y) $ \ le groupe réductif sur $\Q$ associé. Le groupe $I^{\phi}(\Q)$ agit par quasi-isogénies sur le groupe $p$-divisible $(A_y[p^{\infty}], \lambda, \iota)$, ce qui donne une injection $ I^{\phi}(\Q) \hookrightarrow J(\Q_p) $, en particulier, $I^{\phi}(\Q)$ agit sur $\mathcal{M}$. De plus, l'action de $I^{\phi}(\Q)$ sur le module de Tate $H_1(A_y, \A_f^p)$ donne une injection $ I^{\phi}(\Q) \hookrightarrow G(\A_f^p) $. 
	
	A $\phi$ est associé l'ensemble
	$
	\tilde{S}(\phi)(\overline{\F}_p) = \big\{ z \in \overline{S}(b)(\overline{\F}_p) \vert \ \mathop{\mathrm{la \ classe \ d'isogénie \ de \ }}\nolimits (A_z, \lambda_z, \iota_z) \in \phi \big\}
	$. On peut munir $ \tilde{S}(\phi)(\overline{\F}_p) $ d'une structure de sous-schéma fermé réduit.
	
	\begin{théorème}(6.23 de \cite{RZ96})
		Il y a une uniformisation de schémas formels sur $\Spf (\mathcal{O}_{\breve{\Q}_p})$
		$$ I^{\phi}(\Q) \setminus ( \mathcal{M} \times G(\A_f^p) / K^p ) \quad \xrightarrow{\sim} \quad (S_{K^p} \otimes_{\mathcal{O}_{E_p}} \mathcal{O}_{\breve{\Q}_p} ) \widehat{/}_{\tilde{S}(\phi)}  $$
		
		Lorsque $K^p$ varie les différents isomorphismes d'uniformisation sont compatibles et commutent à l'action de $G(\A_f)$.	
	\end{théorème}
	
	%En général, on ne sait pas construire de bon modèles entiers des variétés de Shimura de type PEL non ramifiées lorsque le niveau en $p$ n'est pas maximal (parahorique ou compact hyperspécial). L'approche de \cite{Far04} pour contourner cette difficulté consiste à travailler avec les espaces rigides dès qu'on n'est plus en niveau maximal. Pour chaque $K^p \subset G(\A_f^p)$ on note:
	En suivant \cite{Far04}, on travaille avec les espaces rigides dès qu'on n'est plus en niveau maximal. Pour chaque $K^p \subset G(\A_f^p)$ on note:
	\begin{enumerate}
		\item[-] $S_{K^p}^{\wedge}$ le complété $p$-adique du schéma $S_{K^p} \otimes_{\mathcal{O}_{E_p}} \mathcal{O}_{\breve{\Q}_p} $ ainsi que $(S^{\wedge}_{K^p})^{an}$ l'espace analytique associé.
		\item[-] $\Sh^{an}_{C_0K^p}$ l'espace analytique sur $\breve{\Q}_p$ associé à la variété algébrique $\Sh_{C_0K^p} \otimes \breve{\Q}_p$.
	\end{enumerate}
	%Il y a une immersion $(S^{\wedge}_{K^p})^{an} \hookrightarrow \Sh^{an}_{C_0K^p}$ qui est un isomorphisme lorsque le schéma $S_{K^p}$ est propre. En outre, le morphisme de spécialisation est défini sur cet espace analytique $sp : (S^{\wedge}_{K^p}) \longrightarrow \overline{S}_{K^p}.$  	
	\begin{definition}
		Nous noterons
		$ \Sh_{C_0K^p}^{an}(\phi) = ((S_{K^p})^{\wedge}_{/ \tilde{S}(\phi)})^{an} $
		la fibre  générique du schéma formel complété de $S$ le long de $\tilde{S}(\phi)$.
		
		Si $K_p \subset C_0, \ K = K_pK^p$ nous noterons
		$ \Sh_{K}^{an}(\phi) =  \Theta^{-1}_{C_0K^p, K}(\Sh_{C_0K^p}^{an}(\phi)) $
		(un ouvert analytique de $(\Sh_K)^{an}$) où $\Theta_{C_0K^p, K}$ est le morphisme de changement de niveau.
	\end{definition} 
	
	%C'est le tube au-dessus de $\tilde{S}(\phi)$ au sens suivant : \ pour tout $Z \in \tilde{S}(\phi)$, $(S^{\wedge}_{/ Z})^{an} = sp^{-1}(Z)$ est un domaine analytique ouvert dans $(S^{\wedge}_{K^p})^{an}$. L'espace analytique $\Sh_{C_0K^p}^{an}(\phi)$ est alors l'espace analytique recollé des ouverts $sp^{-1}(Z) \ (Z \in \tilde{S}(\phi))$ le long des $sp^{-1}(Z_1), sp^{-1}(Z_2), \ Z_1, Z_2 \in \tilde{S}(\phi)$. On a donc que $\Sh_{C_0K^p}^{an}(\phi)$ est l'ouvert analytique de $(S^{\wedge}_{K^p})^{an}$ union des $sp^{-1}(Z), \ Z \in \tilde{S}(\phi)$.
	
	%\begin{definition}
	%	Si $K_p \subset C_0, \ K = K_pK^p$ nous noterons
	%	$ \Sh_{K}^{an}(\phi) =  \Theta^{-1}_{C_0K^p, K}(\Sh_{C_0K^p}^{an}(\phi)) $
	%	un ouvert analytique de $(\Sh_K)^{an}$ où $\Theta_{C_0K^p, K}$ est le morphisme de changement de niveau. 
	%\end{definition} 
	%On a le théorème suivant:
	\begin{théorème}
		Pour $K = K_p K^p$ variable il y a des isomorphismes compatibles d'espaces analytiques sur $\breve{\Q}_p$:
		$$ I^{\phi}(\Q) \setminus (\mathcal{M}_{K_p} \times G(\A_f^p)/ K^p ) \xrightarrow{\sim} \Sh_{K}^{an}(\phi). $$
		
		Remarquons que la variété de Shimura n'est pas nécessairement compacte.
	\end{théorème}
	
	%\textbf{Une suite spectrale de Hochschild-Serre pour l'uniformisation de Rapoport-Zink}
	
	Soit comme précédemment $b_0$ la classe basique dans $B(G_{\Q_p}, \mu_{\overline{\Q_p}})$. Pour chaque classe d'isogénie $\phi = $ du triplet $(A_y, \overline{\lambda}_y, \iota_y)$, on note $b(\phi)$ l'unique élément dans $B(G_{\Q_p}, \mu_{\Q_p})$ correspondant à la classe d'isogénie du groupe $p$-divisible avec structures additionnelles de $(A_y, \overline{\lambda}_y, \iota_y)$. Rappelons les faits suivants (cf \cite{RZ96}, $(6.34)$): 
	\begin{enumerate}
		\item[-] l'ensemble $\{ \phi \ \vert \ b(\phi) = b_0 \}$ est fini,
		\item[-] $\forall \phi $ tel que $ \ b(\phi) = b_0 $, $I^{\phi}$ est une forme intérieure de $G$, plus précisément 
		\begin{enumerate}
			\item[-] $I^{\phi}(\Q_p) = J_b(\Q_p)$ et $\forall l \neq p $ on a $I^{\phi}(\Q_{\ell}) = G(\Q_{\ell})$,
			\item[-] $I^{\phi}(\R)$ est la forme intérieure compacte modulo le centre de $G(\R)$. 
		\end{enumerate}
	\end{enumerate} 
	
	Notons $\mathcal{A}(I^{\phi})$ l'espace des représentations automorphes de $I^{\phi}$ (on tient compte des multiplicités) ainsi que $\mathcal{A}_{\xi}^{\phi}$ l'espace des formes automorphes sur $I^{\phi}$ de type $\breve{\xi}'$ à l'infini au sens où 
	$$ \mathcal{A}_{\xi}^{\phi} = \hom_{I^{\phi}(\R)} (\breve{\xi}' ,\mathcal{A}(I^{\phi})) $$
	où $ \xi' : I^{\phi}(\R) \hookrightarrow I^{\phi}(\C) = G(\C) \xrightarrow{\xi} GL(V). $
	
	Supposons maintenant que $\xi$ est irréductible. Le groupe de Lie $I^{\phi}(\R)$ étant anisotrope modulo son centre, pour un sous-groupe compact ouvert $K^p$ de $G(\A_f^p)$ on a
	$$ (\mathcal{A}_{\xi}^{\phi})^{K^p} = \bigoplus_{\substack{\Pi \in \mathcal{A}(I^{\phi}) \\ \Pi_{\infty} = \breve{\xi}}} \Pi_p \otimes (\Pi^p)^{K^p} $$
	
	\begin{théorème} \cite{Far04} \phantomsection \label{itm : suite spectrale}
		Il y a une suite spectrale $G(\A_f) \times W_{E_p}$-équivariante
		$$ E_2^{tq} = | \ker^1 (\Q, G) | \sum_{\substack{\Pi \in \mathcal{A}(I^{\phi}) \\ \Pi_{\infty} = \breve{\xi}}} \left( \mathop{\mathrm{lim}}_{\overrightarrow{K}} \Ext^t_{\J_{b}(\Q_p)} \left( H^q_c(\mathcal{M}_{K_p}, \overline{\Q}_{\ell}(D)), \Pi_p \right)  \right)_{p-cusp} \otimes (\Pi^p) $$ %\Longrightarrow \left( H^{p+q}_c(\Sh, \mathcal{L}_{\xi}) \right)_{p-cusp} $$
		dont l'aboutissement est $\left( H^{t+q}_c(\Sh, \mathcal{L}_{\xi}) \right)_{p-cusp}$ et où $D$ est la dimension de la variété de Shimura. 
	\end{théorème}
	\begin{proof}
		Tout d'abord on a un isomorphisme pour tous $t$ et $q$ (\cite{Far04} page $75$)
		\[
		\Ext^t_{\J_{b}(\Q_p)} (H^q_c(\mathcal{M}_{K_p}, \overline{\Q}_{\ell}), (\mathcal{A}_{\xi}^{\phi})^{K^p}) \simeq \sum_{\substack{\Pi \in \mathcal{A}(I^{\phi}) \\ \Pi_{\infty} = \breve{\xi}}} \Ext^t_{\J_{b}(\Q_p)} (H^q_c(\mathcal{M}_{K_p}, \overline{\Q}_{\ell}), \Pi_p) \otimes (\Pi^p)^{K^p}.
		\]
		
		D'après le corollaire $4.3.15$ de \cite{Far04}, on a la suite spectrale $G(\A_f) \times W_{E_p}$-équivariante
		\[
		E_2^{tq} = | \ker^1 (\Q, G) | \sum_{\substack{\Pi \in \mathcal{A}(I^{\phi}) \\ \Pi_{\infty} = \breve{\xi}}} \left( \mathop{\mathrm{lim}}_{\overrightarrow{K}} \Ext^t_{\J_{b}(\Q_p)} \left( H^q_c(\mathcal{M}_{K_p}, \overline{\Q}_{\ell}(D)), \Pi_p \right) \right) \otimes (\Pi^p) 
		\]
		dont l'aboutissement est $\displaystyle \mathop{\mathrm{lim}}_{\overrightarrow{K}} H^{t+q}(\Sh_K^{an}(b_0), \mathcal{L}_{\xi}^{an})$.
		
		D'autre part la strate basique est propre, en utilisant la proposition $6.9.4$ de \cite{Far04} on a une égalité de groupes de cohomologie
		\[
		\mathop{\mathrm{lim}}_{\overrightarrow{K}} H^{t+q}(\Sh_K^{an}(b_0), \mathcal{L}_{\xi}^{an}) = \mathop{\mathrm{lim}}_{\overrightarrow{K}} H^{t+q}_c(\overline{S}_K(b_0), R\Psi(\mathcal{L}_{\xi})) = H^{t+q}_c(\Sh(b_0), \mathcal{L}_{\xi}).
		\]
		
		Or le corollaire \ref{itm:cusp} donne une égalité $ H^{t+q}_c(\Sh(b_0), \mathcal{L}_{\xi})_{p-cusp} = H^{t+q}_c(\Sh, \mathcal{L}_{\xi})_{p-cusp} $. On en déduit alors la suite spectrale voulue.
	\end{proof}
	
	\section{Classification des représentations automorphes pour les groupes unitaires}
	Le but de cette section est de rappeler, dans le cas des groupes unitaires, les formules de multiplicité locales et globales dans \cite{KMSW} et \cite{Mok}, c.f. les théorèmes \ref{itm: local} et \ref{itm: global}. Nous nous intéresserons aux paquets cuspidaux lesquels sont classifiés dans \cite{Moe}. 
	Notons que pour passer des groupes unitaires aux groupes de similitudes unitaires, on devra supposer que $n$ est impair. Le résultat nouveau de cette section est la proposition \ref{itm : globaliser auto} qui sera utilisée dans la preuve du théorème principal dans la section \ref{itm: cohomologie}.

	\subsection{Groupes unitaires et leurs formes intérieures}
	Soit $F$ un corps local ou global de caractéristique $0$ et $\overline{F}$ une clôture algébrique. On note $\Gamma$ le groupe de Galois de $\overline{F} / F$. On se donne un groupe réductif $G$ défini sur $F$.
	
	\textbf{Une forme intérieure} de $G$ est un groupe réductif $G_1$ défini sur $F$ et muni d'un isomorphisme $ \varrho : G \times \overline{F} \longrightarrow G_1 \times \overline{F} $ tel que pour tout $\sigma \in \Gamma$, l'automorphisme $ \varrho^{-1} \sigma (\varrho) = \varrho^{-1} \circ \sigma \circ \varrho \circ \sigma^{-1} $ est intérieur. % Un isomorphisme entre deux formes intérieures $\varrho_1 : G \longrightarrow G_1$ et $\varrho_2 : G \longrightarrow G_2$ est un isomorphisme $ f : G_1 \longrightarrow G_2 $ définit sur $F$ tel que $ \varrho_2^{-1} \circ f \circ \varrho_1 $ est un automorphisme intérieur de $G$.
	L'application $ \varrho \longmapsto \varrho^{-1}\sigma(\varrho) $ établit un isomorphisme entre l'ensemble des classes d'isomorphismes des formes intérieures de $G$ avec $H^1(\Gamma, G_{\text{ad}})$ où $G_{ad} = G/Z$ et $Z$ est le centre de $G$.

	Soit $E/F$ une extension quadratique de corps, notons $U_{E/F}(n)$ le groupe unitaire quasi-déployé en $n$ variables associé. On a :
	\begin{enumerate}
		\item[-] Lorsque $F$ est un corps $p$-adique alors
		$ H^{1}(\Gamma, U_{E/F}(n)_{ad}) = \Z / \delta \Z $ où $\delta = 1$ si $n$ est impair et $\delta = 2$ si $n$ est pair
		\item[-] Lorsque $F = \R$ alors
		$ H^{1}(\Gamma, U_{E/F}(n)_{ad}) = \Bigl\{ \{p,q\} | 0 \leq p,q \leq n, \ p+q = n \Bigr\} $
	\end{enumerate}
	
	\textbf{Une forme intérieure pure} est un couple $(\varrho, z) : G \longrightarrow G_1 $ où $\varrho : G \longrightarrow G_1$ est une forme intérieure et $z$ est un cocycle dans $Z^1(\Gamma, G)$ tels que $ \varrho^{-1}\sigma(\varrho) = \text{Ad}(z_{\sigma})$. %Un isomorphisme entre deux formes intérieures pures $ (\varrho_1, z_1) : G \longrightarrow G_1 $ et $ (\varrho_2, z_2) : G \longrightarrow G_2 $ consiste en la donnée d'un couple $(f, g)$ dont $f : \varrho_1 \longrightarrow \varrho_2$ est un isomorphisme de formes intérieures et $g \in G$ un élément satisfaisant les conditions $ \varrho_2^{-1} \circ f \circ \varrho_1 = \text{Ad}_g $ et $z_2(\sigma) = g z_1(\sigma) \sigma(g)^{-1}$.
	L'application $(\varrho, z) \longmapsto z $ établit alors une bijection entre l'ensemble des classes d'isomorphismes de formes intérieures pures et $ H^1(\Gamma, G)$.

	On aura besoin de la notion de \textbf{forme intérieure étendue}. La définition étant technique, on donne seulement une description explicite dans le cas qui nous intéresse. En fait, dans \cite{Kot14}, Kottwitz a construit un ensemble $B(F, G)$ pour $F$ un corps local ou global ainsi qu'un sous-ensemble $B(F, G)_{bs}$ contenant les éléments basiques. Les classes d'isomorphisme des \textbf{formes intérieures étendues} de $G$ sont en bijection avec $B(F,G)_{bs}$ (\cite{KMSW} page 16, ligne 23). Par la suite, on essaie de caractériser $B(F,G)_{bs}$.
	
	Pour $F$ un corps local il y a une application canonique :
	\begin{equation} \phantomsection \label{itm: kappa}
		\kappa_G : B(F, G)_{bs} \longrightarrow X^{*}(Z(\widehat{G})^{\Gamma})
	\end{equation}
	qui est une bijection si $F$ est un corps $p$-adique.
	
	Lorsque $F$ est un corps global, pour chaque place $v$ de $\overline{F}$, il y a un morphisme de localisation $ B(F, G) \longrightarrow B(F_v, G)$ qui préserve le caractère d'être basique.
	
	Si l'on choisit une place de $\overline{F}$ sur chaque place de $F$, les morphismes de localisation donnent un morphisme :
	\begin{equation} \phantomsection \label{itm: etendue ker}
		B(F, G)_{bs} \longrightarrow \prod_{v} B(F_v, G)_{bs}
	\end{equation}
	où $ \prod $ désigne l'ensemble des éléments dans le produit direct dont les composantes à presque toutes les places $v$ sont égales à l'élément neutre de $B(F_v, G)_{bs}$. On peut montrer que le noyau de (\ref{itm: etendue ker}) est en bijection avec $ \ker^1 (F, G)$. De plus, l'image de (\ref{itm: etendue ker}) est égale au noyau de la composition suivante :
	\begin{equation} \phantomsection \label{itm : etendue image }
		\prod_{v}  B(F_v, G)_{bs} \xrightarrow{(\ref{itm: kappa})} \bigoplus_v X^{*}(Z(\widehat{G})^{\Gamma_v}) \xrightarrow{\sum} X^{*}(Z(\widehat{G})^{\Gamma}). 
	\end{equation}

	\begin{exemple} \phantomsection \label{itm : étendre} (\cite{KMSW} sec. 0.3.3) \textbf{Formes intérieures étendues de groupes unitaires}
		
		Pour $U_{E/F}(n)$ un groupe unitaire en $n$ variables on a : $ X^{*}(Z(\widehat{U_{E/F}(n)})^{\Gamma}) \simeq \Z / 2\Z $.
		
		On suppose tout d'abord que $F$ est un corps local et dans ce cas-là on a une bijection  
		\[
		H^1(\Gamma, U_{E/F}(n)) \longrightarrow B(F, U_{E/F}(n))_{bs}. 
		\]
		\begin{enumerate}
			\item[$\bullet$] Lorsque $F$ est $p$-adique on a :
			\[
			H^1(\Gamma, U_{E/F}(n)) \simeq B(F, U_{E/F}(n))_{bs} \simeq \Z / 2\Z
			\]
			\item[$\bullet$] Lorsque $F = \R$, on a $ H^{1}(\Gamma, U_{\C/ \R}(n)) = \{ (p,q) | 0 \leq p,q \leq n, \ p+q = n \} $ et $ H^{1}(\Gamma, U_{\C / \R}(n)_{ad}) $ est le quotient de cet ensemble par la relation $(p, q) \sim (q, p)$. Le morphisme (\ref{itm: kappa}) est donné par $ H^{1}(\Gamma, U_{\C/ \R}(n)) \longrightarrow \Z / 2\Z $, $(p, q) \longmapsto ([\frac{n}{2}] + q) \ \text{mod} \ 2$.
		\end{enumerate}
		
		De plus pour le groupe linéaire général, l'application (\ref{itm: kappa}) devient un isomorphisme 
		\[
		B(F, GL_n)_{bs} \xrightarrow{\sim} \Z.
		\]
		%On considère le diagramme suivant : 
		%\begin{center}
		%	\begin{tikzpicture}[scale = 0.8]
		%	\draw (-1,0) node {$H^1(\Gamma, U_{E/F}(n))$};
		%	\draw (8,0) node {$H^1(\Gamma, U_{E/F}(n)_{ad})$};
		%	\draw (-1,-2) node {$\Z / 2 \Z$};
		%	\draw (8,-2) node {$\Z / \delta \Z$};
		%	\draw [->] (-1,-0.5) -- (-1,-1.5);
		%	\draw [->] (8, -0.5) -- (8,-1.5);
		%	\draw [->] (1,0) -- (6,0);
		%	\draw [->] (1, -2) -- (6, -2);
		%	\end{tikzpicture}
		%\end{center}
		%où $\delta = 2$ si $n$ est pair et $\delta = 1$ si $n$ est impair. 
		
		%Le morphisme horizontal en haut est toujours surjectif. Lorsque $F$ est un corps $p$-adique, les morphismes verticaux sont des bijections. 
		Pour $E$ un corps de nombres CM dont $F$ est le sous-corps totalement réel, on peut donc montrer que (\ref{itm: etendue ker}) est injectif (\cite{KMSW}, page 18 ligne 2). L'ensemble $B(F, U_{E/F}(n))_{bs}$ s'identifie avec le noyau de (\ref{itm : etendue image }). D'après la discussion pour les groupes linéaires et unitaires locaux, on a la description suivante. 
		
		Pour chaque place $v$ de $F$, on note $\Xi_v$ la classe d'isomorphisme des formes intérieures étendues de $U_{E/F}(n)$ dont $a_v$ est l'invariant. On a alors
		$a_v \in \Z$  si $v$ est finie et décomposée ;
		$a_v \in \Z / 2 \Z$ si $v$ est finie et inerte ;
		$a_v \in \Z / 2 \Z$ si $v$ est réelle.
		\begin{proposition} \phantomsection \label{itm : forme intérieure pure} (\cite{KMSW} section 0.3.3)
			La collection $(\Xi_v)_v$ est dans l'image de (\ref{itm: etendue ker}), i.e la localisation d'une forme intérieure étendue globale si et seulement si $a_v = 0$ pour presque tout $v$ et la somme des images de $a_v$ mod $2$ est égale à $0$ mod $2$.
		\end{proposition}	
	\end{exemple}

	\subsection{Formalisme des paramètres} \label{itm: centra}
	%\subsubsection{Formalisme des paramètres locaux} \label{itm: *}
	On commence par considérer le cas d'un corps local $F$. Considérons le groupe de Langlands local $L_F := W_F$ si $F$ est archimédien et $W_F \times SU(2)$ dans le cas non archimédien. On pose également $ \prescript{L}{}{G} = \widehat{G} \rtimes W_F $ comme un groupe topologique où $\widehat{G}$ est le groupe dual de Langlands de $G$.
	\begin{definition}
		Un $L$-paramètre local pour un groupe réductif connexe $G$ défini sur $F$, est un morphisme continu $\phi : L_F \longrightarrow \prescript{L}{}{G}$ qui commute avec les projections canoniques de $L_F$ et $\prescript{L}{}{G}$ sur $W_F$ et tel que $\phi$ envoie les éléments semisimples sur des éléments semisimples.
	\end{definition}
	Deux $L$-paramètres sont équivalents s'ils sont conjugués par un élément de $\widehat{G}$. On note $\Phi(G)$ l'ensemble des classes d'équivalences de $L$-paramètres.
	\begin{notation}
		Soit $\phi \in \Phi(G)$ un $L$-paramètre.
		\begin{enumerate}
			\item[-] $\phi$ est borné si son image dans $^{L}G$ se projette dans un sous-ensemble relativement compact de $\widehat{G}$.
			\item[-] $ \phi $ est discret (ou de carré intégrable) si son image n'est contenue dans aucun sous-groupe parabolique propre de $^{L}G$.
		\end{enumerate}
	\end{notation} 
	On note $\Phi_{\text{bdd}}(G)$ \big(resp. $\Phi_2(G)$\big) le sous-ensemble des $L$-paramètres bornés (resp. discrets). On considérera aussi l'ensemble $\Phi_{\text{2, bdd}} := \Phi_{\text{bdd}}(G) \cap \Phi_2(G)$. On note  $\Pi_{\text{temp}}(G)$ (resp. $\Pi_2(G)$) l'ensemble des représentations tempérées (resp. essentiellement de carré intégrable) de $G(F)$. De manière similaire on pose $ \Pi_{2, \text{temp}} := \Pi_2 (G) \cap \Pi_{\text{temp}} (G)$ l'ensemble des représentations de carré intégrable.
	
	On aura besoin de la notion de $A$-paramètres qui joueront le rôle des composantes locales dans la classification globale.
	
	\begin{definition}
		Un $A$-paramètre local pour un groupe réductif connexe $G$ défini sur $F$ est un morphisme continu $\psi : L_F \times SU(2) \longrightarrow \prescript{L}{}{G}$ tel que $\psi |_{L_F}$ est un $L$-paramètre borné.  
	\end{definition}   
	Comme pour les $L$-paramètres, la condition d'équivalence entre les $A$-paramètres est définie par $\widehat{G}$-conjugaison. On note $\Psi(G)$ l'ensemble des classes d'équivalences de $A$-paramètres. On note également $\Psi^{+}(G)$ l'ensemble des classes d'équivalences de morphismes continus $\psi$ comme ci-avant mais où $ \psi |_{L_F} $ n'est pas nécessairement borné. Un $A$-paramètre $\psi$ (ou $\psi \in \Psi^{+}(G)$) est générique si $ \psi |_{SU(2)}$ est trivial. 
	
	%\textbf{Paramètres locaux pour les groupes linéaires généraux :} \text{} \\
	Donnons une description plus en détails de ces notions pour $G = GL(n)$. Afin d'alléger les notations, on notera $\Phi (n) := \Phi(GL(n))$, $\Pi (n) := \Pi(GL(n))$ et de même pour les autres ensembles de représentations. On posera $\Phi_{\text{sim}}(n) := \Phi_{2}(n)$. La correspondance de Langlands locale peut s'écrire de manière informelle comme suit.
	\begin{théorème} \cite{Lang} \cite{HT01}, \cite{Hen00}
		Il y a une unique bijection <<arithmétique>> $ \phi \longmapsto \pi $ de $\Phi(n)$ dans $\Pi (n)$ dite de Langlands locale qui respecte en particulier les sous-ensembles suivants
		\[
		\Phi_{\text{sim,bdd}} (n) \quad \subset \quad \Phi_{\text{bdd}} (n) \quad \subset \quad \Phi (n)
		\] 
		\[
		\Pi_{2, \text{temp}} (n) \quad \subset \quad \Pi_{\text{temp}}(n) \quad \subset \quad \Pi (n)
		\]
	\end{théorème}   
	\begin{remarque}
		<<Arithmétique>> signifie respecter facteurs $L$ et $\epsilon$ de paires. 
	\end{remarque}
	%\begin{definition}
	%	On utilise également une suite d'ensembles des $A$-paramètres comme suit 
	%	\[
	%	\Psi_{\text{cusp}} (n) \ \subset \ \Psi_{\text{sim}} (n) \ \subset \ \Psi (n) \ \subset \Psi^{+}_{\text{unit}} (n) \ \subset \ \Psi^{+} (n)
	%	\]
	%	où 
	%	\begin{enumerate}
	%		\item[-] $\Psi_{\text{cusp}} (n)$ est l'ensemble des représentations irréductibles unitaires de $n$ dimension de $L_F$.
	%		\item[-] $\Psi_{\text{sim}} (n)$ consiste en des représentation de $n$ dimension de $L_F \times SU(2)$ de la forme $ \mu \otimes \nu $ où $ \mu \in \Psi_{\text{sim}} (m) $ et $ \nu = \text{Sym}^{d-1} $ tels que $md = n$.
	%		\item[-] $\psi \in \Psi(n)$ admet une décomposition : $ \psi = \oplus_{i \in I} \psi_i $ où $ \psi_i \in \Psi_{\text{sim}} (n_i) $ tels que $ n = \sum n_i $.
	%		\item[-] $ \psi \in \Psi^{+} (n) $ admet une décomposition $ \psi = \oplus_{i \in I} (\psi_i \otimes \chi_i) $ où $ \psi_i \in \Psi_{\text{sim}}(n_i) $ et $\chi : L_F \longrightarrow \C^{\times}$ un quasi-caractère.
	%		\item[-] $\Psi^{+}_{\text{unit}} (n)$ consiste en des $\psi \in  \Psi^{+} (n)$ qui sont irréductibles et unitaires.
	%	\end{enumerate}
	%\end{definition}
	%\textbf{Paramètre locaux pour les groupes unitaires} \phantomsection \label{itm: changement de base} \text{}\\ 
	Pour les groupes unitaires, les $L$-paramètres et $A$-paramètres dans le cas quasi-déployé $U_{E/F} (n)$ sont reliés à ceux du groupe $GL(n)$ via un morphisme de changement de base. 
	
	Pour chaque $\kappa \in \{ \pm 1 \}$, on peut définir un ensemble $ \mathcal{Z}_E^{\kappa} $ (cf \cite{Mok} p.$8$, (2.1.5), (2.1.6)). Choisissons $\chi_{\kappa} \in \mathcal{Z}_E^{\kappa} $, on a un morphisme de changement de base (cf \cite{Mok} p.$9$) : 
	\begin{equation} \phantomsection \label{itm: changement de base}
		\eta_{\chi_{\kappa}} : \prescript{L}{}{U_{E/F}(n)} \longrightarrow  \prescript{L}{}{G_{E/F}(n)}	
	\end{equation}
	%\[
	%\eta_{\chi_{\kappa}} : \prescript{L}{}{U_{E/F}(n)} \longrightarrow  \prescript{L}{}{G_{E/F}(n)}
	%\]
	où $G_{E/F}(n) = \Res_{E/F}(GL_E(n))$.
	
	On obtient donc une application :
	\begin{align*} 
		\eta_{\chi_{\kappa}, *} : \Phi (U_{E/F}(n)) &\longrightarrow \Phi(G_{E/F}(n)) = \Phi_E (n) \\
		\phi & \longmapsto  \eta_{\chi_{\kappa}} \circ \phi
	\end{align*}
	et de même pour les $A$-paramètres
	\begin{align*}
		\eta_{\chi_{\kappa}, *} :  \Psi (U_{E/F}(n)) & \longrightarrow  \Psi(G_{E/F}(n)) = \Psi_E (n) \\
		\psi & \longmapsto  \eta_{\chi_{\kappa}} \circ \psi.
	\end{align*}
	
	Pour $\phi \in \Phi (U_{E/F}(n))$, le $L$-paramètre dans $\Phi_E (n)$ qui correspond à $\eta_{\chi_{\kappa}} \circ \phi$, est $\phi_{| L_E} \otimes \chi_{\kappa} $. En particulier, si $\kappa = 1$ et $\chi_{\kappa} = 1$ alors $\eta_{\chi_{\kappa}} \circ \phi$ est juste la restriction de $\phi$ sur $L_E$.
	
	Les applications $\eta_{\chi_{\kappa}, *}$ ci-dessus sont injectives et on cherche à décrire leurs images. Soit $c$ un morphisme dans $W_F$ relevant le morphisme non trivial de $\text{Gal(E/F)}$. Il y a une inclusion $ W_F \hookrightarrow L_F $ et on peut faire agir $c$ sur $L_E$ par conjugaison. On définit $\rho^c(g) := \rho (cgc^{-1})$ et de plus on pose $ \rho^* := (\rho^c)^{\vee} $ pour toute représentation $ \rho : L_E \longrightarrow GL(n, \C) $.
	\begin{definition}
		Un paramètre $\psi \in \Psi_E(n)$ est appelé conjugué auto-dual si $\psi = \psi^*$. Les ensembles des paramètres conjugués auto-duaux dans $\Phi_E(n)$ et $\Psi_E(n)$ sont notés $\widetilde{\Phi}_E(n)$ et $\widetilde{\Psi}_E(n)$ respectivement. 
	\end{definition} 
	Désormais nous notons simplement $\Phi(n)$ au lieu de $\Phi_E(n)$ (et similairement pour les autres ensembles de paramètres) lorsque le contexte est clair.
	%Chaque paramètre conjugué autodual admet une décomposition sous forme suivante :  
	%\[
	%\psi^n = \left( \bigoplus_{i \in I_{\psi^n}} l_i \psi_i^{n_i} \right) \oplus \left( \bigoplus_{j \in J_{\psi^n} } l_j(\psi_j^{n_j} \oplus (\psi_{j} ^{n_{j}})^* ) \right)
	%\]
	%où chaque $ \psi_i \in \widetilde{\Psi} (n_i)  $, chaque $ \psi_j \ncong \psi_j^* \in \Psi (n_j) $ et $ \sum_{i \in I} n_i + 2 \sum_{j \in J} n_j = n$.
	
	\begin{definition}
		Un paramètre auto-dual $\rho$ est de parité $\eta$ où $\eta = \pm 1$ s'il existe une forme bilinéaire non dégénérée $B \langle \cdot | \cdot \rangle$ sur $V$ de sorte que $B\langle \rho^c(g)x, \rho(g)y \rangle = B \langle x, y \rangle $ et que $B \langle x,y \rangle = \eta B \langle y, \rho(c^2)x \rangle $.
	\end{definition}
	Il est clair que les images des applications $\eta_{\chi_{\kappa}, *}$ consistent en des paramètres conjugués auto-duaux. 
	
	\begin{proposition} (\cite{KMSW} lemme 1.2.5) \phantomsection \label{itm : changement de base}
		Les applications $\eta_{\chi_{\kappa}, *}$ induisent une bijection entre la pré image de $ \Phi_{\text{sim}}(n) $ (resp. de $\Psi_{\text{sim}} (n)$) et l'ensemble des paramètres conjugués auto-duaux avec parité $ (-1)^{n-1} \kappa $ dans $ \Phi_{\text{sim}} (n) $ (resp. dans $\Psi_{\text{sim}} (n)$).
	\end{proposition}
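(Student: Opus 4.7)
La stratégie consiste à expliciter le morphisme $\eta_{\chi_{\kappa}}$ au niveau des $L$-groupes, puis à analyser les contraintes d'auto-dualité conjuguée et de parité que la factorisation impose sur les paramètres.

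D'abord, je déroulerais la structure du morphisme $\eta_{\chi_{\kappa}}$ : le dual $\widehat{G_{E/F}(n)}$ s'identifie à $GL_n(\C) \times GL_n(\C)$, sur lequel le générateur $c$ de $\Gal(E/F)$ agit en permutant les deux facteurs à l'involution de Chevalley près ; tandis que $\widehat{U_{E/F}(n)} = GL_n(\C)$ porte la même involution tordue. Le morphisme $\eta_{\chi_{\kappa}}$ envoie alors $g \in GL_n(\C)$ diagonalement dans le produit, avec une torsion par $\chi_{\kappa}$ sur la partie $W_F$. Ceci donne immédiatement : pour tout $\phi \in \Phi(U_{E/F}(n))$, la restriction de $\eta_{\chi_{\kappa}} \circ \phi$ à $L_E$ vaut $\phi|_{L_E} \otimes \chi_{\kappa}$, ce qui est déjà signalé dans l'excerpt.

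Ensuite, je prouverais que l'image est contenue dans les paramètres conjugués auto-duaux de parité $(-1)^{n-1}\kappa$. L'auto-dualité conjuguée découle directement de la structure du groupe dual unitaire et de la compatibilité avec l'action de Galois ci-dessus. Pour le signe de parité, je traduirais l'existence d'un relèvement dans $\prescript{L}{}{U_{E/F}(n)}$ en l'existence d'une forme bilinéaire non dégénérée $B$ sur l'espace de représentation, invariante au sens de la définition rappelée. La valeur $B(x, (\eta_{\chi_{\kappa}} \circ \phi)(c^2)y)$ se calcule alors en utilisant la normalisation $\chi_{\kappa}|_{F^{\times}} = \omega_{E/F}^{\kappa}$, et donne exactement le signe annoncé : le facteur $\kappa$ provient du choix de $\chi_{\kappa}$ et le facteur $(-1)^{n-1}$ des conventions standards pour la forme hermitienne définissant $U(n)$.

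Pour la bijectivité sur les paramètres simples, je raisonnerais par construction d'une réciproque. Étant donné $\rho \in \Phi_{\text{sim}}(n)$ irréductible, conjuguée auto-duale de parité $(-1)^{n-1}\kappa$, la forme bilinéaire $B$ associée fournit un relèvement canonique $\tilde{\rho} : L_F \to \prescript{L}{}{U_{E/F}(n)}$ qui prolonge $\rho \otimes \chi_{\kappa}^{-1}$. L'irréductibilité de $\rho$ garantit que le centralisateur dans $GL_n(\C)$ est réduit aux scalaires, ce qui assure l'unicité du relèvement modulo $\widehat{U}(n)$-conjugaison. Le cas des $A$-paramètres simples se traite de la même manière, en décomposant $\psi = \rho \boxtimes \nu$ avec $\nu$ représentation irréductible de $SU(2)$ et en appliquant le raisonnement précédent au facteur $\rho$. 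Le point le plus délicat sera le calcul précis du signe de parité : il faut suivre soigneusement les normalisations de $\eta_{\chi_{\kappa}}$ et du relèvement choisi dans $W_F$ du générateur $c$ de $\Gal(E/F)$, afin que les signes se combinent correctement pour donner $(-1)^{n-1}\kappa$. Une fois ce calcul mené à bien, la bijection découle immédiatement de l'existence et de l'unicité du relèvement.
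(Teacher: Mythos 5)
Le texte de l'article ne contient aucune d\'emonstration de cette proposition : elle est reprise telle quelle de \cite{KMSW}, lemme 1.2.5 (voir aussi Mok, lemme 2.2.1), et sert uniquement de rappel. Il n'y a donc pas d'argument interne au papier auquel comparer le v\^otre ; la seule question est de savoir si votre esquisse reconstitue correctement la preuve de la r\'ef\'erence. Sur ce point, votre strat\'egie est bien la bonne et co\"{\i}ncide avec l'argument standard (Gan--Gross--Prasad, Mok, KMSW) : expliciter $\eta_{\chi_{\kappa}}$, traduire la factorisation par $\prescript{L}{}{U_{E/F}(n)}$ en l'existence d'une forme bilin\'eaire $B$ v\'erifiant les deux identit\'es de la d\'efinition de la parit\'e, puis utiliser le lemme de Schur pour un param\`etre simple afin d'obtenir l'unicit\'e de $B$ \`a scalaire pr\`es, donc la bonne d\'efinition de la parit\'e et l'unicit\'e du rel\`evement \`a $\widehat{U}(n)$-conjugaison pr\`es.

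En revanche, telle qu'elle est r\'edig\'ee, votre proposition est un plan plut\^ot qu'une preuve : le calcul du signe $(-1)^{n-1}\kappa$, que vous signalez vous-m\^eme comme le point le plus d\'elicat et que vous reportez, est pr\'ecis\'ement tout le contenu du lemme. Il faut le mener : le facteur $(-1)^{n-1}$ provient de la matrice $J$ d\'efinissant l'involution $g \mapsto J\,\prescript{t}{}{g}^{-1}J^{-1}$ sur $\widehat{U}=GL_n(\C)$, laquelle v\'erifie $\prescript{t}{}{J} = (-1)^{n-1}J$, et le facteur $\kappa$ du fait que tordre par $\chi_{\kappa}$ multiplie la parit\'e par le signe de $\chi_{\kappa}$ (c'est-\`a-dire par $\kappa$, vu la normalisation $\chi_{\kappa}|_{F^{\times}} = \omega_{E/F}^{(1-\kappa)/2}$ de $\mathcal{Z}_E^{\kappa}$) ; il faut aussi v\'erifier que $\tilde{\rho}(c)^2 = \rho(c^2)$ pour que le rel\`evement soit bien un homomorphisme, ce qui est exactement la condition de parit\'e. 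Deux autres points sont pass\'es sous silence : pour l'unicit\'e, les deux rel\`evements possibles (correspondant \`a $A$ et $-A$) sont conjugu\'es par un scalaire car $-1$ est un carr\'e dans $\C^{\times}$ ; et pour les $A$-param\`etres $\psi = \rho \boxtimes \nu_d$, la r\'eduction au facteur $\rho$ exige la multiplicativit\'e de la parit\'e, la repr\'esentation $\nu_d$ de $SU(2)$ \'etant autoduale de signe $(-1)^{d-1}$, de sorte que la parit\'e de $\psi$ (de dimension $n = md$) vaut celle de $\rho$ multipli\'ee par $(-1)^{d-1}$ ; il faut v\'erifier que cela redonne bien $(-1)^{n-1}\kappa$. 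Une fois ces v\'erifications faites, votre argument est complet.
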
 
	\begin{notation}
		On pose $ \Phi_{\text{sim}}(U_{E/F}(n)) := \eta_{\chi_{\kappa} *}^{-1} (\Phi_{\text{sim}}(n)) $ \Big(resp.$ \Psi_{\text{sim}}(U_{E/F}(n)) := \eta_{\chi_{\kappa} *}^{-1} (\Psi_{\text{sim}}(n)) $\Big), l'ensemble des $L$-paramètres simples \big(resp. l'ensemble des $A$-paramètres simples\big).
	\end{notation}
	Pour chaque $A$-paramètre $\psi \in \Psi^{+}(G)$ on définit des groupes centralisateurs comme ci-dessous, qui jouent un rôle important dans la classification locale et globale:
	\[
	S_{\psi} := \text{Cent}(\text{Im}\psi, \widehat{G}), \quad \overline{S}_{\psi} := S_{\psi} / Z(\widehat{G})^{\Gamma}, \quad \mathcal{S}_{\psi} := \pi_0 (S_{\psi}),
	\]  
	\[
	\overline{\mathcal{S}}_{\psi} := \pi_0 (\overline{S}_{\psi}), \quad S_{\psi}^{\text{rad}} := (S_{\psi} \cap \widehat{G}_{\text{der}})^0, \quad S_{\psi}^{\natural} := S_{\psi} / S_{\psi}^{\text{rad}}.
	\]
	
	On va maintenant donner une description explicite du centralisateur $S_{\psi} $ pour $\psi \in \Psi^{+}(U_{E/F}(n))$. Posons $ \psi^n := \eta_{\chi_{\kappa}, *} (\psi) $ qui s'écrit sous la forme
	\[
	\psi^n = \bigoplus_{k \in K} \ell_k \psi_k^{n_k}
	\]
	où les $ \psi_k^{n_k} $ sont des représentations irréductibles deux à deux non isomorphes de $ L_E $. Puisque $ \psi^n $ est auto-dual, il y a une involution $ * $ de $K$ telle que $ \psi_{k^*}^{n_{k^*}} = (\psi_k^{n_k})^* $. Alors $ \psi^n $ s'écrit sous la forme :
	\[
	\psi^n = \left( \bigoplus_{i \in I_{\psi^n}} \ell_i \psi_i^{n_i} \right) \oplus \left( \bigoplus_{j \in J_{\psi^n} } \ell_j(\psi_j^{n_j} \oplus (\psi_{j} ^{n_{j}})^* ) \right).
	\]
	où $ I_{\psi^n} $ est l'ensemble des points fixes dans $K$ de l'involution $*$. On voit alors que : 
	\[
	S_{\psi} = \prod_{i \in I^{+}_{\psi^n}} O(\ell_i, \C) \times \prod_{i \in I^{-}_{\psi^n}} Sp( \ell_i, \C ) \times \prod_{j \in J_{\psi^n}} GL (\ell_j, \C)
	\]
	où $ i \in I_{\psi^n} $ appartient à $I^{+}_{\psi^n}$ si la parité de ${\psi_i}$ est égale à $\kappa (-1)^{n-1}$ et à $I^{-}_{\psi^n}$ sinon.
	
	D'autre part le groupe $Z(\widehat{G})^{\Gamma} = \{ \pm 1 \}$ est envoyé diagonalement dans le membre de droite ci-dessus de sorte que: 
	\[
	\mathcal{S}_{\psi} \simeq S_{\psi}^{\natural} \simeq (\Z / 2 \Z)^{| I^{+}_{\psi^n} |} \quad \text{et} \quad \overline{\mathcal{S}}_{\psi} \simeq \left\lbrace \begin{array}{cc}
		(\Z / 2 \Z)^{| I^{+}_{\psi^n} |} & \quad \text{si} \ \forall i \in I^{+}_{\psi^n} \ 2 | \ell_i, \\
		(\Z / 2 \Z)^{| I^{+}_{\psi^n} | - 1} & \quad \text{sinon}.
	\end{array} \right.
	\]
	
	%\textbf{Paramètres discrets pour les groupes unitaires réels}
	
	Dans le cas où $F = \R$, $E = \C$, $L_{\C} = W_{\C} = \C^{\times}$, rappelons la description des paramètres discrets. Comme précédemment on a un morphisme de changement de base 
	\begin{equation}
		\eta_{\chi_{\kappa}} : \prescript{L}{}{U}_{\C / \R} (n) \longrightarrow \prescript{L}{}{G}_{\C / \R} (n)
	\end{equation}
	via lequel on associe à $\phi \in \Phi(U_{\C / \R}(n))$ un paramètre $\phi^n = \eta_{\chi_{\kappa} *} \phi : L_{\C} \longrightarrow GL_n(\C)$.
	
	Soit $ \phi \in \Phi_2(U_{\C / \R}(n)) $ alors $\phi^n$ s'écrit sous la forme 
	$
	\phi^n = \eta_1 \oplus \cdots \oplus \eta_n
	$
	où les $\eta_i$ sont des caractères auto-duaux deux à deux disjoints de $\C^{\times}$. En général, un tel caractère est de la forme $\eta : z \longmapsto (z / \overline{z})^a$ avec $a \in \frac{1}{2} \Z$. Si $\eta_i(z) = (z/ \overline{z})^{a_i} $ alors on introduit le $n$-tuple 
	$
	\mu_{\phi^n} := (a_1, \cdots a_n)
	$
	appelé le caractère infinitésimal de $\phi^n$. Si $ \chi_{\kappa}(z) = (z / \overline{z})^c $ alors le caractère infinitésimal de $\phi$ est donné par la formule
	$
	\mu_{\phi} := (b_1, \cdots b_n)
	$
	où $b_i = a_i - c$.
	\begin{definition} \phantomsection \label{itm: suffisament régulier}
		Notons $ d(\mu_{\phi}) = \text{min} \left\lbrace \mathop{\mathrm{min}}_i(b_i), \mathop{\mathrm{min}}_{i \neq j}(| b_i - b_j |) \right\rbrace $ et de même pour $\mu_{\phi^n}$. Le caractère infinitésimal de $\phi$ est suffisamment régulier si $ d(\mu_{\phi}) > 0 $.
	\end{definition} 
	%\subsection{Formalisme de paramètres globaux}
	Supposons désormais que $F$ est \textbf{un corps de nombres} et $E$ une extension quadratique de $F$. On va définir de manière formelle les notions de $L$-paramètre pour le groupe unitaire quasi déployé $ U_{E/F}(n)$ en partant des représentations cuspidales des groupes linéaires généraux.
	
	%\textbf{Paramètres globaux pour les groupes linéaires généraux} \text{} \\
	On commence par définir $\Psi_{\text{glb,sim}}(n)$ comme l'ensemble des produits tensoriels formels $ \pi \boxtimes \nu $ où $\pi$ est une représentation automorphe cuspidale de $ GL_m(\mathbb{A}_E) $ et $\nu$ une représentation algébrique de $SL_2(\C)$ de dimension $d$ et on demande de plus que $n = m \cdot d$. Plus généralement on définit $\Psi_{\text{glb}}(n)$ comme l'ensemble des objets sous forme d'une somme directe formelle non ordonnée 
	\[
	\pi = l_1 (\pi_1 \boxtimes \nu_1) \boxplus \cdots \boxplus l_r (\pi_r \boxtimes \nu_r)
	\]
	où $l_i \geq 1 $ est un entier, les $\pi_i \boxtimes \nu_i \in \Psi_{\text{glb,sim}}(n_i)$ sont deux à deux distincts et $ n = l_1 \cdot n_1 + \cdots + l_r \cdot n_r$.  
	
	Un paramètre global sera dit \textit{générique} si pour chaque facteur simple $ \pi_i \boxtimes \nu_i $, le facteur $\nu_i$ est la représentation triviale de $SL_2(\C)$. On note $ \Phi_{\text{glb}} (n) $ l'ensemble des paramètres globaux génériques. 
	
	%\textbf{Paramètres globaux pour les groupes unitaires quasi déployés} \text{} \\
	Soit $\pi$ une représentation automorphe cuspidale de $GL_m(\mathbb{A}_E)$, on pose $\pi^{*} := (\pi^c)^{\vee}$ où $\pi^c := \pi \circ c$ avec $c$ le morphisme de conjugaison de Galois de $E$ sur $F$ et $(\pi^c)^{\vee}$ est la contragrédiente de $\pi^c$. Plus généralement, si $ \pi = \pi_1 \boxtimes \nu_1 \in \Psi_{\text{glb,sim}} (n) $ alors on pose $ (\pi)^{*} := \pi_1^{*} \boxtimes \nu_1 $ et un $\pi \in \Psi_{\text{glb,sim}} (n)$ est dit autodual si $ \pi = (\pi)^{*} $.
	
	Maintenant si $ \pi = (l_1 \pi_1 \boxtimes \nu_1) \boxplus \cdots \boxplus (l_r \pi_r \boxtimes \nu_r) \in \Psi_{\text{glb}} (n) $, on dit que $\pi$ est autodual s'il existe une involution $ i \longmapsto i^{*} $ de $ \{ 1, \cdots , r \} $ telle que $(\pi_i \boxtimes \nu_i)^{*} = \pi_{i^{*}} \boxtimes \nu_{i^{*}}$ et $l_i = l_{i^*}$.
	
	L'ensemble des paramètres autoduaux (resp. autoduaux simples) de $\Psi_{\text{glb}}(n) $ (resp $ \Psi_{\text{glb, sim}}(n) $) sera noté   $\widetilde{\Psi}_{\text{glb}}(n)$ (resp $ \widetilde{\Psi}_{\text{glb,sim}}(n) $). %Soit $ \psi^n \in \widetilde{\Psi}(n) $ comme au-dessus. On peut écrire : %% on écrit $ K_{\psi^n} $ pour l'ensemble des indices $ \{ 1, \cdots, r \} $. On a alors une décomposition $ K_{\psi^n} = I_{\psi^n} \coprod J_{\psi^n} \coprod ( J_{\psi^n} )^* $ où $ I_{\psi^n} $ est l'ensemble des indices fixés par l'involution $ i \longleftrightarrow i^* $ et $ J_{\psi^n} $ l'ensemble des indices dont l'orbites par l'involution contient $2$ éléments. 
	%\begin{equation} \phantomsection 
	%\psi^n = (\op_{i \in I^{+}_{\psi^n}} l_i \psi_i^{n_i} ) \boxplus (\op_{i \in I^{-}_{\psi^n}} l_i \psi_i^{n_i} ) \boxplus ( \op_{j \in J_{\psi^n} } l_j(\psi_j^{n_j} \boxplus \psi_{j^{*}} ^{n_{j^*}} ) )
	%\end{equation}
	
	%On note $\{ K_{\psi^n} \} = I_{\psi^n} \coprod J_{\psi^n}$ % l'ensemble des orbites de $ K_{\psi^n} $ sous l'involution $ i \longleftrightarrow i^* $.
	%Pour $ i \in I_{\psi^n} $ on pose $ H_i := U_{E/F} (m_i) $ où $ \psi_i^{n_i} = \mu \boxtimes \nu $ avec $\mu$ un paramètre générique de $\widetilde{\Phi}_{\text{sim}} (m_i)$. De même si $ j \in J_{\psi^n} $ on pose $ H_j := \Res_{E/F} GL_{E}(m_j)$. Considérons le produit fibré suivant :
	%\[
	%\mathcal{L}_{\psi^n} = \prod_{k \in \{ K_{\psi^n} \}} (\prescript{L}{}{H}_k \longrightarrow W_F )
	%\]
	% où $\prescript{L}{}{H}_k$ est le Weil forme de $L$-groupe de $H_k$.
	On a besoin de la construction suivante qui nous servira de substitution au groupe de Langlands global: pour la construction détaillée, voir \cite{Mok} (p.22-23) et \cite{KMSW} (p.68).
	\begin{Construction} \cite{Mok}, \cite{KMSW}
		Soit $ \psi \in \widetilde{\Psi}_{glb}(n) $ comme ci-dessus, il existe un groupe $\mathcal{L}_{\psi}$ et un morphisme\footnote{On rappelle que $G_{E/F}(n) = \Res_{E/F}(GL_E(n))$.} :
		\[
		\widetilde{\psi} : \mathcal{L}_{\psi} \times SL_2(\C) \longrightarrow  \prescript{L}{}{G_{E/F}(n)}. 
		\]
	\end{Construction} 
	\begin{definition}
		On définit l'ensemble des paramètres $ \Psi (U_{E/F}(n), \eta_{\chi_{\kappa}} ) $ comme l'ensemble des couples $(\psi^n, \widetilde{\psi})$ où $ \psi^n \in \widetilde{\Psi}_{\text{glb}}(n) $ et $\widetilde{\psi} : \mathcal{L}_{\psi^n} \times SL_2(\C) \longrightarrow \prescript{L}{}{(U_{E/F}(n))} $ est un morphisme tel que $\widetilde{\psi^n} = \eta_{\chi_{\kappa}} \circ \widetilde{\psi}$. Un paramètre $ \psi = (\psi^n, \widetilde{\psi})$ est générique si $\psi^n$ l'est.	   
	\end{definition}
	Notons que $\widetilde{\psi}$ est entièrement déterminé par $\eta_{\chi_{\kappa}}$ et $ \widetilde{\psi^n}$. %\textit{ Afin d'alléger les notations, on note parfois $\psi^n$ le paramètre global $(\psi^n, \widetilde{\psi})$}.
	
	Comme auparavant on peut définir le centralisateur et ses variantes pour un paramètre $ \psi = (\psi^n, \widetilde{\psi}) $. On note $\epsilon_{\psi}$ le caractère d'Arthur de $\overline{\mathcal{S}}_{\psi}$, cf \cite{Arthur} p.$15$.
	
	%par les formules ci-dessous (où on note $ G^* := U_{E/F} (n) $). 
	%\[
	%S_{\psi} := \text{Cent}(\text{Im} \widetilde{\psi}, \widehat{G}^*), \quad \overline{S}_{\psi} := S_{\psi} / Z(\widehat{G}^*)^{\Gamma},
	%\]  
	%\[
	%\mathcal{S}_{\psi} := \pi_0 (S_{\psi}), \quad \overline{\mathcal{S}}_{\psi} := \pi_0 (\overline{S}_{\psi}),
	%\]
	%\[
	%S_{\psi}^{\text{rad}} := (S_{\psi} \cap \widehat{G}^{*}_{\text{der}})^0, \quad S_{\psi}^{\natural} := S_{\psi} / S_{\psi}^{\text{rad}}.
	%\]  
	Afin de décrire ces groupes en détails, on considère la décomposition de $ \psi^n $ sous la forme
	\begin{equation} \phantomsection \label{itm: forme normale}
		\psi^n = \Big(\op_{i \in I^{+}_{\psi^n}} l_i \psi^{n_i}_i \Big) \boxplus \Big(\op_{i \in I^{-}_{\psi^n}} l_i \psi^{n_i}_i \Big) \boxplus \Big( \op_{j \in J_{\psi^n} } l_j(\psi^{n_j}_j \boxplus \psi^{n_{j^*}}_{j^{*}} ) \Big)
	\end{equation}
	où $i \in I_{\psi^n}$ appartient à $I^{+}_{\psi^n}$ ou à $I^{-}_{\psi^n}$ selon la parité de $\psi^{n_i}_i$ (ce qui est lié à sa fonction L d'Asai). (\cite{KMSW} p.69). 
	
	On a alors :
	%Il y a une décomposition $ I_{\psi^n} = I^{+}_{\psi^n} \coprod I^{-}_{\psi^n} $ selon la parité des $\psi^{n_i}$ et on a :
	\[
	S_{\psi} = \prod_{i \in I^{+}_{\psi^n}} O(l_i, \C) \times \prod_{i \in I^{-}_{\psi^n}} Sp( l_i, \C ) \times \prod_{j \in J_{\psi^n}} GL (l_j, \C).
	\]
	
	D'autre part le groupe $Z(\widehat{G})^{\Gamma} = \{ \pm 1 \}$ est envoyé diagonalement dans le membre de droite ci-dessus. Alors on voit que (\cite{KMSW} page 69)
	\begin{equation} \phantomsection \label{itm : centra}
		\mathcal{S}_{\psi} \simeq S_{\psi}^{\natural} \simeq (\Z / 2 \Z)^{| I^{+}_{\psi} |} \quad \text{et} \quad \overline{\mathcal{S}}_{\psi} \simeq \left\lbrace \begin{array}{cc}
			(\Z / 2 \Z)^{| I^{+}_{\psi^n} |} & \quad \text{si} \ \forall i \in I^{+}_{\psi^n} \ 2 | l_i, \\
			(\Z / 2 \Z)^{| I^{+}_{\psi^n} | - 1} & \quad \text{sinon.}
		\end{array} \right.	
	\end{equation}
	\begin{notation}
		On pose $ \Psi_2 (U_{E/F}(n), \eta_{\chi_{\kappa}} ) $ le sous-ensemble des paramètres discrets, i.e de la forme $ \psi = (\psi^n, \widetilde{\psi}) $ où $ \forall i \in I^{+}_{\psi^n}, l_i = 1$ et $ \forall j \in I^{-}_{\psi^n} \bigcup J_{\psi^n}, l_j = 0$ \big(cf (\ref{itm: forme normale})\big).
	\end{notation}
	\textbf{Localisation d'un paramètre global} \textbf{} \\
	$\bullet$ On commence avec les groupes $GL(n)$. Considérons une représentation automorphe cuspidale $\pi$ de $GL(n)$ et $v$ une place de $F$. La correspondance de Langlands local associe à $\pi_v $ un $A$-paramètre générique $\psi_{\pi_v} \in \Psi^{+} (GL(n, F_v))$. Ce processus nous permet de définir une application de localisation 
	
	\[
	\begin{array}{ccc}
		\Psi_{\text{glb}} (n) & \longrightarrow & \Psi^{+}_v (n) \\
		l_1(\pi_1 \boxtimes \nu_1) \boxplus \cdots \boxplus l_r(\pi_r \boxtimes \nu_r)  & \longmapsto & l_1 (\psi_{(\pi_1)_v} \boxtimes  \nu_1) \boxplus \cdots \boxplus l_r (\psi_{(\pi_r)_v} \boxtimes  \nu_r).
	\end{array}
	\]
	$\bullet$ Considérons maintenant les groupes $U_{E/F}(n)$. Fixons $\kappa \in \{ \pm 1 \}$ avec un caractère $\chi_{+} \in \mathcal{Z}_E^{+}$ et $\chi_{-} \in \mathcal{Z}_E^{-}$. Soit $ \psi = (\psi^n, \widetilde{\psi}) \in \Psi(U_{E/F}(n), \eta_{\chi_{\kappa}})$ un paramètre global.% Si $v$ est une place de $F$ on a le résultat suivant.
	%Cette application peut s'étendre de manière unique (on remplace $\boxplus$ par $\oplus$) en une application de $A$-paramètre 
	%\[
	%\Psi(n) \longrightarrow \Psi^{+}_v (n)
	%\]
	\begin{proposition} \phantomsection \cite{KMSW} prop 1.3.3.
		Pour chaque $\psi \in \Psi(U_{E/F}(n), \eta_{\chi_{\kappa}}) $, il existe un paramètre local $ \psi_v \in \Psi^{+}(U_{E_v / F_v}(n)) $ tel que $\psi^n_v = \eta_{\chi_{\kappa}} \circ \psi_v $
		\[
		\psi^n_v =  L_{F_v} \times SU(2) \xrightarrow{\psi_v} \prescript{L}{}{U}_{E_v / F_v}(n) \xrightarrow{\eta_{\chi_{\kappa}}} \prescript{L}{}{G}_{E_v / F_v} (n).
		\]
		Le paramètre local $\psi_v$ est unique à isomorphisme près.
	\end{proposition}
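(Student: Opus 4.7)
La d\'emonstration proc\`ede en d\'efinissant $\psi_v$ directement \`a partir de la d\'ecomposition globale (\ref{itm: forme normale}) de $\psi^n$, puis en v\'erifiant que le param\`etre r\'esultant se factorise \`a travers $\eta_{\chi_\kappa}$. \'Ecrivons
\[
\psi^n = \Big(\op_{i \in I^{+}_{\psi^n}} l_i \psi^{n_i}_i \Big) \boxplus \Big(\op_{i \in I^{-}_{\psi^n}} l_i \psi^{n_i}_i \Big) \boxplus \Big( \op_{j \in J_{\psi^n} } l_j(\psi^{n_j}_j \boxplus \psi^{n_{j^*}}_{j^{*}} ) \Big),
\]
o\`u chaque facteur simple $\psi^{n_i}_i = \pi_i \boxtimes \nu_i$ correspond \`a une repr\'esentation automorphe cuspidale $\pi_i$ de $GL_{m_i}(\A_E)$ et \`a une repr\'esentation $\nu_i$ de $SL_2(\C)$. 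La correspondance de Langlands locale pour $GL$ associe \`a chaque composante locale $(\pi_i)_v$ un $L$-param\`etre $\phi_{(\pi_i)_v}$, et l'on pose $\psi^n_v := \bigoplus_i l_i \bigl(\phi_{(\pi_i)_v} \boxtimes \nu_i\bigr) \in \Psi^+_v(n)$.

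Il reste \`a v\'erifier que $\psi^n_v$ provient par $\eta_{\chi_\kappa, *}$ d'un $\psi_v \in \Psi^+(U_{E_v/F_v}(n))$. D'apr\`es la proposition \ref{itm : changement de base} appliqu\'ee facteur par facteur, cela \'equivaut \`a deux conditions : (a) chaque facteur globalement conjugu\'e autodual $\psi^{n_i}_i$ (pour $i \in I^{+}_{\psi^n} \cup I^{-}_{\psi^n}$) se localise en un param\`etre conjugu\'e autodual de parit\'e appropri\'ee ; (b) les facteurs index\'es par $J_{\psi^n}$ se localisent en des paires $\phi \oplus \phi^*$. La conservation de la conjugaison autoduale d\'ecoule de la compatibilit\'e de la correspondance de Langlands locale avec la contragr\'ediente et avec l'automorphisme de conjugaison galoisienne de $E/F$. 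La conservation de la parit\'e est plus subtile : si $\pi_i$ est globalement conjugu\'ee autoduale de parit\'e $\eta_i \in \{\pm 1\}$, d\'etermin\'ee par la pr\'esence d'un p\^ole en $s=1$ de l'une ou l'autre des fonctions L d'Asai, alors chaque localisation $(\pi_i)_v$ est conjugu\'ee autoduale de m\^eme parit\'e locale $\eta_i$, r\'esultat classique dont la preuve figure notamment dans \cite{Mok} chapitre 2.

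L'unicit\'e de $\psi_v$ \`a conjugaison pr\`es d\'ecoule imm\'ediatement de l'injectivit\'e locale de $\eta_{\chi_\kappa, *}$ (toujours la proposition \ref{itm : changement de base}) : deux rel\`evements locaux donnant lieu au m\^eme $\psi^n_v$ sont automatiquement $\widehat{U}_{E_v/F_v}(n)$-conjugu\'es. Le principal obstacle est la v\'erification rigoureuse de la compatibilit\'e globale-locale de la parit\'e pour chaque facteur simple conjugu\'e autodual, qui rel\`eve en derni\`ere instance de la classification d'Arthur-Mok des repr\'esentations automorphes discr\`etes des groupes unitaires via la stabilisation de la formule des traces tordue sur $GL_n$ ; une fois cet ingr\'edient admis, le reste de l'argument est formel.
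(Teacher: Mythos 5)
Le texte ne d\'emontre pas cet \'enonc\'e : il s'agit d'une citation pure et simple de la proposition 1.3.3 de \cite{KMSW}, si bien qu'il n'y a pas de preuve interne \`a laquelle comparer la v\^otre. Votre esquisse suit par ailleurs une route sensiblement diff\'erente de celle de la r\'ef\'erence. Vous d\'efinissez $\psi^n_v$ en localisant chaque facteur simple par la correspondance de Langlands locale pour $GL$, puis vous cherchez \`a v\'erifier a posteriori que $\psi^n_v$ appartient \`a l'image de $\eta_{\chi_{\kappa},*}$. Dans \cite{KMSW} et \cite{Mok}, au contraire, la localisation est d\'efinie en composant $\widetilde{\psi}$ avec la classe de conjugaison canonique de plongements $L_{F_v} \hookrightarrow \mathcal{L}_{\psi}$ : le param\`etre $\psi_v$ prend alors ses valeurs dans $\prescript{L}{}{U}_{E_v/F_v}(n)$ \emph{par construction}, la compatibilit\'e des parit\'es \'etant encapsul\'ee en amont dans les << seed theorems >> qui servent \`a construire $\mathcal{L}_{\psi}$. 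Votre argument d'unicit\'e via l'injectivit\'e de $\eta_{\chi_{\kappa},*}$ est en revanche correct et co\"incide avec celui de la r\'ef\'erence.

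Deux mises en garde sur l'\'etape d'existence. D'abord, la formulation << chaque localisation $(\pi_i)_v$ est conjugu\'ee autoduale de m\^eme parit\'e locale $\eta_i$ >> est impr\'ecise : le param\`etre local $\phi_{(\pi_i)_v}$ n'est pas irr\'eductible en g\'en\'eral, ses constituants peuvent \^etre de parit\'es distinctes ou ne pas \^etre conjugu\'es autoduaux du tout, et l'\'enonc\'e correct est que $\phi_{(\pi_i)_v}$ appartient \`a l'image du changement de base $\eta_{\chi_{\kappa_i},*}$ pour le signe $\kappa_i$ d\'etermin\'e globalement par le p\^ole de la fonction $L$ d'Asai (th\'eor\`eme 2.4.2 de \cite{Mok}) ; il faut ensuite encore v\'erifier que la somme des descentes locales des facteurs, avec leurs multiplicit\'es $l_i$ et les paires index\'ees par $J_{\psi^n}$, satisfait la condition d'image pour le groupe $U_{E_v/F_v}(n)$ tout entier. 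Ensuite, et c'est le point essentiel, cet ingr\'edient n'est pas un lemme ind\'ependant que l'on peut admettre librement : sa d\'emonstration fait partie int\'egrante de la r\'ecurrence qui \'etablit simultan\'ement les classifications locale et globale, et c'est pr\'ecis\'ement pour cela que \cite{KMSW} passe par $\mathcal{L}_{\psi}$ plut\^ot que par une v\'erification facteur par facteur du c\^ot\'e de $GL_n$. Votre plan est donc acceptable comme sch\'ema de lecture du r\'esultat, mais l'essentiel de la difficult\'e est report\'e sur l'ingr\'edient que vous admettez.
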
 
	
	On peut utiliser cette proposition pour définir le diagramme commutatif suivant
	\begin{center}
		\begin{tikzpicture}[scale = 1]
			\draw (-1,0) node {$L_{F_v} \times SU(2) $};
			\draw (3,0) node {$ \prescript{L}{}{U_{E_v/F_v}(n)} $};
			\draw (7,0) node {$ \prescript{L}{}{G_{E_v/F_v}(n)} $};
			\draw (11,0) node {$ W_{F_v} $};
			\draw (-1,-1.5) node {$ \mathcal{L}_{\psi} \times SL(2, \C)  $};
			\draw (3,-1.5) node {$\prescript{L}{}{U_{E/F}(n)}$};
			\draw (7,-1.5) node {$\prescript{L}{}{G_{E/F}(n)}$};
			\draw (11,-1.5) node {$W_F$};
			\draw [->] (-1,-0.4) -- (-1,-1.15);
			\draw [->] (3, -0.4) -- (3,-1.15);
			\draw [->] (7, -0.4) -- (7,-1.15);
			\draw [->] (11, -0.4) -- (11,-1.15);
			\draw [->] (4.2, 0) -- (6,0)node [midway, above]{$\eta_{\chi_{\kappa}}$};
			\draw [->] (4.2, -1.5) -- (6,-1.5)node [midway, above]{$\eta_{\chi_{\kappa}}$};
			\draw [->] (8.2, 0) -- (10.3,0);
			\draw [->] (8.2, -1.5) -- (10.3,-1.5);
			\draw [->] (0.2,0) -- (2,0) node [midway, above]{$\psi_v$};
			\draw [->] (0.2, -1.5) -- (2, -1.5) node [midway, above]{$\widetilde{\psi}$};
			\draw [->] [domain=-1:7] plot (\x, -0.05 * \x * \x + 6 * 0.05 *\x + 7*0.05 + 0.5 );
			\draw (3, 1.6) node {$\psi^n_v$};
			\draw [->] [domain=-1:7] plot (\x, 0.05 * \x * \x - 6 * 0.05 *\x - 7*0.05 - 2 );
			\draw (3, -2.4) node {$\widetilde{\psi^n}$};
		\end{tikzpicture}
	\end{center}
	
	Le diagramme commutatif ci-dessus nous permet de définir les morphismes de localisation pour les centralisateurs. Le second morphisme vertical envoie $\Im(\psi_v)$ dans $\Im(\widetilde{\psi})$ induisant alors 
	\[
	S_{\psi} \longrightarrow S_{\psi_v}, \quad \quad \quad \mathcal{S}_{\psi} \longrightarrow \mathcal{S}_{\psi_v}, \quad \quad \quad S_{\psi}^{\natural} \longrightarrow S_{\psi_v}^{\natural}.
	\]
	
	\begin{remarque} \phantomsection \label{itm : générique}
		\begin{enumerate}
			\item[$\bullet$] Si $\psi \in \Psi(U_{E/F}(n), \eta_{\chi_{\kappa}})$ est un paramètre global générique alors les paramètres locaux $\psi_v$ le sont pour toute place $v$.
			\item[$\bullet$] Les deux premiers morphismes de localisation sont injectifs.
		\end{enumerate}
	\end{remarque}
	
	\subsection{Formules de multiplicité pour les groupes unitaires} \label{itm : formule de multiplicité}
	Soient $F$ un corps local et $E/F$ une extension quadratique. On a un unique groupe unitaire quasi-déployé $ U^* := U_{E/F} (n)$. Considérons une forme intérieure étendue $(\varrho, z) : U^*  \longrightarrow U $. La donnée $(\varrho, z)$ définit un caractère $ \chi_z \in X^*(Z(\widehat{U})^{\Gamma}) $ via (\ref{itm: kappa}). Considérons un paramètre $ \psi \in \Psi(U^*) $ et les centralisateurs associés 
	\begin{equation} \phantomsection \label{caractère}
		Z(\widehat{U})^{\Gamma} \hookrightarrow S_{\psi} \twoheadrightarrow S^{\natural}_{\psi}
	\end{equation}  
	
	On note $ \text{Irr} (S^{\natural}_{\psi}, \chi_z) $ l'ensemble des caractères de $S^{\natural}_{\psi}$ dont le tiré en arrière via (\ref{caractère}) induit le caractère $ \chi_z $. Rappelons également que $ \Pi_{unit} (U) $ (resp. $\Pi_{temp} (U)$, resp. $\Pi_{2, temp} (U)$) est l'ensemble des représentations unitaires (resp. tempérées, resp. de carré intégrable).
	
	\begin{théorème} (\cite{KMSW} théorème 1.6.1) \phantomsection \label{itm: local}
		\begin{enumerate}
			\item Soit $\psi \in \Psi (U^*)$. Il existe un ensemble fini $\Pi_{\psi}(U,\varrho)$ muni d'un morphisme vers $\Pi_{\text{unit}}(U)$. L'ensemble $\Pi_{\psi}(U,\varrho)$ ne dépend pas de $z$ et est muni d'une application
			\[
			\Pi_{\psi}(U,\varrho) \longrightarrow \text{Irr}(S_{\psi}^{\natural}, \chi_z), \qquad \pi \mapsto \langle \pi, - \rangle_{\varrho, z}
			\]
			L'ensemble $\Pi_{\psi}(U,\varrho)$ ainsi que $\langle-| -\rangle_{\varrho ,z}$ dépendent seulement de la classe d'équivalence $\Xi$ de $(\varrho, z)$. Pour $\psi$ un paramètre générique (i.e $\psi \in \Phi_{\text{bdd}}(U^*) $), l'ensemble $\Pi_{\psi}(U,\varrho)$ est non vide si et seulement si $\psi$ est $(U, \varrho)$-relevant.
			\item Supposons que $\psi \in \Phi_{\text{bdd}}(U^*)$, i.e $\psi$ est générique, alors le morphisme $ \Pi_{\psi}(U,\Xi) \longrightarrow \Pi_{\text{unit}}(U) $ est injectif et d'image contenue dans $\Pi_{\text{temp}}(U)$. Si $F$ est non archimédien alors l'application $\Pi_{\psi}(U,\Xi) \longrightarrow \text{Irr}(S_{\psi}^{\natural}, \chi_z)$ est une bijection.   
			\item On a 
			\[
			\Pi_{\text{temp}}(U) = \coprod_{\psi \in \Phi_{\text{bdd}}(U^*)} \Pi_{\psi}(U,\Xi) \quad \quad \text{et} \quad \quad \Pi_{\text{2,temp}}(U) = \coprod_{\psi \in \Phi_{\text{2,bdd}}(U^*)} \Pi_{\psi}(U,\Xi).
			\] 
		\end{enumerate}
	\end{théorème}
	Intéressons nous à présent à
	classification globale. Soient $E/F$ une extension quadratique d'un corps global $F$ et $(U, \varrho)$ une forme intérieure du groupe unitaire quasi-déployé $ U^* := U_{E/F}(n)$. On peut choisir $z$ tel que $(U, \varrho, z)$ est une forme intérieure étendue. On aimerait associer des paquets globaux $ \Pi_{\psi}(U, \varrho) $ à chaque paramètre $\psi \in \Psi (U^*, \eta_{\chi_{\kappa}})$. Il existe un morphisme de localisation $ \psi \longmapsto \psi_v $ de $\Psi (U^*, \eta_{\chi_{\kappa}})$ à $\Psi^{+}_{\text{unit}} (U^*_v)$. D'après le théorème \ref{itm: local}, pour chaque $\psi_v \in \Psi(U^*_v) $, on a un paquet $\Pi_{\psi_v}(U_v, \varrho_v)$ muni d'une application
	\[
	\Pi_{\psi_v}(U_v,\varrho_v) \longrightarrow \text{Irr}(S_{\psi_v}^{\natural}, \chi_z), \qquad \pi_v \mapsto \langle\pi_v, - \rangle_{\varrho_v, z_v}.
	\]  
	
	Pour un $A$-paramètre $\psi_v \in \Psi^{+}_{\text{unit}} (U^*_v) \setminus \Psi(U^*_v) $, on peut définir un paquet $\Pi_{ \psi_v } (U_v, \varrho_v)$. L'idée de la construction consiste à descendre à un sous-groupe de Levi $M_v$ de $U_v$ de sorte que $(\psi_v)_{M_v}$ appartienne à $\Psi(M^*_v)$ (pas seulement à $\Psi^{+}(M^*_v)$) et on applique le théorème \ref{itm: local} pour $M_v$ et $(\psi_v)_{M_v} \in \Psi(M^*_v)$. Pour plus de détails, voir \cite{Mok} page 32, 33 ou \cite{KMSW} section 1.6.4. 
	\begin{remarque} \phantomsection \label{itm : relevant}
		\begin{enumerate}
			\item[$\bullet$] Lorsque $\psi_v$ n'est pas $(U_v, \varrho_v)$-relevant alors le paquet $\Pi_{ \psi_v } (U_v, \varrho_v)$ est vide.
			\item[$\bullet$] Lorsque $(U_v, \varrho_v) = (U_v^*, 1)$ est quasi-déployé alors $\psi_v$ est toujours $(U_v, \varrho_v)$-relevant. Si de plus $\psi_v$ est générique alors le paquet $\Pi_{ \psi_v } (U_v, \varrho_v)$ est non vide.
		\end{enumerate}
	\end{remarque}
	
	On pose
	\[
	\Pi_{\psi}(U, \varrho) := \left\lbrace  \bigotimes_v \pi_v : \pi_v \in \Pi_{\psi_v}(U_v,\varrho_v), \langle\pi_v, - \rangle_{\varrho_v, z_v} = 1 \quad \text{pour presque tout } v \right\rbrace .
	\]
	\begin{remarque}
		L'ensemble $\Pi_{\psi}(U, \varrho)$ peut être vide, notamment lorsque $U$ n'est pas quasi-déployé et $\psi$ n'est pas générique.
	\end{remarque}
	Pour chaque $ \pi = \bigotimes_v \pi_v \in \Pi_{\psi}(U, \varrho) $, on lui associe un caractère de $S_{\psi}^{\natural}$ par la formule
	\[
	\langle \pi,s \rangle_{\varrho} := \prod_v \langle \pi_v, s_v \rangle_{\varrho_v, z_v}, \quad s \in S_{\psi}^{\natural}
	\]
	où $s_v$ désigne l'image de $s$ par le morphisme naturel $S_{\psi}^{\natural} \longrightarrow S_{\psi_v}^{\natural}$.
	
	%Il existe également un caractère $\epsilon_{\psi} : \overline{S}_{\psi} \longrightarrow \{ \pm 1 \} $,
	\begin{remarque} \phantomsection \label{itm : conditions sur le centre}
		Pour $ s \in Z(\widehat{U}^*)^{\Gamma} $, on a $ \langle \pi, s \rangle_{\varrho} = 1 $ (\cite{KMSW}, p. 89, premier paragraphe).
	\end{remarque}
	
	\begin{definition} \phantomsection \label{itm : paquets}
		Soit $ \Pi_{\psi}(U, \varrho, \epsilon_{\psi}) := \left\lbrace \pi \in \Pi_{\psi} ( U, \varrho ) : \langle \pi , - \rangle_{\varrho} = \epsilon_{\psi} \right\rbrace $ où $\epsilon_{\psi}$ est le caractère d'Arthur. Pour $\psi$ un paramètre générique, $\epsilon_{\psi} \equiv 1 $. 
	\end{definition} 
	\begin{théorème} (\cite{KMSW} théorème $1.7.1)$ \phantomsection \label{itm: global}
		Soient $E/F$ une extension quadratique d'un corps global $F$ et $\kappa \in \{ \pm 1 \}$ ainsi que $\chi_{\kappa} \in \mathcal{Z}_{E}^{\kappa}$. Soit $(U, \varrho)$ une forme intérieure pure de $U^*$. Il existe un isomorphisme de $U(\mathbb{A}_F)$-modules
		\[
		L^2_{\text{disc}}(U(F) \setminus U(\mathbb{A}_F) ) \simeq \bigoplus_{\psi \in \Psi(U^*, \eta_{\chi_{\kappa}})} L^2_{\text{disc}, \psi} (U(F) \setminus U(\mathbb{A}_F) ). 
		\]
		
		Si $\psi = \phi$ est générique alors 
		\begin{enumerate}
			\item[$\bullet$] $L^2_{\text{disc}, \phi} (U(F) \setminus U(\mathbb{A}_F) ) = 0$ si $\psi \notin \Psi_2(U^*, \eta_{\chi_{\kappa}})$.
			\item[$\bullet$] $L^2_{\text{disc}, \phi} (U(F) \setminus U(\mathbb{A}_F) ) \simeq \bigoplus_{\pi \in \Pi_{\phi} (U, \varrho, \epsilon_{\psi})} \pi$ si $\psi \in \Psi_2(U^*, \eta_{\chi_{\kappa}})$.
		\end{enumerate}	
		En particulier si $\pi$ est une représentation automorphe du groupe unitaire $(U, \varrho)$ appartenant à un paquet global générique alors $m_{\pi} = 1$. 
	\end{théorème}
	\begin{proof}
		Ce théorème est le résultat global principal de \cite{KMSW}. Pour la première décomposition consulter page 151 et pour la deuxième décomposition, consulter page 205-206 de loc.cit. 
	\end{proof}
	%\subsection{Classification des séries discrètes des groupes unitaires $p$-adiques} \label{itm : séries discrètes}
	
	%\textbf{Classification des séries discrètes des groupes unitaires $p$-adiques} \label{itm : séries discrètes}
	
	Résumons à présent les résultats principaux de Moeglin dans \cite{Moe}, en particulier la classification des représentations supercuspidales des groupes unitaires $p$-adiques.
	
	Soit $\psi$ un homomorphisme de $W_E \times SL_2(\C)$ dans $GL_n(\C)$. On suppose que $\psi$ est semi-simple borné sur $W_E$ et continu. On suppose qu'il se décompose en une somme de représentations irréductibles
	%\begin{equation*}
	$ \psi = \bigoplus_{\rho , a} \rho \otimes \sigma_a $
	%\end{equation*}
	où $\rho$ est une représentation irréductible de $W_E$ et $\sigma_a$ est l'unique représentation irréductible de $SL_2(\C)$ de dimension $a$. 
	
	On dit que $\psi$ est $\theta$-discret s'il est sans multiplicité et pour toute $\rho$, la classe de conjugaison de $\rho$ est invariante sous l'action de $g \longmapsto \J_{\rho} \prescript{t}{}{\overline{g}}^{-1} \J_{\rho}^{-1} $ où $\J_{\rho}$ est la matrice antidiagonale de taille $d_{\rho} = \dim \rho$ avec à la place $i$ l'élément $(-1)^{(i+1)}$. On dit que $\psi$ est stable si elle se prolonge en un homomorphisme de $W_F \times SL_2(\C)$ dans le groupe dual de $U_{E/F}(n)$.
	
	En fait, le morphisme $\eta_{\chi_{\kappa}, *}$ (voir \ref{itm: changement de base}) réalise une bijection entre $ \Phi_{\text{2, bdd}} (U_{E/F}(n)) $ et l'ensemble des morphismes $\theta$-discrets stables. De plus, si on note $\overline{\psi} = \eta_{\chi_{\kappa}, *}^{-1}(\psi) $ alors on a $ S_{\overline{\psi}}^{\natural} = (\Cent_{GL_n(\C)}\psi)^{\theta} $. On a la classification suivante des séries discrètes des groupes unitaires $p$-adiques.
	\begin{théorème} (5.7 de \cite{Moe})
		Il y a une bijection entre l'ensemble des paquets stables de séries discrètes de $U_{E/F}(n)$ et l'ensemble des classes de conjugaison des morphismes $\theta$-discrets et stables de $W_E \times SL_2(\C)$ dans $GL_n(\C)$.
	\end{théorème} 
	
	Ensuite on voudrait déterminer quels paquets contiennent des représentations supercuspidales. On aura donc besoin de la notion d'un morphisme sans trou et la construction d'un sous-groupe $A(\psi)$ de $(\Cent_{GL_n(\C)}\psi)^{\theta}$.
	
	Soit $\psi$ un morphisme $\theta$-discret et stable. On note $\psi[a]$ la composante isotypique de la représentation $\psi$, vue comme une représentation de $SL_2(\C)$, pour la représentation irréductible de $SL_2(\C)$ de dimension $a$. Pour tout $a$, $\psi[a]$ est une représentation de $W_E$. On dit que $\psi$ est sans trou si pour tout $a > 2$, $\psi[a]$ est une sous-représentation de $\psi[a-2]$ en tant que représentation de $W_E$. 
	
	On décompose $\psi$ en représentations irréductibles et on considère un couple $(\rho, a)$ de sorte que $ \rho \otimes \sigma_a$ soit une sous-représentation de $\psi$. On suppose qu'il existe $ 0 \leq b < a $ tel que $b = 0$ si $a$ est pair et si $b \neq 0$, $\rho \otimes \sigma_b $ soit aussi une sous-représentation de $\psi$. On note alors $a_{-}$ le plus grand $b$ vérifiant cette propriété. On note $z_{\rho ,a}$ l'élément du centralisateur de $\psi$ dans $GL_n(\C)$ dont les valeurs propres $-1$ ont exactement pour espace propre la somme $\rho \otimes \sigma_a \oplus \rho \otimes \sigma_{a_{-}}$. On note $A(\psi)$ le sous-groupe du centralisateur de $\psi$ engendré par ces éléments $z_{\rho ,a}$.
	
	\begin{théorème} (8.4.4 de \cite{Moe}).
		Le paquet stable correspondant à $\psi$ contient une représentation supercuspidale si et seulement si $\psi$ est sans trou. Lorsque $\psi$ est sans trou alors il existe un unique caractère $\epsilon_{alt}$ de $A(\psi)$ tel que le nombre de représentations supercuspidales est le cardinal de l'ensemble des caractères de $(\Cent_{GL_n(\C)}\psi)^{\theta}/ \{ \Id, - \Id \}$ dont la restriction à $A(\psi)$ vaut $\epsilon_{alt}$. 
	\end{théorème}
	
	Enfin, lorsque $\psi : W_E \times SL_2(\C) \longrightarrow GL_n(\C)$ est trivial sur $SL_2(\C)$, on voit que $ \psi $ est une somme de représentations de la forme $\rho \otimes \sigma_1$, en particulier $A(\psi)$ est trivial. On en déduit le résultat suivant qui nous sera utile pour la suite.
	\begin{proposition} \phantomsection \label{itm : C.Moeglin} (C.Moeglin)
		Soit $ \phi : W_F \times SL_2(\C) \longrightarrow  \prescript{L}{}{U}_{E/F}(n) $ un $L$-paramètre discret trivial sur $SL_2(\C)$, alors le $L$-paquet $\Pi_{\phi}(U_{E/F})(n)$ ne contient que des représentations supercuspidales.
	\end{proposition}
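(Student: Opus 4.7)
L'argument consiste \`a ramener l'\'enonc\'e au crit\`ere de Moeglin (th\'eor\`eme 8.4.4 de \cite{Moe}) rappel\'e juste au-dessus, en exploitant de fa\c{c}on cruciale la trivialit\'e de $\phi$ sur le facteur $SU(2)$.

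Je commencerais par transporter $\phi$ via le morphisme de changement de base : la proposition \ref{itm : changement de base} associe \`a $\phi \in \Phi_{2, \text{bdd}}(U_{E/F}(n))$ un morphisme $\psi := \eta_{\chi_{\kappa}, *}(\phi) : W_E \times SL_2(\C) \longrightarrow GL_n(\C)$ qui est $\theta$-discret et stable, et la bijection r\'esultante pr\'eserve la structure de paquet. Comme $\phi$ est trivial sur $SU(2)$, le param\`etre $\psi$ est trivial sur $SL_2(\C)$, et sa d\'ecomposition en irr\'eductibles ne fait intervenir que des facteurs de la forme $\rho \otimes \sigma_1$, c'est-\`a-dire $\psi[1] = \psi$ et $\psi[a] = 0$ pour tout $a \geq 2$.

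J'en d\'eduirais deux choses simultan\'ement. D'une part, $\psi$ est sans trou au sens de Moeglin : la condition demandant $\psi[a] \hookrightarrow \psi[a-2]$ pour tout $a > 2$ est vide, puisqu'elle se r\'eduit \`a $0 \hookrightarrow 0$. D'autre part, le groupe $A(\psi) \subset \Cent_{GL_n(\C)}(\psi)$ est engendr\'e par les \'el\'ements $z_{\rho, a}$ associ\'es aux couples $(\rho, a)$ avec $\rho \otimes \sigma_a \subset \psi$ et $a \geq 2$ (dont on peut d\'efinir $a_-$) ; l'absence de telles composantes entra\^ine imm\'ediatement $A(\psi) = \{1\}$.

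Le th\'eor\`eme 8.4.4 de \cite{Moe} donne alors que $\Pi_{\phi}(U_{E/F}(n))$ contient effectivement des supercuspidales (puisque $\psi$ est sans trou), et que leur nombre est le cardinal des caract\`eres de $(\Cent_{GL_n(\C)} \psi)^{\theta} / \{ \Id, - \Id \}$ dont la restriction \`a $A(\psi)$ co\"incide avec $\epsilon_{\text{alt}}$. Comme $A(\psi)$ est trivial, cette contrainte de restriction dispara\^it, et le nombre de supercuspidales \'egale le cardinal total de l'ensemble des caract\`eres ind\'ex\'es, qui param\`etre pr\'ecis\'ement l'int\'egralit\'e du $L$-paquet $\Pi_{\phi}(U_{E/F}(n))$. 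Celui-ci est donc constitu\'e enti\`erement de repr\'esentations supercuspidales. La d\'emonstration ne pr\'esente pas d'obstacle technique une fois les r\'esultats de \cite{Moe} invoqu\'es ; le seul point \`a v\'erifier soigneusement est la trivialit\'e de $A(\psi)$ \`a partir de la d\'efinition, qui est imm\'ediate ici.
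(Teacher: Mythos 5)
Votre argument est correct et co\"incide avec celui du texte : le paragraphe pr\'ec\'edant la proposition observe pr\'ecis\'ement que, $\psi$ \'etant trivial sur $SL_2(\C)$, il est somme de facteurs $\rho \otimes \sigma_1$, donc sans trou avec $A(\psi)$ trivial, et le th\'eor\`eme 8.4.4 de \cite{Moe} donne alors que le nombre de supercuspidales \'epuise tout le paquet. Votre r\'edaction explicite simplement le d\'ecompte final que le papier laisse implicite.
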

	\subsection{Une application à la globalisation des représentations locales} Dans ce paragraphe, tous les groupes et paramètres globaux sont notés par un point $ \bullet $ au dessus. On suppose $n$ impair. Le but est de montrer l'existence de représentations automorphes satisfaisant des propriétés particulières. 
	
	%\textbf{Passage du groupe unitaire au groupe des similitudes unitaires} \phantomsection \label{itm : passage}
	Décrivons tout d'abord le passage du groupe unitaire au groupe de similitudes unitaires. Soit $F$ un corps totalement réel ainsi que $E/F$ une extension quadratique. Supposons que $V$ est un espace hermitien de dimension $n$ relatif à l'extension $E/F$. Considérons $ \overset{\bullet}{G} = GU(V)$ le groupe de similitudes unitaires défini sur $\Q$ par
	\[
	\overset{\bullet}{G}(R) = \big\{ g \in GL(V \otimes R) | <gv, gw> = \nu (g) <v,w>, \ v,w \in V \otimes R \big\}
	\] 
	pour toute $\Q$-algèbre $R$ et $\nu(g) \in R^{\times}$. Notons $\overset{\bullet}{U} = U(V)$ le groupe unitaire associé.
	
	\begin{proposition} (\cite[section. CHL.IV.C, prop. 1.1.4]{CHLN}). \phantomsection \label{itm: de U à GU}
		Supposons que $n$ est impair. Soit $\pi$ une représentation automorphe irréductible de $\overset{\bullet}{G}(\A)$ dont la restriction à $\overset{\bullet}{U}(\A)$ contient une représentation automorphe irréductible $\sigma$. Si $\sigma$ apparaît avec multiplicité $1$ dans le spectre discret de $\overset{\bullet}{U}$ alors $\pi$ apparaît avec multiplicité $1$ dans le spectre discret de $\overset{\bullet}{G}$. De plus, si $\chi$ est le caractère central de $\pi$ alors $\pi$ est la seule représentation automorphe de $\overset{\bullet}{G}(\A)$ contenant $\sigma$ avec le caractère central $\chi$.
	\end{proposition}
	
	Soit $G$ un groupe de similitudes unitaires non ramifié en $n = 2k + 1$ variables défini sur $\Q_p$ et $U \subset G$ le groupe unitaire noyau du facteur de similitudes. Soit $K/\Q_p$ l'extension quadratique non ramifiée. On a une inclusion $K^{\times} \hookrightarrow Z_G$. 
	\begin{proposition} \phantomsection \label{itm : passage local}
		On a une bijection entre les représentations irréductibles de $G(\Q_p)$ et les couples $(\pi, \chi)$ où $\pi$ est une représentation irréductible du groupe unitaire $U(\Q_p)$ et $\chi$ un caractère de $K^{\times}$ tel que $\omega_{\pi | K^{\times} \bigcap U(\Q_p)} = \chi_{| K^{\times} \bigcap U(\Q_p)}$. 
	\end{proposition}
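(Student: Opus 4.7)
Mon plan repose sur l'observation que, gr�ce � la parit� impaire de $n$, on dispose de la d�composition $G(\Q_p) = U(\Q_p) \cdot K^{\times}$, ce qui ram�ne le probl�me � de la th�orie des repr�sentations d'un quotient. La premi�re �tape consiste donc � �tablir cette d�composition. Pour tout $g \in G(\Q_p)$ l'identit� $g^{*}g = \nu(g)\Id$ entra�ne $N_{K/\Q_p}(\det g) = \nu(g)^n$. Par la th�orie du corps de classes locale, le groupe $\Q_p^{\times}/N_{K/\Q_p}(K^{\times})$ est cyclique d'ordre $2$; puisque $n$ est impair, l'appartenance de $\nu(g)^n$ au sous-groupe des normes force $\nu(g) \in N_{K/\Q_p}(K^{\times})$. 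Comme $\nu(\lambda) = N_{K/\Q_p}(\lambda)$ pour $\lambda \in K^{\times} \subset Z_G$, on trouve $\lambda$ tel que $\nu(g) = \nu(\lambda)$, d'o� $u := g\lambda^{-1} \in U(\Q_p)$, et la d�composition annonc�e.

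La deuxi�me �tape consiste � expliciter $G(\Q_p)$ comme quotient. Le morphisme $U(\Q_p) \times K^{\times} \twoheadrightarrow G(\Q_p)$ d�fini par $(u,\lambda) \mapsto u\lambda$ est surjectif d'apr�s l'�tape pr�c�dente et admet pour noyau $\{(\lambda^{-1},\lambda) : \lambda \in U_1\}$, o� $U_1 := U(\Q_p) \cap K^{\times} = \ker N_{K/\Q_p}$ co�ncide avec le centre de $U(\Q_p)$. Ainsi $G(\Q_p) \simeq \bigl(U(\Q_p) \times K^{\times}\bigr)/U_1$, $U_1$ �tant plong� antidiagonalement.

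La troisi�me �tape est un exercice de th�orie des repr�sentations. Les repr�sentations irr�ductibles de $G(\Q_p)$ s'identifient, par passage au quotient, aux repr�sentations irr�ductibles de $U(\Q_p) \times K^{\times}$ triviales sur l'image antidiagonale de $U_1$. Comme $K^{\times}$ est ab�lien, une telle repr�sentation est n�cessairement de la forme $\pi \otimes \chi$ avec $\pi \in \Pi(U(\Q_p))$ et $\chi$ un caract�re de $K^{\times}$. La condition de trivialit� sur l'antidiagonale se traduit par $\omega_{\pi}(\lambda)^{-1}\chi(\lambda) = 1$ pour tout $\lambda \in U_1$, soit $\chi_{|U_1} = \omega_{\pi|U_1}$, qui est pr�cis�ment la compatibilit� �nonc�e. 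La bijection dans le sens r�ciproque est obtenue en posant $\rho(u\lambda) := \pi(u)\chi(\lambda)$, la compatibilit� des caract�res centraux assurant la bonne d�finition.

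Le seul point d�licat, � mon sens, est la premi�re �tape: il est crucial de bien voir que $\nu(G(\Q_p)) \subset N_{K/\Q_p}(K^{\times})$, ce qui utilise de mani�re essentielle que $n$ est impair (pour $n$ pair la d�composition peut �chouer car l'image de $\nu$ peut �tre strictement plus grande que les normes, et l'on aurait des composantes suppl�mentaires index�es par $\Q_p^{\times}/N_{K/\Q_p}(K^{\times})$). Tout le reste est de nature formelle.
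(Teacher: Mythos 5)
Votre d\'emonstration est correcte. Notez que l'article \'enonce cette proposition sans d\'emonstration (elle rel\`eve du passage standard du groupe unitaire au groupe des similitudes, dans l'esprit de \cite{CHLN}), il n'y a donc pas d'argument du texte auquel comparer le v\^otre. Les trois \'etapes sont les bonnes : l'identit\'e $N_{K/\Q_p}(\det g) = \nu(g)^n$ jointe \`a l'imparit\'e de $n$ et au fait que $\Q_p^{\times}/N_{K/\Q_p}(K^{\times})$ est d'ordre $2$ donne bien $G(\Q_p) = U(\Q_p)\cdot K^{\times}$, la description de $G(\Q_p)$ comme quotient de $U(\Q_p)\times K^{\times}$ par $U_1$ plong\'e antidiagonalement est exacte, et la r\'eduction aux repr\'esentations $\pi\otimes\chi$ triviales sur ce noyau (licite car toute irr\'eductible lisse d'un produit dont un facteur est ab\'elien se factorise ainsi) redonne pr\'ecis\'ement la condition de compatibilit\'e des caract\`eres centraux de l'\'enonc\'e.
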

	Soit $U^{*}_p$ le groupe unitaire $p$-adique quasi-déployé en $n$ variables associé à l'extension quadratique $E_p / \Q_p$ puis $\phi_p \in \Phi_{2}(U_p^*) $ un $L$-paramètre discret.
	\begin{proposition} (\cite{KMSW} prop. $4.4.1$) \phantomsection \label{globaliser les paramètres locaux}
		Supposons qu'il existe un corps $CM$ de la forme $ \overset{\bullet}{E} = F^{+}\mathcal{K}$ où $\mathcal{K}$ est un corps quadratique imaginaire et %$F^{+} \neq \Q$ 
		tel que $ \overset{\bullet}{E}_p = E_p $. Alors il existe une place inerte $w$ et un paramètre global $\overset{\bullet}{\phi} \in \Phi_{2}(\overset{\bullet}{U^*})$ du groupe unitaire quasi-déployé $\overset{\bullet}{U^*}$ en $n$ variables associé à $\overset{\bullet}{E} / F^+$ tels que :
		\begin{enumerate} 
			\item [(i)] $\overset{\bullet}{\phi}_p = \phi_p$ et $\overset{\bullet}{\phi}_w \in \Phi_{\text{bdd}} (U_w^*)$
			\item [(ii)] $\overset{\bullet}{\phi}_u$ est un paramètre discret pour toute $u$ infinie.
			\item [(iii)] Les morphismes canoniques 
			$S_{\overset{\bullet}{\phi}} \longrightarrow S_{\overset{\bullet}{\phi}_p}$ et
			$
			S_{\overset{\bullet}{\phi}} \longrightarrow S_{\overset{\bullet}{\phi}_w}
			$
			sont des isomorphismes.
		\end{enumerate}	
	\end{proposition}
	On utilise la même construction que celle décrite dans la proposition 4.4.1 de \cite{KMSW}. Dans la proposition 4.4.1 et 4.3.1 de loc.cit, les auteurs globalisent également les groupes unitaires réels et ils supposent donc que le corps global possède au moins deux places infinies. Dans notre cas cette hypothèse n'est pas nécessaire. %il suffit de demander qu'il existe une place infinie différente de $p$. 
	\begin{proof} On donne seulement les étapes principales.
		
		Écrivons $\eta_{\chi_{\kappa} *} \phi_p = \phi_p^n$ sous la forme d'une somme de paramètres simples de groupes linéaires généraux $ \phi_p^n = \phi^{n_1}_{1,p} \oplus \cdots \oplus \phi^{n_r}_{r,p} $.
		
		À chaque $\phi^{n_i}_{i, p}$ on peut associer un groupe global $\overset{\bullet}{U}(n_i)$ et un morphisme $\overset{\bullet}{\eta}_{\chi_{\kappa_i}}$ avec un unique signe $\kappa_i$ de sorte que $\phi^{n_i}_{i, p}$ provient d'un $L$-paramètre $\phi_{i,p}$ dans $\Phi_{2, bdd}(\overset{\bullet}{U_p}(n_i))$ via le morphisme $(\overset{\bullet}{\eta}_{\chi_{\kappa_i}})_p$. 
		
		Fixons une place inerte $w$ et des paramètres simples deux à deux distincts $\phi_{1, w}, \cdots, \phi_{r, w}$ de sorte que $ \dim \phi_{i, w} =\dim \phi_{i,p} $.
		
		D'après le lemme 4.3.1 de loc.cit, pour chaque $i \in \{ 1, \cdots, r \}$ il existe un paramètre global simple $\overset{\bullet}{\phi_i}$ de sorte que
		\begin{enumerate}
			\item[$\bullet$] $(\overset{\bullet}{\phi_i})_p = \phi_{i,p}$ et $(\overset{\bullet}{\phi_i})_w = \phi_{i, w}$,
			\item[$\bullet$] $(\overset{\bullet}{\phi_i})_{u}$ est un paramètre discret régulier et suffisamment régulier dans le sens de \ref{itm: suffisament régulier} si $u$ est une place infinie.  
		\end{enumerate} 
		
		Construisons ensuite un $L$-paramètre global de $GL(n)$ de la manière suivante
		\[
		\overset{\bullet}{\ \phi^n} = \overset{\bullet}{\ \ \phi^{n_1}_1} \boxplus \cdots \boxplus \overset{\bullet}{\ \ \phi^{n_r}_r}.
		\] 	
		
		On montre que $\overset{\bullet}{\ \phi^n}$ s'étend en un $L$-paramètre $ \overset{\bullet}{\phi} $ dans $\Phi_{2}(\overset{\bullet}{U^*})$ en utilisant la compatibilité des signes $\kappa_i$ avec la parité des $\overset{\bullet}{\phi_i}$. On démontre enfin les propriétés (i), (ii) et (iii) par le même argument que celui de la proposition 4.4.1 de \cite{KMSW}. Remarquons enfin que la condition imposée sur les places infinies assure que le paramètre global est générique.
	\end{proof}
	
	Soit $\pi_p$ une représentation de carré intégrable d'un groupe unitaire $U_p$ $p$-adique\footnote{On rappelle que $n$ étant impair alors $U_p$ est quasi-déployé.} et $\widetilde{\pi}_p$ une représentation du groupe de similitudes unitaires correspondant telle que $(\widetilde{\pi}_p)_{| U_p} \simeq \pi_p $. Supposons que $\overset{\bullet}{U}$ est une forme intérieure de $ \overset{\bullet}{U^*} $ tel que $\overset{\bullet}{U}_p = U_p$ et $\overset{\bullet}{U}$ est quasi-déployé en toutes les places finies. Étendons $\overset{\bullet}{U}$ en une forme intérieure pure. D'après la proposition \ref{itm : forme intérieure pure}, il suffit d'étendre les groupes unitaires locaux en formes intérieures pures satisfaisant une certaine condition de parité.
	
	Comme $n$ est impair, d'après l'exemple \ref{itm : étendre}, pour chaque place $v$ il y a deux manières d'étendre le groupe unitaire local en une forme intérieure pure. Pour $v$ une place infinie de $ \overset{\bullet}{F^+} $, si le groupe $ \overset{\bullet}{U}_v$ est de signature $(i, n-i)$ avec $ i < n-i $, on choisit $a_v = ([\frac{n}{2}] + (n-i) ) \ \text{mod} \ 2$. Si $v$ est une place finie différente de $p$, on choisit $a_v = 0 \in \Z / 2\Z$ et finalement, on choisit $a_p$ l'unique élément de $\Z / 2\Z$ de sorte que la somme des $a_v$ s'annule.     
	\begin{proposition} \phantomsection \label{itm : globaliser auto}
		Il existe alors une représentation automorphe $\Pi$ de $\overset{\bullet}{U}$ telle que $\Pi_p \simeq \pi_p$. De plus s'il existe une représentation automorphe $\Pi'$ de $\overset{\bullet}{U}$ satisfaisant $(\Pi')^p \simeq (\Pi)^p $ alors $ \Pi \simeq \Pi' $. On a le même résultat pour les groupes de similitudes unitaires, à un caractère non ramifié près. Plus précisément, il existe une représentation automorphe $\widetilde{\Pi}$ de $\overset{\bullet}{G}$ telle que $\widetilde{\Pi}_p \simeq \widetilde{\pi}_p \otimes (\omega \circ c) $ où $ c : G(\Q_p) \longrightarrow \Q_p^{\times} $ est le caractère de similitudes et $\omega$ est un caractère non ramifié de $\Q_p^{\times}$. De plus s'il existe une représentation automorphe $\widetilde{\Pi}'$ de $\overset{\bullet}{G}$ satisfaisant $(\widetilde{\Pi}')^p \simeq (\widetilde{\Pi})^p $ alors $ \widetilde{\Pi} \simeq \widetilde{\Pi}' $. 
	\end{proposition}
	\begin{proof}
		%D'après la proposition \ref{itm: de U à GU}, 
		Le cas des groupes de similitudes unitaires se déduit de celui pour les groupes unitaires auquel on se ramène donc.	Montrons l'existence de $\Pi$.
		
		On suppose que $\pi_p$ appartient à $\Pi_{\phi_p}(U^*_p, \varrho)$ pour un paramètre discret $\phi_{p}$. On note alors $\overset{\bullet}{\phi} \in \Phi_{2}(\overset{\bullet}{U^*}) $ le paramètre global qui existe d'après le lemme \ref{globaliser les paramètres locaux}. En particulier, il existe une place inerte $w$ telle que $ \overset{\bullet}{\phi}_w \in \Phi_{\text{bdd}} (U_w^*) $ et les morphismes canoniques 
		$S_{\overset{\bullet}{\phi}} \longrightarrow S_{\overset{\bullet}{\phi}_p}$ et
		$
		S_{\overset{\bullet}{\phi}} \longrightarrow S_{\overset{\bullet}{\phi}_w}
		$
		sont des isomorphismes. 
		
		On montre que les paquets $\Pi_{\overset{\bullet}{\phi_v}}(\overset{\bullet}{U^*_v}, \varrho)$ sont non vides. En effet, si $v = p$, $\Pi_{\overset{\bullet}{\phi_v}}(\overset{\bullet}{U^*_v}, \varrho)$ contient $\pi_p$. D'autre part, comme $\overset{\bullet}{\phi}_u$ est un paramètre discret si $u$ est une place infinie, on en déduit qu'il existe toujours une représentation dans le paquet correspondant. 
		
		Considérons une place finie $v \neq p$. Comme $ \overset{\bullet}{\phi}$ est générique, le paramètre localisé $ \overset{\bullet}{\phi_v}$ l'est également (remarque \ref{itm : générique}). Comme $\overset{\bullet}{U}$ est quasi-déployé en $v$ alors $\Pi_{\overset{\bullet}{\phi_v}}(\overset{\bullet}{U^*_v}, \varrho)$ est non vide d'après la remarque \ref{itm : relevant}.  
		
		Pour chaque $v \notin \{ p, w \}$, choisissons une représentation $\pi_v \in \Pi_{\overset{\bullet}{\phi_v}}(\overset{\bullet}{U^*_v}, \varrho) $. On va choisir une représentation $\pi_w \in \Pi_{\overset{\bullet}{\phi_w}}(\overset{\bullet}{U^*_q}, \varrho) $ de sorte que la représentation $ \bigotimes_v \pi_v $ soit automorphe. D'après le théorème \ref{itm: global} il faut donc choisir $\pi_w$ satisfaisant la relation :
		\begin{equation} \phantomsection \label{itm:preuve}
			\langle \pi,s \rangle_{\varrho} := \prod_v \langle \pi_v, s_v \rangle_{\varrho_v, z_v} = 1, \quad \forall s \in S_{\overset{\bullet}{\phi}}^{\natural}
		\end{equation}
		
		Comme $ S_{\overset{\bullet}{\phi}} \simeq S_{\overset{\bullet}{\phi}_w} \simeq S^{\natural}_{\overset{\bullet}{\phi}_w}$ (le dernier isomorphisme résulte du fait que $\overset{\bullet}{\phi}_w$ est un paramètre discret) et d'autre part les caractères $ \langle s_v, \pi_v \rangle $ sont fixés (pour $v \neq w$) , il existe un unique caractère dans $\text{Irr}(S_{\phi_w}^{\natural}, \chi_{z_w})$ satisfaisant l'égalité au-dessus. Le point $2$ de \ref{itm: local} affirme l'existence d'une représentation $\pi_w$ satisfaisant (\ref{itm:preuve}). \\
		
		Supposons maintenant qu'on a une représentation $\Pi'$ telle que $(\Pi')^p \simeq (\Pi)^p $. Montrons que $\Pi$ et $\Pi'$ sont dans le même paquet. Supposons que $\Pi' \in \Pi_{\overset{\bullet}{\phi'}}(\overset{\bullet}{U}, \varrho, \epsilon_{\phi})$ et on veut montrer que $ \overset{\bullet}{\phi'} = \overset{\bullet}{\phi} $.
		
		%		Puisque $(\Pi')^p \simeq (\Pi)^p $, on en déduit que $ \overset{\bullet}{\phi'}_v = \overset{\bullet}{\phi}_v $ pour tout $v \neq p$ et en particulier on a $ (\overset{\bullet}{\phi'}_v)^n = \overset{\bullet}{\phi^n}_v $. D'autre part $ \overset{\bullet}{\phi^n} $ et $(\overset{\bullet}{\phi'})^n$ sont des paramètres globaux du groupe $GL(n)$ et le théorème de multiplicité $1$ fort implique que $ \overset{\bullet}{\phi^n} = (\overset{\bullet}{\phi'})^n$. On en déduit donc que $ \overset{\bullet}{\phi'} = \overset{\bullet}{\phi} $ et en particulier $ \Pi_p $ et $ \Pi'_p $ sont dans le même paquet local.
		
		À presque toutes les places finies non ramifiées $ v $ où $ \pi_v $ est non ramifiée, les localisations $ \overset{\bullet}{\phi}_v $ et $ \overset{\bullet}{\phi'}_v $ sont égales. En effet, notre hypothèse suffisamment régulière implique que ces paramètres sont génériques. D'après \cite[pg 189]{Mok}, ces paramètres locaux sont factorisés via $ \prescript{L}{}{M} $ où $ M $ est le sous-groupe de Levi minimal de $ U_v $ et correspondent au même paramètre sphérique de $ M $. Cela implique que $ \overset{\bullet}{\phi^n} $ et $ (\overset{\bullet}{\phi'})^n $ ont la même "Hecke string". Alors, par \cite{JS}, \cite[Theorem 4.3]{Art4} on voit que $ \overset{\bullet}{\phi} $ et $ \overset{\bullet}{\phi'} $ sont égaux.
		
		Comme $ \Pi $ et $\Pi'$ sont des représentations automorphes, on a les égalités suivantes :
		\[
		\langle \Pi,s \rangle_{\varrho} = \prod_v \langle \Pi_v, s_v \rangle_{\varrho_v, z_v} = \langle \Pi_p, s_p \rangle_{\varrho_p, z_p}  \prod_{v \neq p} \langle \Pi_v, s_v \rangle_{\varrho_v, z_v} = 1, \quad \forall s \in S_{\overset{\bullet}{\phi}}^{\natural},
		\]
		et	
		\[
		\langle \Pi',s \rangle_{\varrho} = \prod_v \langle \Pi'_v, s_v \rangle_{\varrho_v, z_v} = \langle \Pi'_p, s_p \rangle_{\varrho_p, z_p}  \prod_{v \neq p} \langle \Pi_v, s_v \rangle_{\varrho_v, z_v} = 1, \quad \forall s \in S_{\overset{\bullet}{\phi}}^{\natural}.
		\]
		On en déduit donc que :
		\[
		\langle \Pi_p, s_p \rangle_{\varrho_p, z_p} = \langle \Pi'_p, s_p \rangle_{\varrho_p, z_p} \quad \forall s \in S_{\overset{\bullet}{\phi}}^{\natural}.
		\]
		
		D'autre part, le morphisme $ S_{\overset{\bullet}{\phi}} \longrightarrow S_{\overset{\bullet}{\phi}_p} $ est un isomorphisme et on a
		\[
		\langle \Pi_p, s_p \rangle_{\varrho_p, z_p} = \langle \Pi'_p, s_p \rangle_{\varrho_p, z_p} \quad \forall s \in S_{\overset{\bullet}{\phi}_p}^{\natural}.
		\]
		Le point $2$ de \ref{itm: local} implique que $\Pi_p \simeq \Pi'_p$, autrement dit on a $ \Pi \simeq \Pi' $.
		
		Considérons le cas des groupes de similitudes unitaires. Par le lemme 4.1.2 de \cite{LJ}, nous pouvons étendre $ \Pi $ à une représentation automorphe cuspidale algébrique $ \overline{\Pi} $ de $ \overset{\bullet}{G}(\A) $. De plus, nous pouvons supposer que $ \overline{\Pi} $ est cohomologique puisque $ \Pi $ l'est.
		
		Considérons la suite exacte
		\[
		1 \longrightarrow \overset{\bullet}{U} \longrightarrow \overset{\bullet}{G} \xrightarrow{c} \Gm \longrightarrow 1
		\]
		
		Comme $ (\overline{\Pi}_p)_{| \overset{\bullet}{U}(\Q_p)} \simeq (\widetilde{\pi}_p)_{| \overset{\bullet}{U}(\Q_p)} $, il existe donc un caractère $ \lambda : \Q^{\times}_p \to \C^{\times}$ tel que $\overline{\Pi}_p \otimes (\lambda \circ c) \simeq \widetilde{\pi}_p $.
		
		Il y a un isomorphisme de groupes topologiques
		\begin{align*}
			\Q^{\times} \times \R_{> 0} \times \prod \Z_v^{\times} & \longrightarrow \Gm(\A) \\
			(r, t, (u_v)) & \longmapsto (rt, ru_2, ru_3, \cdots).
		\end{align*}
		
		D'où il y a un caractère $ \overline {\Lambda} $ de $ \Q^{\times} \times \R_{> 0} \times \prod \Z_v^{\times} $ tel que $ \overline{\Lambda} $ est trivial sur $ \Q^{\times} \times \R_{> 0} $, $ \overline{\Lambda}_{| \Z_p^{\times}} \equiv \lambda_ {| \Z_p^{\times}} $ et $ \overline{\Lambda} (-1, 1, (-1)) = 1 $. Ce caractère descend en un caractère de Hecke $ \Lambda $ de $ \Gm(\Q) \backslash \Gm(\A) $ tel que $ \Lambda_p = \lambda \otimes \omega $ où $ \omega: \Q^{\times}_p \longrightarrow \C^{\times} $ est un caractère non ramifié et $ \Lambda_{\infty} $ est trivial. En particulier si on note $ \widetilde{\Pi}: = \overline{\Pi} \otimes (\Lambda \circ c) $ alors il est également cohomologique (puisque $ \overline{\Pi} $ l'est) et $ \widetilde{\Pi}_p \simeq \widetilde{\pi}_p \otimes (\omega \circ c) $.
		
		Grâce à la proposition \ref{itm: de U à GU}, la dernière assertion se déduit de celui pour les groupes unitaires.
	\end{proof}
	
	\section{Un cas PEL de la conjecture de Kottwitz} \label{itm: cohomologie}
	
	\subsection{Notations} \phantomsection \label{itm : section notations}
	Dans cette section, nous précisons les notations du \ref{text : thm} de l'introduction, concernant $r_{\mu}$ puis les $\tau_i$. En vue du théorème \ref{itm : thm partiel} de l'appendice, on ne suppose pas pour l'instant que $d = 1$.
	
	Rappelons que $ \mathcal{D}_{\Q_p} = (F_p, *, V, \langle \cdot | \cdot \rangle, \mu, b)$ est une donnée de Rapoport-Zink basique de type PEL unitaire non ramifiée simple de signature $(1, n-1), (0, n), \cdots (0, n)$. On note $ G $ le groupe de similitudes unitaires $p$-adique quasi-déployé (en $n$ variables) associé ainsi que $(\mathcal{M}_{K_p})_{K_p}$ la tour d'espaces de Rapoport-Zink. La cohomologie de cet espace est munie d'une action de $G(\Q_p) \times J_b(\Q_p) \times W_{E_p} $\footnote{Puisque $b$ est basique et $n$ est impair, on a $G(\Q_p) = \J_b(\Q_p)$} où on rappelle que $F_p$ est aussi le corps de définition de $\mu$, autrement dit $E_p = F_p$.
	
	La représentation $r_{\mu}$ de $\prescript{L}{}{G} = \widehat{G} \rtimes W_{F_p}$ a une action triviale de $W_{F_p}$, et $r_{\mu |_{\widehat{G}}}$ est donnée par la formule
	\[
	r_{\mu |_{\widehat{G}}}  = \bigotimes_{\tau \in \Phi} \Big( \overset{p_{\tau}}{\bigwedge} St_n \Big)^* \otimes (St_1)^{-1} = (St_n)^* \otimes (St_1)^{-1}
	\]
	où $St_i$ désigne la représentation standard de dimension $i$. En particulier, on notera que $\dim r_{\mu} = n$.
	
	Notons $U_p$ le groupe unitaire défini sur $\Q_p$ noyau du facteur de similitude. Le groupe de Langlands dual est $ \prescript{L}{}{U}_p = \big( \prod_{\tau \in \Phi} GL_n(\C) \big) \rtimes W_{\Q_p} $. On a également un morphisme de $L$-groupes
	\[
	\prescript{L}{}{G}_p = \big( \prod_{\tau \in \Phi} GL_n(\C) \times \C^{\times} \big) \rtimes W_{\Q_p} \longrightarrow \big( \prod_{\tau \in \Phi} GL_n(\C) \big) \rtimes W_{\Q_p} = \prescript{L}{}{U}_p
	\]
	
	Étant donné un $L$-paramètre discret $ \varphi : W_{\Q_p} \longrightarrow \big( \prod_{\tau \in \Phi} GL(V) \times \C^{\times} \big) \rtimes W_{\Q_p} $ où $V \simeq \C^n$, on obtient un $L$-paramètre discret $ \widetilde{\varphi} : W_{\Q_p} \longrightarrow \big( \prod_{\tau \in \Phi} GL(V) \big) \rtimes W_{\Q_p} $ de $U_p$. 
	
	Puisque $ U_p = \Res_{F^+_p/\Q_p} (U_{F_p / F^+_p}) $ (où $F_p^{+}$ est le sous-corps de $F_p$ fixé par l'involution $*$) le lemme de Shapiro
	\[
	H^1(W_{\Q_p}, \prod_{\tau \in \Phi} GL(V)) \simeq H^1 (W_{F_p^+}, GL(V))
	\]
	nous donne un $L$-paramètre de $U_{F_p / F^+_p}$ :
	\[
	\widetilde{\varphi}_{F_p^+} : W_{F_p^+} \longrightarrow  GL(V) \rtimes W_{F^+_p} = \prescript{L}{}{U_{F_p / F_p^{+}}}.
	\] 
	
	D'après la proposition \ref{itm : C.Moeglin}, le $L$-paquet $\Pi_{\varphi}(G(\Q_p))$ est un paquet cuspidal. Comme dans la section précédente, pour chaque $\kappa \in \{ \pm 1 \}$ et $\chi_{\kappa} \in \mathcal{Z}^{\kappa}_{F_p}, $ il y a un morphisme de changement de base (cf \cite{Mok}, p.9)
	\begin{align*} 
		\eta_{\chi_{\kappa}, *} : \Phi (U_{F_p/F_p^{+}}) &\longrightarrow \Phi (GL_{F_p}(n)) \\
		\phi & \longmapsto  \eta_{\chi_{\kappa}} \circ \phi
	\end{align*}
	de plus, si $\kappa = 1$ et $\chi_{\kappa} = 1$ alors $\eta_{\chi_{\kappa}} \circ \phi$ est juste la restriction de $\phi$ sur $L_{F_p} = W_{F_p} \times SU(2) $.
	
	La restriction de $\widetilde{\varphi}_{F_p^+}$ sur $W_{F_p}$ se décompose donc en une somme de $L$-paramètres discrets simples de groupes linéaires généraux $\widetilde{\varphi}_{F_p^+ | F_p} = \widetilde{\varphi}^{n_1}_{1} \oplus \cdots \oplus \widetilde{\varphi}^{n_r}_{r}$. %On en déduit que $ V = \oplus_{i = 1}^r V_i $ et $\widetilde{\varphi}_{i, F_p^+} : W_{F_p^+} \longrightarrow GL(V_i) \rtimes W_{F_p^+}$.
	%On notera $ n_i = \dim V_i $. 
	On a alors une décomposition de $W_{F_p^+ | F_p}$-représentations $ V = \bigoplus_{i=1}^{r} V_i $ où $V_i$ correspond à $\widetilde{\varphi}_i^{n_i}$.
	
	Cela implique une décomposition
	\[
	r_{\mu} \circ \widetilde{\varphi}_{F_p} = \bigoplus_{ i = 1}^r r_{\mu_{i}} \circ \widetilde{\varphi}^{n_i}_{i}
	\]
	où $\mu_i = (1, n_i - 1), (0, n_i), \cdots, (0, n_i)$.
	
	D'autre part, on a
	\[
	S_{\widetilde{\varphi}} \simeq S_{\widetilde{\varphi}_{F_p^+}} \approxeq \prod_{i = 1}^{r} O(1, \C) = \prod_{i = 1}^{r} \Z / 2\Z \approxeq S^{\natural}_{\widetilde{\varphi}_{F_p^+}}. %\simeq S^{\natural}_{\varphi}.
	\]
	
	\begin{definition} \phantomsection \label{itm : rep}
		Pour $ 1 \leq i \leq r $, on définit un caractère $\tau_i$ de $S_{\widetilde{\varphi}}$ par la formule
		\[
		\tau_i ( \underbrace{1, \cdots, 1, -1}_{j}, 1, \cdots, 1) = \left\lbrace \begin{array}{ccc}
			-1 & si & i=j \\
			1 & si & 1 \leq i \neq j \leq r
		\end{array} \right.
		\]	
	\end{definition}
	L'action de $S_{\widetilde{\varphi}}$ sur $V_i$ se factorise par le caractère $\tau_i$, on en déduit que $ r_{\mu_i} \circ \widetilde{\varphi}^{n_i}_{i} = \hom_{ S_{\widetilde{\varphi}} } \big( \tau_i, r_{\mu} \circ \widetilde{\varphi}_{F_p} \big) $ et donc
	
	\[
	r_{\mu} \circ \widetilde{\varphi}_{F_p} = \bigoplus_{i = 1}^r \hom_{S_{\widetilde{\varphi}}} (\tau_i, r_{\mu} \circ \widetilde{\varphi}_{F_p}).
	\]
	
	Au niveau du groupe de similitudes unitaires, puisque $\C^{\times}$ est abélien, on a une décomposition de $ W_{F_p}$-module
	\[
	r_{\mu} \circ \varphi_{F_p} = \bigoplus_{i = 1}^r \hom_{S_{\varphi}} (\tau_i, r_{\mu} \circ \varphi_{F_p}) = \bigoplus_{i = 1}^r  r_{\mu_i} \circ \varphi^{n_i}_{i, F_p}.
	\]

	\subsection{Cohomologie d'intersection des variétés de Shimura d'après \cite{Mo}}
	Dans ce paragraphe, on va utiliser des résultats dans \cite{Mo}. La lettre $G$ désigne un groupe de similitudes unitaires en $n$ variables associé à une forme hermitienne définie sur une extension quadratique imaginaire $ \overset{\bullet}{F} / \Q $.
	
	%\textbf{Cohomologie d'intersection}
	
	Pour tout $K \subset G(\A_f)$ un sous-groupe compact ouvert suffisamment petit, on a une variété $\Sh_K$. Lorsque $G$ n'est pas anisotrope modulo son centre, $\Sh_K$ n'est pas compacte. Dans ce cas, on a la compactification minimale (ou de Satake-Baily-Borel) de $\Sh_K$
	\[
	j : \Sh_K \longrightarrow \Sh_K^*
	\]  
	telle que $\Sh_K^*$ est une variété projective normale dont $\Sh_K$ est un ouvert dense. En général $\Sh_K^*$ n'est pas lisse et on utilise la cohomologie d'intersection telle qu'elle est étudiée notamment dans \cite{Mo}.
	
	Rappelons les relations entre la cohomologie étale (à support compact ou non), la cohomologie $L^2$ et la cohomologie d'intersection.    
	
	Comme auparavant, pour $\xi$ une représentation algébrique irréductible de dimension finie de $G$, on peut définir un système local $\mathcal{L}_{\xi}$ sur $\Sh_K$ ainsi qu'un faisceau d'intersection $\mathcal{IC}_{\xi}$ sur $\Sh_K^*$. On dispose donc des groupes de cohomologie étale $H^i_c(\Sh_K, \mathcal{L}_{\xi})$, de $L^2$-cohomologie $H^i_{(2)}(\Sh_K, \mathcal{L}_{\xi})$ et de cohomologie d'intersection $ IH^i(\Sh_K, \mathcal{IC}_{\xi})$. En prenant la limite inductive sur les sous-groupes compacts ouverts suffisamment petits $K$, on dispose des groupes de cohomologie étale (resp. de $L^2$-cohomologie, resp. de cohomologie d'intersection) à niveau infini $ H^i_c(\Sh, \mathcal{L}_{\xi}) $ (resp. $H^i_{(2)}(\Sh, \mathcal{L}_{\xi})$, resp. $ IH^i(\Sh, \mathcal{IC}_{\xi})$).
	
	%Notons $ \Sh =  \varprojlim_U \Sh_U $ la variété de Shimura au niveau infini où $U$ sont prises dans l'ensemble des sous-groupes ouverts compacts suffisamment petit. 
	
	Il y a un diagramme commutatif entre ces groupes de cohomologie.
	\begin{center} 
		%	\captionof{figure}{diagramme 1}
		%	\label{1} 
		\begin{tikzpicture}[scale = 1]
			\draw (-1,0) node {$H_c^i(\Sh, \mathcal{L}_{\xi})$};
			\draw (4,0) node {$ IH^i(\Sh, \mathcal{IC}_{\xi}) $};
			\draw (4,-1.5) node {$ H^i_{(2)}(\Sh, \mathcal{L}_{\xi}) $};
			\draw [->] (4, -0.4) -- (4,-1.15)node[midway, right]{$\approx$};
			\draw [->] (0.2,0) -- (2.75,0) node [midway, above]{$(1)$};
			\draw [->] (-0.8, -0.4) -- (2.75, -1.5) node [midway, above]{$(2)$};
		\end{tikzpicture}
	\end{center}
	
	Ces morphismes sont $G(\A_f)$-équivariants et le morphisme $(1)$ est de plus $\Gamma$-équivariant (où $\overset{\bullet}{E}$ est le corps de définition et $\Gamma$ = Gal$(\overline{\overset{\bullet}{E}}/\overset{\bullet}{E})$ ). D'après la conjecture de Zucker (démontrée par Looijenga \cite{Lo}, Looijenga-Rapoport \cite{LoR} et Saper-Stein \cite{SS}), le morphisme vertical est en fait un isomorphisme. 
	
	On exploitera le morphisme $(2)$ afin de comparer $H_c^i(\Sh, \mathcal{L}_{\xi})$ et $IH^i(\Sh, \mathcal{IC}_{\xi})$.
	\begin{proposition} \label{itm : comparer}
		Supposons maintenant $\xi$ régulier. Soit $\Pi = \Pi_{\infty} \otimes \Pi_f$ une représentation automorphe cuspidale de $G(\A)$ qui est $\xi$-cohomologique et que $ \Pi_v $ est supercuspidale pour une place finie $v$, alors $H^i(\Sh, \mathcal{L}_{\xi})$ est concentré en degré moitié et
		\[
		H^d_c(\Sh, \mathcal{L}_{\xi}) [\Pi_f] \approx H^d_{(2)}(\Sh, \mathcal{L}_{\xi})[\Pi_f].
		\]
		
		Puisque le morphisme $(1)$ est $\Gamma$-équivariant, on en déduit qu'il y a un isomorphisme $\Gamma$-équivariant 
		\[
		H^d_c(\Sh, \mathcal{L}_{\xi}) [\Pi_f] \approx IH^d(\Sh, \mathcal{IC}_{\xi})[\Pi_f]
		\]
		où $d$ est la dimension de la variété de Shimura.
	\end{proposition}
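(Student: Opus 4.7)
The plan is to deduce the proposition from Matsushima's formula together with the regularity hypothesis on $\xi$ and standard properties of boundary cohomology.

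First, I would establish the concentration in middle degree using Matsushima's decomposition
\[
H^*_{(2)}(\Sh, \mathcal{L}_{\xi}) \simeq \bigoplus_{\Pi} m(\Pi)\, \Pi_f \otimes H^*(\mathfrak{g}, K_{\infty}; \Pi_{\infty} \otimes \xi),
\]
where $\Pi$ runs over the automorphic representations of $G(\A)$ occurring discretely. Applied to a cuspidal $\Pi$ that is $\xi$-cohomological, the Vogan-Zuckerman classification combined with the regularity of $\xi$ forces $\Pi_{\infty}$ to be a discrete series representation, and consequently $H^*(\mathfrak{g}, K_{\infty}; \Pi_{\infty} \otimes \xi)$ is concentrated in the middle degree $d = \dim \Sh$. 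Via Zucker's isomorphism one obtains concentration of $IH^*(\Sh, \mathcal{IC}_{\xi})[\Pi_f]$ in degree $d$, and by the comparisons below the same holds for $H^*_c(\Sh, \mathcal{L}_{\xi})[\Pi_f]$.

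Next, I would compare $H^*_c$ with $H^*_{(2)}$ on the $\Pi_f$-isotypic component. Using the Borel-Serre compactification $\overline{\Sh}$ and the long exact sequence relating $H^*_c(\Sh, \mathcal{L}_{\xi})$, $H^*(\overline{\Sh}, \mathcal{L}_{\xi}) \simeq H^*(\Sh, \mathcal{L}_{\xi})$ and the boundary cohomology $H^*(\partial \overline{\Sh}, \mathcal{L}_{\xi})$, the key input is the theorem of Franke (refined by Franke-Schwermer) that for regular $\xi$ the $\Pi_f$-isotypic component of the boundary cohomology vanishes whenever $\Pi$ is cuspidal, because such a $\Pi_f$ cannot appear in any Eisenstein series induced from a proper parabolic. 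This forces the natural map
\[
H^*_c(\Sh, \mathcal{L}_{\xi})[\Pi_f] \longrightarrow H^*(\Sh, \mathcal{L}_{\xi})[\Pi_f]
\]
to be an isomorphism. On the other hand, the morphism $(2)$ of the diagram factors as $H^*_c \twoheadrightarrow \mathrm{Im}(H^*_c \to H^*) \hookrightarrow H^*_{(2)}$ by the theorem of Borel-Garland, so combining these isomorphisms one gets $H^d_c(\Sh, \mathcal{L}_{\xi})[\Pi_f] \simeq H^d_{(2)}(\Sh, \mathcal{L}_{\xi})[\Pi_f]$. Composing with Zucker's $\Gamma$-equivariant isomorphism $H^*_{(2)} \simeq IH^*$ yields the $\Gamma$-equivariant isomorphism with intersection cohomology.

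The main obstacle is establishing the vanishing of the $\Pi_f$-isotypic component in the boundary cohomology; it is precisely here that the regularity of $\xi$ and the cuspidality of $\Pi$ combine, through Franke's description of the cohomology of Shimura varieties in terms of automorphic forms, to exclude any contribution from the boundary strata. Once this vanishing is granted, everything else is formal from Matsushima, Zucker and Borel-Garland.
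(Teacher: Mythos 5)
Your argument is correct and reaches the same conclusion, but the mechanism you use to compare $H^d_c(\Sh,\mathcal{L}_{\xi})[\Pi_f]$ with $H^d_{(2)}(\Sh,\mathcal{L}_{\xi})[\Pi_f]$ is genuinely different from the paper's. The first step (Matsushima plus Vogan--Zuckerman plus regularity of $\xi$ forcing $\Pi_{\infty}$ to be a discrete series, hence concentration of the $(\mathfrak{g},K_{\infty})$-cohomology in middle degree) is the same. For the comparison itself, the paper follows Mokrane--Tilouine: it uses the chain of maps $H^{\bullet}_{cusp}\to H^{\bullet}_{c}\to H^{\bullet}_{(2)}\to H^{\bullet}$, Borel's injectivity of $H^{\bullet}_{cusp}\to H^{\bullet}$, and the observation that for regular $\xi$ every discrete-spectrum representation contributing to $H^{\bullet}_{(2)}$ is tempered at infinity and therefore cuspidal, so that the Matsushima decompositions of $H^{\bullet}_{cusp}$ and $H^{\bullet}_{(2)}$ coincide. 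You instead go through the Borel--Serre boundary: Franke's description kills the $\Pi_f$-isotypic part of $H^{\bullet}(\partial\overline{\Sh},\mathcal{L}_{\xi})$, whence $H^{\bullet}_c[\Pi_f]\simeq H^{\bullet}[\Pi_f]$, and Borel--Garland places the interior cohomology inside $H^{\bullet}_{(2)}$. Your route has the advantage of making explicit the injectivity of $H^d_c[\Pi_f]\to H^d_{(2)}[\Pi_f]$, i.e.\ it controls the kernel of $H^d_c\to H^d$, a point the paper's sketch only obtains implicitly; the price is the heavier input of Franke's theorem. One step you must make precise: the vanishing of $H^{\bullet}(\partial\overline{\Sh},\mathcal{L}_{\xi})[\Pi_f]$ does not follow from cuspidality of $\Pi$ alone (CAP representations are cuspidal yet share their unramified Hecke eigensystem with constituents of induced representations); it is the regularity of $\xi$ --- equivalently the fact that $\Pi_{\infty}$ is a discrete series, which excludes CAP representations and constrains, via Kostant's theorem, the infinitesimal characters occurring in the boundary cohomology --- that rules out any coincidence of $\Pi_f$ with a class supported on the boundary. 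You gesture at this combination but the mechanism should be spelled out.
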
 
	Dans \cite{MT} proposition $1$, Mokrane et Tilouine ont montré ce résultat pour les groupes de similitudes symplectiques mais leur démonstration s'adapte au cas des groupes de similitudes unitaires. On va rappeler brièvement leur argument.
	
	\begin{proof}
		Notons $ \mathcal{C}^{\infty}(G_{\Q}\setminus G(\A), \C ) $ l'espace des fonctions localement constantes et $ \mathcal{C}^{\infty}_c(G_{\Q}\setminus G(\A), \C ) $ son sous-espace des fonctions à support compact. On note aussi $ L^2(G_{\Q}\setminus G(\A), \C ) $ l'espace des fonctions de carré intégrable et $ L^2_0(G_{\Q}\setminus G(\A), \C ) $ son sous-espace consistant des fonctions cuspidales ainsi que $\mathcal{C}^{\infty}_{cusp} = \mathcal{C}^{\infty}_{c} \bigcap L^2_0$ et $\mathcal{C}^{\infty}_{(2)} = \mathcal{C}^{\infty} \bigcap L^2 $.
		
		L'inclusion d'espaces
		\[
		\mathcal{C}^{\infty}_{cusp}(G_{\Q} \ G(\A), \C ) \subset \mathcal{C}^{\infty}_c(G_{\Q} \ G(\A), \C ) \subset \mathcal{C}^{\infty}_{(2)}(G_{\Q} \ G(\A), \C ) \subset \mathcal{C}^{\infty}(G_{\Q} \ G(\A), \C )
		\]
		implique une application
		\[
		H^{\bullet}_{cusp}(\Sh, \mathcal{L}_{\xi}) \longrightarrow H^{\bullet}_c(\Sh, \mathcal{L}_{\xi}) \longrightarrow H^{\bullet}_{(2)}(\Sh, \mathcal{L}_{\xi}) \longrightarrow H^{\bullet}(\Sh, \mathcal{L}_{\xi})
		\]	
		qui est une injection d'après \cite{Bo74}.
		
		D'autre part on a 
		\[
		H^{\bullet}_{cusp}(\Sh, \mathcal{L}_{\xi}) = \bigoplus_{\pi} \pi_f \otimes H^{\bullet} (\Lie G(\R), K_{\infty}, \pi_{\infty}^{K_{\infty}} \otimes \mathcal{L}_{\xi} ),
		\]
		où $\pi = \pi_f \otimes \pi_{\infty}$ varie dans l'ensemble des classes d'isomorphisme de représentations cuspidales.
		
		Et de plus, on a
		\[
		H^{\bullet}_{(2)}(\Sh, \mathcal{L}_{\xi}) = \bigoplus_{\pi} \pi_f \otimes H^{\bullet} (\Lie G(\R), K_{\infty}, \pi_{\infty}^{K_{\infty}} \otimes \mathcal{L}_{\xi} )
		\]
		où $\pi$ varie dans le spectre discret de $L^2(Z_{\A}G_{\Q} \ G_{\A}, \omega)$ et $\omega$ est le caractère central de $\xi^{\vee}$.
		
		Comme $\xi$ est régulier alors $\pi_{\infty}$ est une série discrète de caractère infinitésimal $\xi$, en particulier $\pi_{\infty}$ est tempérée. %La représentation automorphe $\pi$ apparait dans la partie discrète et est tempérée en une place : $\pi$ est donc une représentation cuspidale. On en déduit l'égalité
		%\[
		%H^d_{cusp}(\Sh, \mathcal{L}_{\xi}) [\pi_f] \approx IH^d(\Sh, \mathcal{IC}_{\xi})[\pi_f].
		%\]
		
		%En particulier, pour la représentation cuspidale $\Pi = \Pi_{ \infty } \otimes \Pi_f $, on a l'égalité
		Puisque la composante en $v$ de $ \Pi_f $ est supercuspidale, on en déduit que si $ \pi_{\infty} \otimes \Pi_f $ est une représentation automorphe, elle est cuspidale. On a alors l'égalité
		\[
		H^d_{cusp}(\Sh, \mathcal{L}_{\xi}) [\Pi_f] \approx H^d_c(\Sh, \mathcal{L}_{\xi}) [\Pi_f] \approx IH^d(\Sh, \mathcal{IC}_{\xi})[\Pi_f].
		\]
		%	Comme le morphisme $(1)$ est $\Gamma$-équivariant, on obtient un isomorphisme $\Gamma$-équivariant  
		%	\[
		%	H^d_c(\Sh_U, \mathcal{L}_{\xi}) [\Pi_f] \approx IH^d(\Sh_U, \mathcal{IC}_{\xi})[\Pi_f].
		%	\]

	\end{proof}
	
	%\textbf{Calcul de la cohomologie d'intersection}
	
	\begin{notation}
		Pour $n_1, \cdots, n_r \in \N$, notons 
		\[
		H = G \big( U^*(n_1) \times \cdots \times U^*(n_r) \big) = \big\{ (g_1, \cdots, g_r) \in G^*(n_1) \times \cdots \times G^*(n_r) | c(g_1) = \cdots = c(g_r) \big\},
		\]
		où $G^*(n_i)$ désigne le groupe de similitudes unitaires quasi-déployé en $n_i$ variables. De plus on a 
		\[
		\widehat{H} = \C^{\times} \times GL_{n_1}(\C) \times \cdots \times GL_{n_r}(\C).
		\]
	\end{notation}
	Tout d'abord, on voudrait donner une description des triplets endoscopiques elliptiques $(H_1, s, \eta)$ de $H$ au sens de \cite{Kot84} 7.3, 7.4.
	
	\begin{proposition}  (\cite{Mo} prop 2.3.1)
		Pour $i \in \{ 1, \cdots, r \}$, soient $n_i^{+}, n_i^{-} \in \N$ tels que $ n_i =  n_i^{+} + n_i^{-}$. Supposons que $n_1^- + \cdots + n_r^-$ est pair. Posons
		\[
		s = \big(1, \diag(\underbrace{1,\cdots,1}_{n_1^+}, \underbrace{-1, \cdots, -1}_{n_1^-}), \cdots, \diag(\underbrace{1,\cdots,1}_{n_r^+}, \underbrace{-1, \cdots, -1}_{n_r^-}) \big) \in \widehat{H}
		\] 	
		\[
		H_1 = G \big( U^*(n_1^+) \times U^*(n_1^-) \times \cdots \times U^*(n_r^+) \times U^*(n_r^-) \big) 
		\]
		et définissons 
		\[
		\eta : \widehat{H}_1 = \C^{\times} \times GL_{n_1^+}(\C) \times GL_{n_1^-}(\C) \times \cdots \times GL_{n_r^+}(\C) \times GL_{n_r^-}(\C) \longrightarrow \widehat{H} = \C^{\times} \times GL_{n_1}(\C) \times \cdots \times GL_{n_r}(\C)
		\]
		par la formule
		\[
		\eta \big((\lambda, g_1^+,g_1^-, \cdots,g_r^+, g_r^-)\big) = \big( \lambda, \diag(g_1^+, g_1^-), \cdots, \diag(g_r^+, g_r^-) \big).
		\]
		
		Alors $\big(H_1, s, \eta \big)$ est un triplet endoscopique elliptique pour $H$. De plus, les triplets endoscopiques elliptiques de $H$ définis par $\big((n_1^+, n_1^-), \cdots, (n_r^+, n_r^-) \big)$ et $\big((m_1^+, m_1^-), \cdots, (m_r^+, m_r^-) \big)$ sont isomorphes si et seulement si pour tout $i \in \{1, \cdots, r \}$, $(n_i^+, n_i^-) = (m_i^+, m_i^-)$ ou $(n_i^+, n_i^-) = (m_i^-, m_i^+)$. Inversement, tout triplet endoscopique elliptique pour $H$ est isomorphe à un des triplets définis ci-dessus. 
	\end{proposition}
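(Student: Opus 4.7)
Le plan consiste \`a ramener le probl\`eme \`a un calcul de centralisateurs semi-simples dans $\widehat{H}$, puis \`a une v\'erification explicite de la compatibilit\'e galoisienne.

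D'abord je v\'erifierais que le triplet $(H_1, s, \eta)$ ainsi construit est bien endoscopique elliptique au sens de \cite{Kot84} 7.3, 7.4. L'\'el\'ement $s$ \'etant diagonal dans chaque bloc $GL_{n_i}(\C)$ avec valeurs propres $\pm 1$, son centralisateur connexe dans $\widehat{H}$ est pr\'ecis\'ement l'image de $\eta$, ce qui donne la premi\`ere condition. Pour la compatibilit\'e galoisienne, je comparerais l'action de $\Gamma = \Gal(\overline{\Q}/\Q)$ sur $\widehat{H}$ (qui passe par la conjugaison par l'\'el\'ement antidiagonal d\'efinissant la structure quasi-d\'eploy\'ee) avec celle induite sur $\widehat{H}_1$ via $\eta$ : la condition de parit\'e $n_1^- + \cdots + n_r^- \equiv 0 \pmod{2}$ assure pr\'ecis\'ement que la torsion ext\'erieure se rel\`eve au facteur de similitudes, et conduit \`a l'inclusion $(Z(\widehat{H}_1)^\Gamma)^0 \subset Z(\widehat{H})$ caract\'erisant l'ellipticit\'e.

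Pour la classification inverse, je partirais d'un triplet endoscopique elliptique $(H_1', s', \eta')$ quelconque. Quitte \`a conjuguer par un \'el\'ement de $\widehat{H}$, on peut supposer que $s'$ est diagonal dans chaque facteur $GL_{n_i}(\C)$ et, quitte \`a multiplier par un \'el\'ement central, on peut supposer $s'$ de composante triviale sur le facteur $\C^\times$. L'ellipticit\'e combin\'ee au fait que $\widehat{H_1'}$ doit \^etre dual d'un groupe r\'eductif connexe quasi-d\'eploy\'e sur $\Q$ force chaque composante de $s'$ \`a avoir valeurs propres dans $\{\pm 1\}$ : sinon on obtiendrait des facteurs $GL_m$ avec $m \geq 2$ dans le centralisateur, et le tore central invariant par $\Gamma$ exc\'ederait $Z(\widehat{H})^\Gamma$. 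On en d\'eduit des partitions $n_i = n_i^+ + n_i^-$, uniques \`a l'\'echange $(n_i^+, n_i^-) \leftrightarrow (n_i^-, n_i^+)$ pr\`es, cet \'echange correspondant \`a la multiplication de la $i$-i\`eme composante de $s'$ par $-1$ qui pr\'eserve le centralisateur.

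Le point le plus d\'elicat me semble \^etre l'apparition de la contrainte de parit\'e $\sum_i n_i^- \in 2\Z$. Contrairement au cas des groupes unitaires purs, le passage au groupe de similitudes introduit un facteur central $\C^\times$ suppl\'ementaire dans $\widehat{H}$, et il faudra tracer soigneusement comment le cocaract\`ere $s'$ passe au quotient par ce centre de fa\c{c}on $\Gamma$-compatible : sans la parit\'e, le quasi-co-caract\`ere attach\'e \`a $s'$ ne se rel\`everait pas en un cocaract\`ere du groupe de similitudes sur $\Q$, contredisant l'existence d'une $\Q$-forme quasi-d\'eploy\'ee $H_1'$. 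Une fois cette obstruction identifi\'ee et r\'esolue, la v\'erification que tout triplet est isomorphe \`a l'un de ceux construits et que les seules isomorphies sont celles indiqu\'ees se fait par un calcul direct avec les repr\'esentants diagonaux.
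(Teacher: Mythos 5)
Attention : la proposition est \'enonc\'ee dans l'article avec la seule r\'ef\'erence \`a \cite{Mo} prop.~2.3.1, et aucune d\'emonstration n'y est donn\'ee. Vous ne pouvez donc pas vous comparer \`a une preuve du pr\'esent article ; votre esquisse reconstruit l'argument standard que l'on trouve dans Morel, et elle en suit globalement la structure (calcul du centralisateur connexe de $s$, v\'erification de la $\Gamma$-compatibilit\'e et de la contrainte de parit\'e via le facteur de similitude, puis classification par diagonalisation de $s'$).

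Un point de votre argument m\'erite toutefois d'\^etre corrig\'e. Pour exclure les valeurs propres $\alpha \neq \pm 1$, vous \'ecrivez que l'on \og obtiendrait des facteurs $GL_m$ avec $m \geq 2$ dans le centralisateur\fg ; ce n'est pas la bonne obstruction. Ce qui fait \'echouer l'ellipticit\'e, ce n'est pas la taille $m$ d'un bloc, c'est l'appariement : si $\alpha \neq \pm 1$ appara\^it avec multiplicit\'e $m$, alors $\alpha^{-1}$ aussi (par $\Gamma$-invariance de $s'$ modulo le centre), et le centralisateur contient une paire $GL_m(\C) \times GL_m(\C)$ que l'action de Galois \'echange. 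Le tore fixe de son centre est alors la diagonale $\C^\times$, qui n'est pas contenue dans $Z(\widehat H)$, et ceci \emph{m\^eme pour} $m = 1$. C'est cet argument d'\'echange de blocs, et non une borne sur $m$, qui force $\alpha \in \{\pm 1\}$. Le reste de l'esquisse (compatibilit\'e galoisienne du facteur $\C^\times$ de similitude, origine de la contrainte $\sum_i n_i^- \equiv 0 \pmod 2$ dans le calcul de $\sigma(s)$ via $\det s_0$, classification \`a \'echange $(n_i^+, n_i^-) \leftrightarrow (n_i^-, n_i^+)$ pr\`es) est correct dans ses grandes lignes ; pour une r\'ef\'erence compl\`ete il faut toutefois renvoyer au calcul explicite de \cite{Mo}.
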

	\begin{definition}
		Posons $\mathcal{E}^0(G)$ l'ensemble des triplets endoscopiques elliptiques $(H, s, \eta_0)$ pour $G$ tels que $H$ n'est pas une forme intérieure de $G$. Pour tout $(H, s, \eta_0) \in \mathcal{E}^0(G) $, fixons un $L$-morphisme $ \eta : \prescript{L}{}{H} \longrightarrow \prescript{L}{}{G}$ étendant $\eta_0$. On définit de même l'ensemble $\mathcal{E}^0(H)$ pour $H = G\big(U^*(n_1) \times \cdots U^*(n_r)\big)$. Posons $\mathcal{F}_G$ l'ensemble des suites $(e_1, \cdots, e_r)$ avec $r \in \N^*$ où $e_1 = (H_1, s_1, \eta_{1, 0}) \in \mathcal{E}^0(G)$ et pour $ i \in \{ 2, \cdots, r \} $, $e_i = (H_i, s_i, \eta_{i, 0}) \in \mathcal{E}^0(H_{i-1})$.
		
		Finalement, on pose 
		\[
		\eta_{\e} = \eta_1 \circ \cdots \circ \eta_r : \prescript{L}{}{H}_{\e} \longrightarrow \prescript{L}{}{G}
		\]
		le morphisme de $L$-groupes correspondant.
	\end{definition} 
	
	Considérons $(\e) = (e_1, \cdots, e_r) \in \mathcal{F}_G$. Supposons ensuite que $(e_1) = (H_1, s_1, \eta_1)$ est le triplet endoscopique elliptique défini par un couple $(n^{+}, n^{-})$ avec $n^{-}$ pair. On écrit
	\[
	H_{\e} = G \Big(U^*(n_1^+) \times \cdots \times U^*(n_r^+) \times U^*(n_1^-) \times \cdots \times U^*(n_s^-) \Big)
	\]
	où l'identification est faite de sorte que $\eta_2 \circ \cdots \circ \eta_r$ envoie $\widehat{U^*(n_1^+)} \times \cdots \times \widehat{U^*(n_r^+)}$ (resp. $\widehat{U^*(n_1^-)} \times \cdots \times \widehat{U^*(n_s^-)}$) dans $\widehat{U^*(n^+)}$ (resp. $\widehat{U^*(n^-)}$).
	
	Maintenant pour $p_1^+, \cdots, p_r^+, p_1^-, \cdots, p_s^- \in \N$ tels que $1 \leq p_i^+ \leq n_i^+$ et $1 \leq p_i^- \leq n_i^-$, on définit un cocaractère de $H_{\e}$ par la formule
	\[
	\mu = (\mu_{p_1^+} \times \cdots \times \mu_{p_r^+} \times \mu_{p_1^-} \times \cdots \times \mu_{p_s^-}) : \G_{m, \C} \longrightarrow H_{\e, \C}. 
	\]
	où $\mu_{p_i^*}$ est le cocaractère de signature $(p_i^*, n_i^* - p_i^*)$.
	\begin{definition}
		Supposons $G$ de signature $(p,n-p)$ à l'infini. Pour chaque $(\e)$ on note $M_{\e}$ l'ensemble des cocaractères $\mu = \mu_{p_1^+, \cdots, p_r^{+}, p_1^-, \cdots, p_s^-}$ de sorte que $ p = p_1^+ + \cdots + p_r^+ + p_1^- + \cdots + p_s^-$. Pour un tel cocaractère on note également $s(\mu) = (-1)^{p_1^- + \cdots + p_s^-}$.
	\end{definition}
	
	Pour chaque $(\e)$, on peut définir les constantes $l(\e)$, $\iota(\e)$ ainsi que $\iota'(\e)$. Ces constantes apparaîtront dans le théorème \ref{itm :  comp iso} ci-dessus mais pour notre but, nous n'aurons pas besoin de les calculer explicitement. Pour une définition précise, le lecteur pourra consulter \cite{Mo} - p.108.   
	\begin{definition}
		Soit $\pi_f = \bigotimes_p' \pi_p$ une représentation irréductible de $G(\A_f)$ de sorte que $\pi_f^{K} \neq 0$ pour $K \subset G(\A_f)$ un niveau et soit $\e \in \mathcal{F}_G$. Notons $R_{\e}(\pi_f)$ pour l'ensemble des classes d'équivalences de représentations irréductibles $\pi_{\e ,f} = \bigotimes_p' \pi_{\e ,f}$ de $H_{\e}(\A_f)$ telles que pour presque tout $p$ où $\pi_f$ et $\pi_{\e, p}$ sont non ramifiées, le morphisme $\eta_{\e} : \prescript{L}{}{H}_{\e} \longrightarrow \prescript{L}{}{G}$ envoie un paramètre de Langlands de $\pi_{\e ,p}$ sur celui de $\pi_p$.
	\end{definition}
	
	Étant donnés $(\e)$ et $f \in C_c^{\infty}(G(\Q_v))$, on note $f^{\e} = (((f^{H_1})^{H_2})\cdots)^{H_r} \in C_c^{\infty}(H_{\e}(\Q_v))$ un transfert endoscopique. Si $f \in C_c^{\infty}(G(\A_f))$ on notera de même $f^{\e} \in C_c^{\infty}(H_{\e}(\A_f))$. \\
	
	Soient $\xi$ une représentation algébrique de $G$ ainsi que $ K \subset G(\A_f) $ un niveau. Posons $ \mathcal{H}_K = \mathcal{H}_K(G(\A_f), K)$. Considérons les groupes de cohomologie suivant ($i \geq 0$)
	\[
	W_i = H^i(\Sh_K^*, \mathcal{IC}_{\xi}).
	\] 
	
	Ce sont des $L$-espaces vectoriels  ( où $L$ est une extension finie de $ \Q_{\ell} $, $ \ell \neq p $) de dimension finie qui possèdent une action de $ \mathcal{H}_K \times Gal(\overline{\Q} \slash \overset{\bullet}{E}) $ où $\overset{\bullet}{E}$ est le corps reflex de la variété de Shimura. Soit $ \iota : L \longrightarrow \C $ une immersion. Dans la suite, par abus de langage, on note également $ W_i $ la représentation $ \iota_{*}(W_i) $. Dans le cas qui nous intéresse, \textit{la signature à l'infini du groupe $G$ est $(1, n-1)$ avec $ n > 2 $, donc le corps reflex $\overset{\bullet}{E}$ est égale à $\overset{\bullet}{F}$}. Comme $W_i$ s'annule si $i$ est assez grand, on définit alors
	\[
	W = \sum_{i \geq 0} (-1)^iW_i
	\] 
	un objet dans le groupe de Grothendieck des représentations de $ \mathcal{H}_K \times Gal(\overline{\Q} \slash \overset{\bullet}{F}) $. On a une décomposition isotypique de $W$ comme $\mathcal{H}_K$-module
	\[
	W = \sum_{\pi_f} W(\pi_f) \otimes \pi_f^K,
	\]
	où la somme est prise sur l'ensemble des classes d'isomorphismes de représentations irréductibles $\pi_f$ de $G(\A_f)$ telles que $\pi_f^K \neq 0 $ et où $W(\pi_f)$ est une $\C$-représentation virtuelle de dimension finie de $Gal(\overline{\Q} \slash \overset{\bullet}{F})$. 
	
	Pour $\xi$ une représentation algébrique fixée de $G$ et pour $\pi_f$ une représentation irréductible de $G(\A_f)$, on peut définir une constante $c_G(\pi_f)$ laquelle est liée aux multiplicités des représentations automorphes ($c_G(\pi_f)$ dépend éventuellement de $\xi$). De même, il y a une constante $c_{\e}(\pi_{\e, f})$ associée à $\pi_{\e, f}$ dont la définition est liée aux multiplicités des représentations automorphes.
	
	Pour $p$ un nombre premier non ramifié, on pose $ \eta_{ \e, p} := \eta_{\e | \widehat{H}_{\e} \rtimes W_{\Q_p} } $. On écrit $ (\eta_{ \e, p})_{simple} : \prescript{L}{}{H}_{\e} \longrightarrow \prescript{L}{}{G} $ pour le $L$-morphisme défini dans \cite{Mo}, page $109$ troisième paragraphe. On note également $ \chi_{\e, p} $ le caractère défini dans le même paragraphe de loc.cit. 
	
	Considérons un nombre premier $p$ ainsi que $\mathcal{P}$ une place de $\overset{\bullet}{F}$ au-dessus de $p$. Notons $\Fr_{\mathcal{P}}$ un relèvement du Frobenius arithmétique. Le théorème suivant, dû à Sophie Morel, nous permet de calculer la trace de $\Fr_{\mathcal{P}}$ sur $W(\pi_f)$ lorsque $p$ est assez grand. 
	
	\begin{théorème} \phantomsection \label{itm :  comp iso} (Théorème 7.2.2 de \cite{Mo})
		Soit $\pi_f$ une représentation irréductible admissible de $G(\mathbb{A}_f)$ telle que $\pi_f^{K} \neq 0$. Il existe alors une fonction $f^{\infty} \in C_c^{\infty}(G(\mathbb{A}_f)) $ de sorte que pour presque tout nombre $p$ premier et pour tout $m \in \Z$,
		\begin{align*} \label{itm : sstar}
			\Tr(\Fr^m_{\mathcal{P}}, W(\pi_f)) =& (N \mathcal{P})^{-\frac{m(n-1)}{2}} c_G(\pi_f) \dim(\pi_f^K) \Tr (r_{\mu_G} \circ \varphi_{\pi_p}(\Fr^m_{\mathcal{P}})) \\
			&+ (N\mathcal{P})^{md/2} \sum_{\underline{e} \in \mathcal{F}_G} (-1)^{l(\underline{e})} \iota(\underline{e}) \sum_{\pi_{\e, f} \in R_{\e}(\pi_f)} c_{\e} (\pi_{\e, f}) \Tr(\pi_{\e, f})((f^{\infty})^{\e}) \\
			& \quad \quad \quad \quad \quad \quad \quad \sum_{\mu \in M_{\e}} (1 - (-1)^{s(\mu)} \cdot \dfrac{\iota'(\e)}{\iota(\e)}) \Tr (r_{ \mu} \circ \varphi_{\pi_{\e ,p} \otimes \chi_{\e, p}} (\Fr^m_{\mathcal{P}}) ) \tag{$\star$}
		\end{align*}
		où la somme est prise pour $\pi_{\e ,f}$ telle que $\pi_{\e ,p} \otimes \chi_{\e, p}$ est non ramifiée.
	\end{théorème}
	\begin{remarque} \phantomsection \label{itm : remarque}
		Les nombres premiers $p$ tels que la formule (\ref{itm : sstar}) du théorème \ref{itm :  comp iso} est vérifiée, sont ceux pour lesquels il existe un ensemble $ p \notin T $ avec $f = h_Tg^T$ où
		\begin{enumerate}
			\item[$\bullet$] $ T \supset \big\{ q \ | \ G_q$ est ramifié, $K_q$ n'est pas maximal$\big\} $.
			\item[$\bullet$] $ h \in \mathcal{H}(G(\A_f), K) $ est une fonction de la forme $h = h_T 1_{K^T}$ satisfaisant les propriétés décrites dans les lignes $14, 15$, p$.111$ de \cite{Mo} (i.e. $h$ sépare les représentations dans $R'$ de loc. cit.).
			\item[$\bullet$] $ g^T \in \mathcal{H}(G(\A_f^T), K^T) $ est une fonction satisfaisant les propriétés décrites dans les lignes $25 - 28$, p$.111$ de loc. cit. (i.e. $g^T$ sépare les représentations dans $R'_{\e}$ de loc.cit.).
			\item[$\bullet$] $ g_p^T = 1_{G(\Z_p)} $.
		\end{enumerate}
	\end{remarque}
	\subsection{Détermination de $W(\pi_f)_{\mathcal{P}}$ dans un cas particulier}
	Le but de ce paragraphe est de montrer un corollaire du théorème \ref{itm :  comp iso}.
	\begin{lemme}
		Soit $p$ un nombre premier tel que $K_p$ est hyper-spécial et $G$, $\pi$ sont non ramifiés en $p$, alors il existe $f^{\infty}$ telle que la formule (\ref{itm : sstar}) est vraie pour $p$. 
		%Choisissons un nombre premier $q$ décomposé assez grand. Soit $\pi$ une représentation automorphe de $G(\A)$ telle que $\pi_q$ est non ramifiée. On peut choisir un niveau $K$ et $f^{\infty}$ de sorte que $\pi_f^K \neq 0$ et l'égalité dans le théorème \ref{itm :  comp iso} reste vraie pour $q$.
	\end{lemme}
	\begin{proof}
		Tout d'abord, considérons l'ensemble $R'$ des représentations irréductibles admissibles $\pi_f'$ de $G(\A_f)$ satisfaisant les conditions suivantes
		\begin{enumerate}
			\item[$\bullet$] $\pi_f' \ncong \pi_f $,
			\item[$\bullet$] $(\pi_f')^K \neq 0$,
			\item[$\bullet$] $W_{\lambda}(\pi_f') \neq 0 $ ou $c_G(\pi_f') \neq 0$.
		\end{enumerate}
		
		Comme $R'$ est fini et $\pi_p$ est non ramifiée, on peut choisir $h \in H(G(\A_f, K))$ telle que $h_p = 1_{G(\Z_p)}$ et
		\begin{enumerate}
			\item[$\bullet$] $\Tr(\pi_f(h)) = \Tr(\pi_f(1_K))$.
			\item[$\bullet$] $\Tr(\pi_f'(h)) = 0$ pour $\pi_f' \in R'$.
		\end{enumerate}
		
		Maintenant, il existe un ensemble $T_0$ de nombres premiers ne contenant pas $p$ mais qui satisfait toutes les conditions décrites dans lignes $16-20$ p$.111$ de \cite{Mo}.	
		
		Ensuite, pour chaque $\e \in \mathcal{F}_G$, on définit un ensemble $R_{\e}'$ comme dans le paragraphe 5, page 111 de loc. cit. Posons $T = T_0 \cup \{ p \}$, le même argument que dans la page $111$ de loc.cit. montre qu'il existe une fonction $g^T \in \mathcal{H}(G(\A_f^T), K^T)$ qui sépare les représentations dans $R_{\e}'$.
		
		Considérons $f^{\infty} = h_{T_0} g^{T_0}$ où $g^{T_0} = 1_{G(\Z_p)} g^T$, on voit que $T_0$ et $f^{\infty}$ satisfont  les trois premières propriétés dans la remarque \ref{itm : remarque} ainsi que $g^{T_0}_p = 1_{G(\Z_p)}$. On en déduit que la formule (\ref{itm : sstar}) est vérifiée pour $f^{\infty}$ et $p$.   
	\end{proof}
	
	\begin{corollaire} \phantomsection \label{itm: galois}
		Supposons $n$ impair et $G$ quasi-déployé en toutes les places finies et de signature $(1, n-1)$ à l'infini. Considérons une représentation automorphe $\pi = \pi_{\infty} \otimes \bigotimes_p' \pi_p$ de $G(\A)$ satisfaisant les conditions suivantes
		\begin{enumerate}
			\item[(i)] Le $L$-paramètre de $\pi$ est de la forme $ \varPsi = (\varPsi^n, \widetilde{\varPsi})$ où $ \varPsi^n = \varPsi^{n_1}_1 \boxplus \varPsi^{n_2}_2 $ tel que $\varPsi^{n_i}_i$ sont des paramètres globaux de dimension $n_i$ correspondant à des représentations cuspidales de $GL_{n_i}(\mathbb{A}_{F})$ (avec $i = 1,2$).
			\item[(ii)] $\pi_{\infty}$ est une série discrète de poids régulier.
			\item[(iii)] Il existe une place $q$ de $\Q$ qui est décomposée dans le corps quadratique imaginaire $ \overset{\bullet}{F} $ de sorte que $\pi_q$ est non ramifiée et son $L$-paramètre local $\phi_q$ est de la forme $ \phi^n_q = \chi_1 \oplus \chi_2 \oplus \cdots \oplus \chi_n $ où les $\chi_j$ sont deux à deux distincts. 
		\end{enumerate}
		
		Notons $\rho := W(\pi_f) $ la représentation de $Gal(\overline{\Q} \slash \overset{\bullet}{F})$ associée à $\pi_f$ dans la cohomologie de la variété de Shimura correspondant à $G$. Si $\dim \rho = \dim \varPsi^{n_i}_i = n_i$ ($ i = 1$ ou $i = 2$) alors pour tout nombre premier $p$ et toute place $\mathcal{P}$ de $\overset{\bullet}{F}$ au-dessus de $p$, on a
		\[
		\rho_{\mathcal{P}} =  (r_{\mu_i} \circ (\varPsi^{n_i}_i)_p)_{| W_{\mathcal{P}}} \otimes | \cdot |^{-\frac{n-1}{2}}.
		\]
		où $\mu_i = (1, n_i - 1)$ et $W_{\mathcal{P}}$ est le groupe de Weil de $\overset{\bullet}{F}_{\mathcal{P}}$.
	\end{corollaire}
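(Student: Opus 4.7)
Le plan est d'appliquer la formule de Morel (th�or�me \ref{itm :  comp iso}) � $\pi_f$. D'apr�s le lemme pr�c�dent, l'hypoth�se que $G$ est quasi-d�ploy� en toutes les places finies assure que la formule (\ref{itm : sstar}) est v�rifi�e pour tout premier $p$ o� $K_p$ est maximal et $\pi_p$ non ramifi�e. On analysera s�par�ment chaque terme du membre de droite : le terme principal, les sommes sur $\e \in \mathcal{F}_G$, et les sommes sur les cocaract�res $\mu \in M_{\e}$.

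Puisque la signature infinit�simale est $(1, n-1)$, la contrainte $p_1^+ + \cdots + p_r^+ + p_1^- + \cdots + p_s^- = 1$ sur les cocaract�res $\mu \in M_{\e}$ impose qu'un seul de ces entiers soit �gal � $1$. Pour le triplet endoscopique $\star$ associ� au groupe $H_{\star} = G\bigl( U^*(n_1) \times U^*(n_2) \bigr)$, compatible avec la d�composition $\varPsi^n = \varPsi^{n_1}_1 \boxplus \varPsi^{n_2}_2$, il y a alors exactement deux cocaract�res pertinents, qui produisent respectivement les traces de $r_{\mu_1} \circ (\varPsi_1^{n_1})_p$ et $r_{\mu_2} \circ (\varPsi_2^{n_2})_p$, de dimensions $n_1$ et $n_2$. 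La condition (iii) -- l'existence d'une place d�compos�e $q$ o� $\phi_q^n$ est d�crit par $n$ caract�res deux � deux distincts -- permet de s�parer, via les traces en $q$, les diff�rentes donn�es endoscopiques $\e \in \mathcal{F}_G$ et les diff�rentes $\pi_{\e, f} \in R_{\e}(\pi_f)$, gr�ce � un choix convenable de $f^{\infty}$ (dans l'esprit de la remarque \ref{itm : remarque}) qui annule les contributions venant des triplets $\e \neq \star$.

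L'hypoth�se (ii) de r�gularit� de $\pi_{\infty}$ combin�e avec la proposition \ref{itm : comparer} permet d'identifier $W(\pi_f)$ avec $H^d_c(\Sh, \mathcal{L}_{\xi})[\pi_f]$ concentr�e en degr� moiti�, $d = n-1$. L'hypoth�se de dimension $\dim \rho = n_i$ isole alors la contribution du cocaract�re d'indice $i$ parmi les deux cocaract�res pertinents. Le facteur $(N\mathcal{P})^{-m(n-1)/2}$ devant le terme principal correspond pr�cis�ment au twist $|\cdot|^{-(n-1)/2}$ de l'�nonc� ; les identit�s endoscopiques assurent que le terme principal (de dimension $n$) s'int�gre dans la somme endoscopique pour laisser exactement la contribution souhait�e. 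On conclut par le th�or�me de densit� de Chebotarev que l'identit� des traces, valable pour presque tout Frobenius, se rel�ve en une identit� de repr�sentations Frobenius semi-simples du groupe de Weil $W_{\mathcal{P}}$, puis que le r�sultat s'�tend � toute place $\mathcal{P}$ par ind�pendance de $\ell$.

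L'obstacle principal sera le suivi pr�cis des constantes $l(\e)$, $\iota(\e)$, $\iota'(\e)$, $c_G(\pi_f)$ et $c_{\e}(\pi_{\e,f})$, en particulier la v�rification que la combinaison $\bigl(1 - (-1)^{s(\mu)} \iota'(\e) / \iota(\e)\bigr)$ donne la contribution attendue sans facteur parasite, et que le terme principal et le terme endoscopique s'ajustent correctement pour produire le coefficient $1$ devant la contribution de $r_{\mu_i} \circ (\varPsi_i^{n_i})_p$. Un second point technique sera le choix de la fonction $f^{\infty}$ assurant � la fois la s�paration des donn�es endoscopiques et la non annulation des constantes $c_{\e}(\pi_{\star,f})$ pour $\star$.
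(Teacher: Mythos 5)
Votre sch\'ema g\'en\'eral --- appliquer le th\'eor\`eme \ref{itm :  comp iso} \`a $\pi_f$, constater que la seule donn\'ee endoscopique pertinente est celle associ\'ee \`a $G\big(U^*(n_1)\times U^*(n_2)\big)$ avec exactement deux cocaract\`eres dans $M_{\e}$, utiliser la concentration en degr\'e moiti\'e via (ii) et la proposition \ref{itm : comparer}, puis conclure par Chebotarev --- est bien celui de la d\'emonstration. Mais il reste un trou r\'eel au point d\'ecisif, que vous signalez d'ailleurs vous-m\^eme comme << obstacle principal >> sans le r\'esoudre : la d\'etermination des coefficients devant $\Tr(r_{\mu_1}\circ(\varPsi^{n_1}_1)_p)$ et $\Tr(r_{\mu_2}\circ(\varPsi^{n_2}_2)_p)$. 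La d\'emonstration du papier contourne enti\`erement le calcul des constantes $l(\e)$, $\iota(\e)$, $\iota'(\e)$, $c_G(\pi_f)$, $c_{\e}(\pi_{\e,f})$ : elle \'ecrit la formule (\ref{itm : sstar}) sous la forme $\Tr(\Fr^m_{\mathcal{P}}, W(\pi_f)) = \alpha_p\,\Tr(\cdots) + \beta_p\,\Tr(\cdots)$, montre que $\alpha_p$ et $\beta_p$ ne d\'ependent pas de $p$ (les expressions qui les d\'efinissent sont les m\^emes pour tout $p$ non ramifi\'e, une fois observ\'e que les termes o\`u $\pi_{e_1,p}\otimes\chi_{e_1,p}$ est ramifi\'ee s'annulent d'eux-m\^emes), puis les d\'etermine en une seule place bien choisie.

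C'est l\`a que l'hypoth\`ese (iii) intervient, et pas l\`a o\`u vous la placez. Elle ne sert pas \`a s\'eparer les donn\'ees endoscopiques par un choix de $f^{\infty}$ (la restriction \`a un seul $\e$ d\'ecoule d\'ej\`a de la d\'efinition de $R_{\e}(\pi_f)$ et de la forme $\varPsi^n=\varPsi^{n_1}_1\boxplus\varPsi^{n_2}_2$ ; de plus $f^{\infty}$ est fourni par le th\'eor\`eme, on ne peut pas le rechoisir librement pour tuer des termes). Elle sert \`a garantir que $r_{\mu_1}\circ(\varPsi^{n_1}_1)_q \oplus r_{\mu_2}\circ(\varPsi^{n_2}_2)_q$ est sans multiplicit\'e \`a la place d\'ecompos\'ee $q$ ; comme $W(\pi_f)$ est une vraie repr\'esentation (et non virtuelle) gr\^ace \`a (ii), cela force $\alpha_q$ et $\beta_q$ \`a \^etre de m\^eme signe, donc positifs apr\`es normalisation. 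L'hypoth\`ese $\dim\rho = n_i < n$ exclut alors qu'ils soient tous deux non nuls et impose que le coefficient survivant vaille $1$. Sans cet argument de positivit\'e, l'hypoth\`ese de dimension seule ne peut pas << isoler >> un terme : dans le groupe de Grothendieck, l'\'equation $\alpha n_1 + \beta n_2 = n_i$ admet des solutions avec $\alpha$ et $\beta$ tous deux non nuls si leurs signes ne sont pas contraints. Votre proposition laisse donc ouverte pr\'ecis\'ement l'\'etape qui fait fonctionner la preuve.
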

	\begin{proof}
		On applique le théorème \ref{itm :  comp iso} pour $\pi_f$. Choisissons tout d'abord un sous-groupe compact suffisamment petit $K$ de $G(\A_f)$ de sorte que $\pi_f^K \neq 0$, en particulier on peut choisir $K$ tel que $K_q = G(\Z_q)$ car $\pi_q$ est non ramifiée.
		
		Comme le $L$-paramètre global $\varPsi$ de $\pi$ satisfait $ \varPsi^n = \varPsi^{n_1}_1 \boxplus \varPsi^{n_2}_2 $ l'ensemble des $\e \in \mathcal{F}_G$ tels que $R_{\e}(\pi_f)$ soit non nul contient un seul élément $\e = (e_1)$ où $e_1$ est le triplet endoscopique correspondant à $G(U^*(n_1) \times U^*(n_2))$.
		
		Par construction de $R_{\e}(\pi_f)$, pour un nombre premier $p$ non ramifié on a $ (\eta_{ \e, p})_{simple} \circ \varphi_{\pi_{\e ,p} \otimes \chi_{\e, p}} = \varphi_{(\pi_f)_p}$ (consulter les paragraphes avant le théorème \ref{itm :  comp iso} pour la définition de $ \eta_{ \e, p})_{simple} $) et de plus le changement de base quadratique de $\varphi_{(\pi_f)_p}$ s'écrit $ \varphi_{(\pi_f)_p}^n = (\varPsi^{n_1}_1)_p \oplus (\varPsi^{n_2}_2)_p $. D'autre part, la signature à l'infini $\mu_G$ est $(1, n-1)$ on voit alors que l'ensemble $M_{\e}$ contient exactement deux éléments $ \mu_1 = (1, n_1 - 1) \times (0, n_2)$ et $ \mu_2 = (0, n_1) \times (1, n_2 - 1)$. On en déduit que  	
		\[
		r_{\mu} \circ \varphi_{\pi_{\e ,p} \otimes \chi_{\e, p}} = \left\lbrace \begin{array}{cc}
			r_{\mu_1} \circ (\varPsi^{n_1}_1)_p & \quad  \mathop{\mathrm{si}}\nolimits \quad \mu = (1, n_1 - 1) \times (0, n_2) \\
			r_{\mu_2} \circ (\varPsi^{n_2}_2)_p & \quad \mathop{\mathrm{si}}\nolimits \quad \mu = (0, n_1) \times (1, n_2 - 1)
		\end{array} \right.
		\]
		où $\mu_1 = (1, n_1 - 1)$ et $\mu_2 = (1, n_2 - 1)$.
		
		D'après la description des représentations $ r_{\mu_G}, r_{\mu_1}, r_{\mu_2} $ dans la section \ref{itm : section notations}, on a l'identité suivante :
		\[
		r_{\mu_G} \circ \varphi_{\pi_p} = r_{\mu_1} \circ (\varPsi^{n_1}_1)_p \oplus r_{\mu_2} \circ (\varPsi^{n_2}_2)_p.
		\]
		
		D'autre part, on a
		\[
		(N \mathcal{P})^{- \frac{m(n-1)}{2}} \Tr (r_{\mu_G} \circ \varphi_{\pi_p}(\Fr^m_{\mathcal{P}})) = \Tr ( |\cdot|^{-\frac{n-1}{2}} \otimes r_{\mu_G} \circ \varphi_{\pi_p} (\Fr^m_{\mathcal{P}})) ).
		\]
		
		Finalement le théorème \ref{itm :  comp iso} induit l'identité suivante pour presque tout $p$ où $\pi_p$ est non ramifiée.
		\begin{equation} \phantomsection \label{itm : trace}
			\Tr(\Fr^m_{\mathcal{P}}, W(\pi_f)) = \alpha_p \cdot \Tr ( |\cdot|^{-\frac{n-1}{2}} \otimes r_{\mu_1} \circ (\varPsi^{n_1}_1)_p (\Fr^m_{\mathcal{P}}) ) + \beta_p \cdot \Tr ( |\cdot|^{-\frac{n-1}{2}} \otimes r_{\mu_2} \circ (\varPsi^{n_2}_2)_p (\Fr^m_{\mathcal{P}})).
		\end{equation}
		
		Montrons ensuite que pour presque tous $p_1$ et $p_2$ deux nombres premiers tels que $\pi_{p_1}$ et $\pi_{p_2}$ sont non ramifiées, on a $\alpha_{p_1} = \alpha_{p_2}$ et $\beta_{p_1} = \beta_{p_2}$.
		
		En effet, pour notre représentation $\pi_f$, on a
		\[
		\alpha_{p_i} = c_G(\pi_f ) \dim(\pi_f^K) \ + \ (-1)^{l(e_1)} \cdot \iota_{(e_1)} \cdot (1 - (-1)^{s(\overline{\mu}_1)} \cdot \dfrac{\iota'(e_1)}{\iota(e_1)}) \sum_{\pi_{e_1, f} \in R_{e_1}(\pi_f)} c_{e_1} (\pi_{e_1, f}) \Tr(\pi_{e_1, f})((f^{\infty})^{e_1}),
		\]
		où la somme est prise pour $\pi_{e_1 ,f}$ telle que $\pi_{e_1 ,p_i} \otimes \chi_{e_1, p_i}$ est non ramifiée.	
		
		Or lorsque $\pi_{e_1 ,p_i} \otimes \chi_{e_1, p_i}$ est ramifiée, $\Tr(\pi_{e_1, f})((f^{\infty})^{e_1})$ s'annule, on peut récrire la formule calculant $\alpha_{p_i}$ sous forme 
		\[
		\alpha_{p_i} = c_G(\pi_f ) \dim(\pi_f^K) \ + \ (-1)^{l(e_1)} \cdot \iota_{(e_1)} \cdot (1 - (-1)^{s(\overline{\mu}_1)} \cdot \dfrac{\iota'(e_1)}{\iota(e_1)}) \sum_{\pi_{e_1, f} \in R_{e_1}(\pi_f)} c_{e_1} (\pi_{e_1, f}) \Tr(\pi_{e_1, f})((f^{\infty})^{e_1})
		\]
		où la somme est prise pour \textit{toute} $\pi_{e_1 ,f}$. En particulier la formule calculant $\alpha_{p_i}$ ne dépend pas de $p_i$. On en déduit que $\alpha_{p_1} = \alpha_{p_2}$.
		
		Par le même argument, on a $\beta_{p_1} = \beta_{p_2}$ en utilisant
		\[
		\beta_{p_i} = c_G(\pi_f ) \dim(\pi_f^K) \ + \ (-1)^{l(e_1)} \cdot \iota_{(e_1)} \cdot (1 - (-1)^{s(\overline{\mu}_2)} \cdot \dfrac{\iota'(e_1)}{\iota(e_1)}) \sum_{\pi_{e_1, f} \in R_{e_1}(\pi_f)} c_{e_1} (\pi_{e_1, f}) \Tr(\pi_{e_1, f})((f^{\infty})^{e_1}).
		\]
		
		Considérons maintenant le premier $q$ décomposé. Puisque $\pi_{\infty}$ est une série discrète de plus haut poids régulier, la cohomologie de la variété de Shimura se concentre en degré moitié, en particulier $W(\pi_f)$ est une vraie représentation (à signe près). %il n'y a pas d'annulation dans le membre de gauche de (\ref{itm : trace}). On en déduit alors qu'il n'y a pas d'annulation dans le membre de droite de (\ref{itm : trace}).
		D'après le lemme précédent appliqué en $q$, la condition $(iii)$ implique que $ r_{\mu_1} \circ (\varPsi^{n_1}_1)_q \oplus r_{\mu_2} \circ (\varPsi^{n_2}_2)_q $ est sans multiplicité, ce qui implique que $\alpha_q$ et $\beta_q$ sont de même signe. On peut supposer qu'ils sont positifs.
		
		Supposons que $\alpha_q$ et $\beta_q$ sont \textit{strictement} positifs. Cela implique donc que $\dim \rho \geq \dim \varPsi^{n_1}_1 + \dim \varPsi^{n_2}_2 = n$ ce qui contredit l'hypothèse $\dim \rho = \dim \varPsi^{n_i}_i$. Il y a alors exactement un coefficient non nul. 
		
		Maintenant, le fait que $n$ est impair couplé avec l'hypothèse $\dim \rho = \dim \varPsi^{n_i}_i$ implique alors que $\beta = 0$ si $i = 1$ et $\alpha = 0$ si $i = 2$. Autrement dit, on a 
		\[
		\Tr(\Fr^m_{\mathcal{P}}, W(\pi_f)) = \Tr ( |\cdot|^{-\frac{n-1}{2}} \otimes r_{\mu_i} \circ (\varPsi^{n_i}_i)_p (\Fr^m_{\mathcal{P}}) ).
		\]
		(ici $i = 1$ si $\dim \rho = \dim \varPsi^{n_1}_1$ et $i = 2$ si $\dim \rho = \dim \varPsi^{n_2}_2$).
		
		%Comme $\alpha_{p_1} = \alpha_{p_2}$ et $\beta_{p_1} = \beta_{p_2}$ si $\pi_{p_1}$ et $\pi_{p_2}$ sont non ramifiées, on voit que
		%\[
		%\rho_p = r_{\mu_i} \circ (\varPsi_i)_p
		%\]
		Comme l'égalité ci-dessus est vraie pour presque toute place $\mathcal{P}$ de $\overset{\bullet}{F}$, le théorème de densité de Chebotarev implique que 
		\[
		\rho_{\mathcal{P}} =  (r_{\mu_i} \circ (\varPsi^{n_i}_i)_p)_{| W_{\mathcal{P}}} \otimes | \cdot |^{-\frac{n-1}{2}}
		\]
		pour toute place $\mathcal{P}$.
	\end{proof}																									 
	%\begin{remarque}
	%	L'extension $ F_p / \Q_p $ est non ramifiée de degré pair. L'hypothèse le degré de l'extension $ F_p / \Q_p $ n'est pas un multiple de $4$ assure l'existence du corps global $\overset{\bullet}{F}$ avec les propriétés décrites ci-dessus. Désormais on ne considère que des données locales $\mathcal{D}_{\Q_p}$ de sorte que $n$ et le degré de $F_p / \Q_p$ ne sont pas des multiples de $4$.
	%\end{remarque}
	
	\begin{remarque} \phantomsection \label{itm : remar}
		Le corollaire est encore valable lorsque $ \varPsi $ est un paramètre simple générique (ce qui équivaut à dire que $\dim \varPsi^{n_1}_1 = 0$ ou $ \dim \varPsi^{n_2}_2 = 0 $).
	\end{remarque}

	\subsection{Preuve du théorème principal}
	Le but de ce paragraphe est de prouver le théorème principal de l'introduction. D'après le théorème \ref{itm : suite spectrale}, on a une suite spectrale reliant la partie supercuspidale des espaces de Rapoport-Zink à celle de la strate basique $\Sh_{\infty}(basic)$ des variétés de Shimura $\Sh_{\infty}$ du \ref{itm : Sh} laquelle est déterminée par le corollaire \ref{itm: galois}. Afin d'identifier les $\sigma_{\pi_p, \pi_p'}$ avec les $r_{\mu} \circ \varphi_{i, F_p}$, nous avons besoin d'une part de l'hypothèse $\dim \rho = \dim \varPsi_i$ dans \ref{itm: galois} et d'autre part de calculer $\dim \sigma_{\pi_p, \pi_p'}$. Dans les deux cas, ce calcul repose sur un problème de comptage de caractères du groupe centralisateur des L-paramètres à l'infini. Plus précisément, la preuve est découpée en $4$ étapes.
	\begin{enumerate}
		\item[$\bullet$] Dans la première étape, on montre les points $(i)$ et $(ii)$ du théorème et on obtient des contraintes sur la dimension de $\sigma_{\pi_p, \pi_p'}$.
		\item[$\bullet$] Dans la deuxième étape, on établit une relation entre $\sigma_{\pi_p, \pi_p'}$ et une représentation galoisienne bien choisie de la variété de Shimura puis on ramène le problème du calcul de la dimension de la représentation au problème de comptage des caractères du groupe centralisateur à l'infini.
		\item[$\bullet$] Ensuite, on utilise des contraintes sur la dimension de $\sigma_{\pi_p, \pi_p'}$ obtenues avant pour calculer les caractères du groupe centralisateur à l'infini.
		\item[$\bullet$] Enfin on utilise le corollaire \ref{itm: galois} afin de calculer $\sigma_{\pi_p, \pi_p'}$.
	\end{enumerate}

	Rappelons tout d'abord la construction des données globales à partir des données locales.
	\begin{proposition}(\cite{Far04}) \phantomsection \label{itm: globale -> locale}
		Soit une donnée locale $\mathcal{D}_{\Q_p} = (F_p, *, V, \langle \cdot | \cdot \rangle, \mu, b)$ de type PEL non ramifiée simple sur une extension finie de $\Q_p$. Supposons que $[F_p : \Q_p] = 2d$ n'est pas un multiple de $4$, alors:
		\begin{enumerate}
			\item[$\bullet$] il existe un corps CM de la forme $\overset{\bullet}{F} = K \mathcal{K} $ avec $\mathcal{K}$ un corps quadratique imaginaire de sorte que $p$ reste inerte dans $\overset{\bullet}{F}$ et $\overset{\bullet}{F}_p = F_p$.
			\item[$\bullet$] il existe une donnée globale $ \mathcal{D} = ( \overset{\bullet}{F}, B, *, V, \langle \cdot | \cdot \rangle, h, \overset{\bullet}{G}) $ de type PEL, un plongement $ \nu : \overline{\Q} \hookrightarrow \overline{\Q}_p $ tels que via $\nu$, $\mathcal{D}$ induise la donnée locale $\mathcal{D}_{\Q_p}$.
		\end{enumerate}  
		De plus  
		\begin{enumerate}
			\item[-] Pour tout $n$, on peut imposer que  $\End_B(V)$ est une algèbre à division qui est en toute place finie soit déployée, soit une algèbre à division. 
			\item[-] Pour $n$ impair ou $n \equiv 2$ modulo $4$, on peut imposer que $\End_B(V)$ est une algèbre simple qui est déployée en toutes places finies.
		\end{enumerate}
	\end{proposition}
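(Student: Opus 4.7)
Le plan proc�de par globalisations successives, en exploitant les th�or�mes standards d'approximation pour les corps de nombres, la suite exacte du groupe de Brauer, et le principe de Hasse pour les formes hermitiennes. D'abord, je construirai le corps $\overset{\bullet}{F} = K \mathcal{K}$ en cherchant s�par�ment un corps totalement r�el $K$ de degr� $d$ dans lequel $p$ reste inerte (par exemple en utilisant un sous-corps appropri� d'une extension cyclotomique, ou via le th�or�me de densit� de Chebotarev) et un corps quadratique imaginaire $\mathcal{K}$ dans lequel $p$ est inerte. Alors $\overset{\bullet}{F}$ est CM de degr� $2d$ sur $\Q$, et comme $K_p$ et $\mathcal{K}_p$ sont les extensions non ramifi�es de $\Q_p$ de degr�s $d$ et $2$, leur compos� $\overset{\bullet}{F}_p$ est l'unique extension non ramifi�e de $\Q_p$ de degr� $2d$, soit $F_p$.

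Ensuite, pour globaliser l'alg�bre $B$, j'utiliserai la suite exacte
\[
0 \longrightarrow Br(\overset{\bullet}{F}) \longrightarrow \bigoplus_v Br(\overset{\bullet}{F}_v) \xrightarrow{\sum inv_v} \Q / \Z \longrightarrow 0
\]
pour construire une alg�bre centrale simple sur $\overset{\bullet}{F}$ dont les invariants locaux en $p$ correspondent � ceux prescrits, et dont les invariants aux autres places finies sont choisis librement afin d'obtenir soit une alg�bre � division (cas g�n�ral) soit une alg�bre splitt�e (cas $n$ impair ou $n \equiv 2 \pmod 4$). L'involution $*$ de seconde esp�ce se globalise sous la contrainte que les invariants en des places conjugu�es par $c$ s'ajoutent � z�ro, contrainte qui est compatible avec la condition globale $\sum inv_v = 0$.

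Pour la construction du $B$-module hermitien $(V, \langle \cdot | \cdot \rangle)$, j'appliquerai le principe de Hasse pour les formes hermitiennes sur alg�bres centrales simples avec involution de seconde esp�ce. On choisit une place archim�dienne distingu�e de $K$ et on y impose la signature $(1, n-1)$, les autres places archim�diennes recevant la signature $(0, n)$. Le morphisme $h : \Res_{\C/\R}(\G_{m,\C}) \longrightarrow \overset{\bullet}{G}_{\R}$ correspondant est d�termin� par cette signature. Il reste � v�rifier que les invariants locaux en $p$ de la forme hermitienne construite correspondent effectivement � $\langle \cdot | \cdot \rangle$ via le plongement $\nu$, ce qui revient � ajuster les invariants aux places finies autres que $p$ pour garantir la compatibilit� globale.

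L'obstacle principal sera la coh�rence num�rique : il faut simultan�ment contr�ler les invariants de Brauer de $B$, les signatures de la forme hermitienne et les invariants locaux aux places finies, en respectant les contraintes de parit� du principe de Hasse. C'est pr�cis�ment cette coh�rence qui impose l'hypoth�se $4 \nmid 2d$ (�quivalente � $d$ impair) dans le cas g�n�ral, puisque cette parit� intervient dans la formule reliant les invariants de Hasse locaux aux signatures � l'infini pour les groupes unitaires. Dans le sous-cas $n$ impair ou $n \equiv 2 \pmod 4$, on dispose d'une souplesse suppl�mentaire permettant de choisir l'alg�bre $\End_B(V)$ splitt�e aux places finies, au prix d'une contrainte suppl�mentaire sur la parit� de $n$ modulo $4$ qui coop�re avec $2d \not\equiv 0 \pmod 4$.
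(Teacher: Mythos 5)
Votre proposition est correcte et suit essentiellement la m\^eme d\'emarche que l'article, qui se contente de renvoyer aux propositions 10.1.1 et 10.1.3 de \cite{Far04}, dont la preuve est pr\'ecis\'ement ce type d'argument de globalisation (construction du corps CM, alg\`ebre \`a involution via la suite exacte du groupe de Brauer, forme hermitienne via le principe de Hasse). Signalons seulement que l'hypoth\`ese $4 \nmid 2d$, c'est-\`a-dire $d$ impair, sert d'abord \`a garantir que $p$ reste inerte dans le compos\'e $K\mathcal{K}$ --- les extensions non ramifi\'ees de $\Q_p$ de degr\'es $d$ et $2$ ne sont lin\'eairement disjointes que pour $d$ impair --- et non pas principalement dans la coh\'erence des invariants de Hasse aux places finies comme vous le sugg\'erez \`a la fin.
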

	\begin{proof}
		On raisonne comme dans la proposition 10.1.3 de \cite{Far04}. Grâce à la proposition 10.1.1 dans loc. cit
		%\ref{itm: local-global unitaire} (ou la proposition 10.1.1 dans loc. cit)
		on peut supposer que $\End_B(V)$ satisfait les conditions annoncées ci-dessus. 
	\end{proof}
	
	\begin{lemme} \phantomsection \label{itm: dégénère} 
		Soit $\Pi_p$ une représentation supercuspidale de $\J_b(\Q_p)$. Alors pour tout $t > 0$ on a
		$$ \Ext^t_{\J_{b}(\Q_p)} \left( H^q_c(\mathcal{M}_{K_p}, \overline{\Q}_{\ell}(n-1)), \Pi_p \right) = 0 $$
	\end{lemme}
	%Ce résultat est essentiellement démontré dans \cite{Far04}. Pour le confort du lecteur, on esquisse un argument.
	\begin{proof}
		
		On pose $\Delta = \hom_{\Z}(X^*(G)_{\Q_p}, \Z)$. Comme $b$ est basique, $\J_b(\Q_p)$ est une forme intérieure de $G(\Q_p)$, tout $\chi \in X^*(G)_{\Q_p} $ se transfère à $\J_b(\Q_p)$ en un $\tilde{\chi} \in X^*(\J_b (\Q_p) )_{\Q_p}$. 
		
		Notons $ \J_b^1 (\Q_p) = \bigcap_{\chi \in X^*(G)_{\Q_p}} ker \vert \tilde{\chi} \vert $ où $ \vert \tilde{\chi} \vert : \J_b (\Q_p) \ \longrightarrow $, $ x \longmapsto \ v_p(\tilde{\chi}(x)) $
		%	\begin{equation*}
		%	\begin{split}
		%	\vert \tilde{\chi} \vert \ &\longrightarrow \ \Z \\
		%	x \ & \longmapsto \ v_p(\tilde{\chi}(x))
		%	\end{split}
		%	\end{equation*}
		; en particulier, $\J_b^1 (\Q_p)$ a un centre compact.
		
		Il y a une application $ \pi_2 : \mathcal{M} \longrightarrow \Delta $ qui est essentiellement la hauteur de la rigidification $\rho$ (\cite{RZ96}, 3.52). On note encore $\Delta' \subset \Delta $ l'image de $\pi_2$. On a donc une décomposition: $ \mathcal{M}_{K_p} = \coprod_{i \in \Delta'} \mathcal{M}_{K_p}^{(i)} $ où $ \mathcal{M}_{K_p}^{(i)} = \pi_2^{-1}(i)$. On en déduit
		$$ H^q_c(\mathcal{M}_{K_p}, \overline{\Q}_{\ell}(n-1)) = \sum_{\overline{i} \in \Delta' / \J_b (\Q_p) } c-\Ind_{\J^1_{b} (\Q_p) }^{\J_b(\Q_p)}(H_c^q (\mathcal{M}_{K_p}^{(i)}, \overline{\Q}_{\ell}(n-1))). $$
		
		En utilisant la dualité de Frobenius, on dispose alors d'un isomorphisme de foncteurs:
		$$ \hom_{\J_{b}(\Q_p)} \left( H^q_c(\mathcal{M}_{K_p}, \overline{\Q}_{\ell}(n-1)), \bullet \right) \simeq \sum_{\overline{i} \in \Delta' / \J_b (\Q_p)}  \hom_{\J^{1}_{b}} \left( H^q_c(\mathcal{M}^{(i)}_{K_p}, \overline{\Q}_{\ell}(n-1)), \Res_{\J_{b}^1(\Q_p)}^{\J_b (\Q_p) } \bullet \right). $$
		
		Il en résulte des isomorphismes:
		$$ \Ext^t_{\J_b(\Q_p)} \left( H^q_c(\mathcal{M}_{K_p}, \overline{\Q}_{\ell}(n-1)), \Pi_p \right) \simeq \sum_{\overline{i} \in \Delta' / \J_b (\Q_p)} \Ext^t_{\J^{1}_b (\Q_p)} \left( H^q_c(\mathcal{M}^{(i)}_{K_p}, \overline{\Q}_{\ell}(n-1)), \Res_{\J_b^1(\Q_p)}^{\J_b (\Q_p)} \Pi_p \right). $$
		
		D'autre part la représentation $\Pi_p$ est cuspidale, donc $\Res_{\J_b^1 (\Q_p) }^{\J_b (\Q_p) } \Pi_p$ est une représentation finie (les coefficients matriciels sont des fonctions à support compact), donc $\Res_{\J_b^1 (\Q_p) }^{\J_b (\Q_p) } \Pi_p$ est un objet projectif dans la catégorie des représentations lisse de $\J^1_b (\Q_p) $. On obtient alors que pour tout $t > 0$
		$$ \Ext^t_{\J_b (\Q_p) } \left( H^q_c(\mathcal{M}_{K_p}, \overline{\Q}_{\ell}(n-1)), \Pi_p \right) = 0. $$
	\end{proof}
	
	Soit $ c: G(\Q_p) \longrightarrow \Q_p^{\times} $ le caractère du facteur de similitudes. Pour $ b $ non basique, le groupe $ \J_b (\Q_p) $ est une forme intérieure d'un sous-groupe de Levi $ \mathrm{M}_b (\Q_p) $ de $ G(\Q_p) $. Ensuite, le caractère du facteur de similitudes $ c $ restreint à $ \mathrm{M}_b(\Q_p) $ peut être transféré à $ \J_b(\Q_p) $. Donc par abus de langage, on note aussi $ c $ le caractère correspondant de $ \J_b(\Q_p) $. Le lemme suivant nous permet de comprendre le comportement de la cohomologie des espaces de Rapoport-Zink lorsque la représentation $ \pi $ est tordue par un caractère non ramifié :
	
	\begin{lemme} \phantomsection \label {itm: torsion non ramifiée}
		Soit $ (F_p, *, V, \langle \cdot | \cdot \rangle, \mu, b) $ une donnée de Rapoport-Zink PEL unitaire non ramifiée, supposons que $ \omega: \Q_p^{\times} \longrightarrow \overline{\Q}^{\times}_{\ell} $ est un caractère non ramifié et $ \pi $ est une représentation irréductible de $ \J_b(\Q_p) $. On a alors un isomorphisme dans $ Groth (G(\Q_p) \times W_{E_p}) $ entre
		\[
		\displaystyle \sum_{t,q} (-1)^{t+q} \mathop{\mathrm{lim}}_{\overrightarrow{K}_p} \Ext^t_{\J_{b}(\Q_p)} \left( H^q_c(\mathcal{M}_{K_p}, \overline{\Q}_{\ell}(n-1)), \pi \otimes (\omega \circ c) \right)
		\]
		et 
		\[
		\displaystyle \sum_{t,q} (-1)^{t+q} \mathop{\mathrm{lim}}_{\overrightarrow{K}_p} \Ext^t_{\J_{b}(\Q_p)} \left( H^q_c(\mathcal{M}_{K_p}, \overline{\Q}_{\ell}(n-1)),\pi \right) \otimes (\omega \circ c) \otimes (\omega \circ Art^{- 1}_{E_p})
		\]
		où $E_p$ est le corps de définition de $ \mu $.
	\end{lemme}
	
	\begin{proof}
		Ce lemme est un analogue de \cite[Lemme 4.9]{Shin} et la même preuve s'applique dans notre situation. Nous donnons donc brièvement une idée de la preuve.
		
		Définissons un caractère $ \chi $ de $ \J_b (\Q_p) \times G (\Q_p) \times W_{E_p} $ par
		\[
		\chi: = (\omega \circ c) \otimes (\omega \circ c) \otimes (\omega \circ Art^{- 1}_{E_p}).
		\]
		
		Ensuite, nous prouvons qu'il existe un isomorphisme de $ \overline{\Q}_{\ell}$-espaces vectoriels
		\[
		H_c^j (\mathcal{M}_{K_p}, \overline{\Q}_{\ell}) \simeq H_c^j (\mathcal{M}_{K_p}, \overline{\Q}_{\ell}) \otimes \chi
		\]
		compatible avec l'action de $ \J_b(\Q_p) \times (K_p \backslash G (\Q_p) / K_p) \times W_{E_p} $.
		
		Notons qu'il y a une application $ \pi_2 : \mathcal {M}_ {K_p} \longrightarrow \Delta $ et de plus il existe une manière naturelle de définir une action de $ \J_b(\Q_p) \times G(\Q_p) \times W_{E_p} $ sur $ \Delta $ telle que l'application $ \pi_2 $ soit équivariante par rapport à $ \J_b(\Q_p) \times (K_p \backslash G(\Q_p) / K_p) \times W_{E_p} $ (\cite[remarque 2.6.11]{Far04}).
		
		On peut prouver le lemme en utilisant le fait que $ \chi $ agit trivialement sur $ (\J_b(\Q_p) \times (K_p \backslash G(\Q_p) / K_p) \times W_{E_p})^1 $ et
		\[
		\mathop{\mathrm{lim}}_{\overrightarrow {K_p}} H_c^j (\mathcal{M}_{K_p}, \overline{\Q}_{\ell}) \simeq c-\Ind^{\J_b (\Q_p) \times (K_p \backslash G(\Q_p) / K_p) \times W_{E_p}}_{(\J_b(\Q_p) \times (K_p \backslash G(\Q_p) / K_p ) \times W_{E_p})^1} \Big(\mathop{\mathrm{lim}}_{\overrightarrow{K_p}} H_c^j(\mathcal{M}^{(0)}_{K_p}, \overline{\Q}_{\ell}) \Big)
		\]
		où $ \mathcal{M}^{(0)}_{K_p} $ est l'image inverse de $ 0 $ par $ \pi_2 $ et $ (\J_b (\Q_p) \times (K_p \backslash G (\Q_p ) / K_p) \times W_{E_p})^1 $ est le sous-groupe de $ \J_b (\Q_p) \times (K_p \backslash G (\Q_p) / K_p) \times W_{E_p} $ qui agit trivialement sur $ \Delta $.
		
	\end{proof}
	
	\begin{demn}
		Soit $\mathcal{D} = ( \overset{\bullet}{F}, B, *, V, \langle \cdot | \cdot \rangle, h, \overset{\bullet}{G})$ une donnée globale de type PEL globalisant la donnée locale de sorte que $\End_B(V)$ est une algèbre simple qui est déployée en toutes places finies comme dans la proposition \ref{itm: globale -> locale}. Soit $\Sh$ la variété de Shimura associée: le groupe $\overset{\bullet}{G}(\Q_p) = G(\Q_p)$ est le groupe des similitudes unitaires quasi-déployé en $n$ variables. En particulier, $\Sh$ est de signature $(1, n-1)$ à l'infini. % et $\Sh^*$ est donc propre.
		
		On note $\mathcal{M} (\mathcal{D}_{\Q_p}, b)$ l'espace de Rapoport-Zink associé à la donnée locale.
		
		Soit $\phi$ une classe d'isogénie intervenant dans la strate basique et $ I := (I^{\phi})$ le groupe réductif associé. On sait que $I(\R)$ est la forme compacte modulo le centre de $\overset{\bullet}{G}(\R)$, que $I(\Q_p) = \J_b(\Q_p) $ et que $ I(\A_f^p) = \overset{\bullet}{G}(\A_f^p) $. D'après la proposition \ref{itm : suite spectrale} il y a une suite spectrale $\overset{\bullet}{G}(\A_f) \times W_{F_p}$ équivariante:
		\begin{equation} \phantomsection \label{itm : 1ss}
			E_2^{tq} = | \ker^1 (\Q, \overset{\bullet}{G}) | \sum_{\substack{\Pi \in \mathcal{A}(I) \\ \Pi_{\infty} = \breve{\xi}}} \left( \Ext^t_{\J_{b}(\Q_p)} \left( H^q_c(\mathcal{M}), \Pi_p \right)_{cusp}  \right) \otimes (\Pi^{\infty, p}) \Longrightarrow \left( H^{t+q}_c(\Sh, \mathcal{L}_{\xi}) \right)_{p-cusp}
		\end{equation}	
		où on a noté $ \Ext^t_{\J_{b}(\Q_p)}\left( H^q_c(\mathcal{M}), \Pi_p \right) := \displaystyle \mathop{\mathrm{lim}}_{\overrightarrow{K}} \Ext^t_{\J_{b}(\Q_p)} \left( H^q_c(\mathcal{M}_{K_p}, \overline{\Q}_{\ell}(n-1)), \Pi_p \right) $ pour alléger les notations et où $\text{p-cusp}$ signifie que la composante en $p$ est supercuspidale.
		
		On choisit une représentation $\xi$ de dimension finie de $I(\C)$ qui est de poids régulier et suffisamment régulier au sens de \ref{itm: suffisament régulier}. Soit $\Pi_{(\xi)}(\overset{\bullet}{G}(\R))$ le $L$-paquet de séries discrètes de $\overset{\bullet}{G}(\R)$ cohomologiques pour $\xi$.
		
		Considérons $ \varphi : W_{\Q_p} \times SL_2(\C) \longrightarrow \prescript{L}{}G $ un paramètre discret qui est trivial sur $SL_2(\C)$. D'après la proposition \ref{itm : C.Moeglin}, le paquet $\Pi_{\varphi}(G(\Q_p))$ ne contient que des représentations supercuspidales. On peut exprimer la restriction de $\varphi$ sur $W_{F_p}$ sous la forme
		\[
		\varphi_{F_p} = \varphi^{n_1}_1 \oplus \varphi^{n_2}_2 \oplus \cdots \oplus \varphi^{n_r}_r,
		\]
		où les $\varphi^{n_i}_i$ sont des paramètres simples de $GL_{n_i}(F_p)$. 
		
		Puisqu'on utilisera dans la suite le théorème de classification des représentations automorphes pour $I$ et $\overset{\bullet}{G}$ (c.f \ref{itm: global}), il faut étendre $I$ et $\overset{\bullet}{G}$ en des formes intérieures pures. Pour ce faire, utilisons la proposition \ref{itm : forme intérieure pure}.
		
		Comme $\overset{\bullet}{G}(\R)$ est de signature $(1, n-1)$, son invariant est alors $a^G_{\infty} = \left[ \dfrac{n}{2} \right] + (n-1) \ (\text{mod} \ 2)$. De même $I(\R)$ est de signature $(0,n)$, son invariant est $a^I_{\infty} = \left[ \dfrac{n}{2} \right] + n \ (\text{mod} \ 2)$.
		
		De plus pour $v \neq p$, $\overset{\bullet}{G}(\Q_v)$ est quasi-déployé. Or $n$ étant impair, il y a donc deux manières d'étendre $\overset{\bullet}{G}(\Q_v)$ en une forme intérieure pure et on choisit alors la manière dont l'invariant vérifie $ a_{v} = 0 \ \text{mod} \ 2 $. De même, il y a deux manières d'étendre $\overset{\bullet}{G}(\Q_p) = G(\Q_p)$ en une forme intérieure pure et on choisit alors l'unique manière de sorte que $a^G_p + \left[ \dfrac{n}{2} \right] + (n-1) = 0 \ (\text{mod} \ 2)$.   
		
		Pour $I$ on procède de même et en particulier on étend $\J_b(\Q_p)$ en une forme intérieure de sorte que $ a^I_p + \left[ \dfrac{n}{2} \right] + n = 0 \ (\text{mod} \ 2)$.
		\begin{remarque} \phantomsection \label{itm : infini}
			On a $ a^I_p + a^G_p = a^I_{\infty} + a^G_{\infty} = 1 \ \text{mod} \ 2 $. %On peut et on va supposer que $a^G_{\infty} = 0$ et $a^I_{\infty} = 1$. Cela implique que $ a^I_p = 1 $ et $a^G_p = 0$.	
		\end{remarque}
		
		\textbf{Étape 1 : Obtenir des contraintes sur $\dim \sigma_{\pi_p, \pi_p'} $.}
		
		Considérons une représentation supercuspidale $\widetilde{\pi}_p $ dans $\Pi_{\varphi}(\J_b(\Q_p))$. D'après \cite[théo. 5.7]{Shin1}, il existe une représentation automorphe $ \widetilde{\Pi} $ de $I(\A)$ telle que $ \widetilde{\Pi}_{\infty} = \breve{\xi} $ et $ \widetilde{\Pi}_{w_0} $ est supercuspidale pour une place $w_0$ décomposée et que $ \widetilde{\Pi}_p = \widetilde{\pi}_p $ (Plus précisément $\widetilde{\Pi}_p$ et $\widetilde{\pi}_p$ sont dans la même classe d'équivalence d'inertie; pourtant, grâce au lemme \ref{itm: torsion non ramifiée}, il suffit de considérer une représentation dans chaque classe d'équivalence d'inertie. Nous pouvons donc supposer que $\widetilde{\Pi}_p = \widetilde{\pi}_p$). En particulier, le $A$-paramètre $\widetilde{\varPsi}$ de $\widetilde{\Pi}$ est simple et on obtient son groupe centralisateur
		\[
		S^{\natural}_{\widetilde{\varPsi}} = \Z/2 \Z.
		\] 
		
		Par la suite, on va essayer d'appliquer \ref{itm: galois} et \ref{itm : remar} pour $\widetilde{\varPsi}$. En prenant la partie $ \widetilde{\Pi}^{\infty, p}$-isotypique de la suite spectrale (\ref{itm : 1ss}) on a une suite spectrale $G(\Q_p) \times W_{F_p}$-équivariante (remarquons que comme $ \widetilde{\Pi}_{\infty} = \breve{\xi} $, la condition $ \Pi_{\infty} = \breve{\xi} $ est une conséquence de la condition $ \Pi^p = (\widetilde{\Pi})^p $)
		\begin{equation*}
			E_2^{tq} = | \ker^1 (\Q, \overset{\bullet}{G}) | \sum_{\substack{\Pi \in \mathcal{A}(I) \\ \Pi^{p} = (\widetilde{\Pi})^{p} }} \left( \Ext^t_{\J_{b}(\Q_p)} \left( H^q_c(\mathcal{M}), \Pi_p \right)_{cusp}  \right) \Longrightarrow \left( H^{t+q}_c(\Sh, \mathcal{L}_{\xi}) \right)_{p-cusp}[\widetilde{\Pi}^{\infty, p}].
		\end{equation*}
		
		Puisque le paquet $\Pi_{\varphi}(\J_b(\Q_p))$ ne contient que des représentations supercuspidales, le lemme \ref{itm: dégénère} implique que $\Ext^t_{\J_{b}(\Q_p)} \left( H^q_c(\mathcal{M}_{K_p}, \overline{\Q}_{\ell}(n-1)), \Pi_p \right) = 0$ si $t > 0$ et la suite spectrale ci-dessus  dégénère en $E_2$. On obtient donc des isomorphismes:
		\begin{equation} \phantomsection \label{itm: 9}
			| \ker^1 (\Q, \overset{\bullet}{G}) | \sum_{\substack{\Pi \in \mathcal{A}(I) \\ \Pi^{p} = (\widetilde{\Pi})^{p} }} \left(  \hom_{\J_b(\Q_p)} \left( H^i_c(\mathcal{M}), \Pi_p \right)_{cusp}  \right) = \left( H^{i}_c(\Sh, \mathcal{L}_{\xi}) \right)_{p-cusp}[\widetilde{\Pi}^{\infty, p}].
		\end{equation}
		
		D'autre part, la proposition \ref{itm : comparer} couplée avec la formule de Matsushima pour la $L^2$-cohomologie, généralisée par Borel et Casselman (\cite{BC}), nous donne une décomposition:
		\begin{align} \phantomsection \label{itm : 10}
			H^{n-1}_c(\Sh, \mathcal{L}_{\xi})_{\text{p-cusp}}[\widetilde{\Pi}^{\infty, p}] &= | \ker^1 (\Q, \overset{\bullet}{G}) | \sum_{\substack{\Pi \in \mathcal{A}(\overset{\bullet}{G})_{\xi} \\ \Pi_p \ \text{supercuspidale} \\ \Pi^{\infty, p} = \widetilde{\Pi}^{\infty, p} } } \Pi_p \otimes \rho (\Pi^{\infty}). \\
			H^{i}_c(\Sh, \mathcal{L}_{\xi})_{\text{p-cusp}}[\widetilde{\Pi}^{\infty, p}] &= 0 \quad \quad (i \neq n-1) .
		\end{align}
		où $\rho (\Pi^{\infty})$ est une représentation continue de dimension finie de $Gal(\overline{\overset{\bullet}{F}}/ \overset{\bullet}{F} )$ et $\mathcal{A}(\overset{\bullet}{G})_{\xi}$ est l'ensemble des représentations automorphes de $\overset{\bullet}{G}$ cohomologiques pour $\xi$. De plus on a 
		\begin{equation} \phantomsection \label{itm : dim1}
			\dim \rho (\Pi^{\infty})  = \sum_{\pi_{\infty}} m(\pi_{\infty} \otimes \Pi^{\infty}) \dim H^{n-1} (\Lie \overset{\bullet}{G}(\R), K, \pi_{\infty} \otimes \xi)	
		\end{equation}
		où $\pi_{\infty}$ varie dans le paquet cohomologique $ \Pi_{(\xi)}(\overset{\bullet}{G}(\R)) $.
		
		Puisque $\xi$ est de poids régulier et $\overset{\bullet}{G}(\R)$ est le groupe de similitudes unitaires de signature $(1, n-1)$, d'après le corollaire VI.2.7 de \cite{HT01}, on en déduit qu'il y a exactement $n$ représentations $\pi_j$ ($ j \in \{1, \cdots, n \} $) cohomologiques pour $\xi$. On a également :
		\begin{equation} \phantomsection \label{itm : dim2}
			%H^{i} (\Lie \overset{\bullet}{G}(\R), K, \pi_j \otimes \xi) = 0 \ \text{si} \ i \neq n-1 \ \ \ \text{et} \ \ \ 
			\dim H^{n-1} (\Lie \overset{\bullet}{G}(\R), K, \pi_j \otimes \xi) = 1.
		\end{equation}
		
		On en déduit que \textit{$\sigma_{\pi_p, \pi_p'}^i = 0$ si $ i \neq n-1 $} de sorte que
		\begin{equation*} \label{itm: 1ss galois}
			\sum_{\substack{\Pi \in \mathcal{A}(I) \\ \Pi^p = \widetilde{\Pi}^p}} \left(\hom_{\J_b(\Q_p)} \left( H^{n-1}_c(\mathcal{M}), \Pi_p \right)_{p-cusp}  \right) = \sum_{\substack{\Pi' \in \mathcal{A}(\overset{\bullet}{G}) \\ (\Pi')^{\infty, p} = \widetilde{\Pi}^{\infty, p} } } \Pi'_p \otimes \rho((\Pi')^{\infty})_p 	
		\end{equation*}
		
		En écrivant  
		$ \displaystyle \hom_{\J_b(\Q_p)} \left( H^{n-1}_c(\mathcal{M}), \Pi_p \right)_{cusp}   = \sum_{\widetilde{\Pi}_p'} \widetilde{\Pi}_p' \otimes \sigma_{\Pi_p, \widetilde{\Pi}_p'} $	où $\widetilde{\Pi}_p'$ parcourt l'ensemble des classes d'équivalences de représentations supercuspidales de $G(\Q_p)$, l'égalité ci-dessus s'écrit sous la forme
		\begin{equation} \phantomsection \label{itm : ...}
			\sum_{\substack{\Pi \in \mathcal{A}(I) \\ \Pi^p = \widetilde{\Pi}^p}} \left( \sum_{\widetilde{\Pi}_p'} \widetilde{\Pi}_p' \otimes \sigma_{\Pi_p, \widetilde{\Pi}_p'} \right) = \sum_{\substack{\Pi' \in \mathcal{A}(\overset{\bullet}{G}) \\ (\Pi')^{\infty, p} = \widetilde{\Pi}^{\infty, p} } } \Pi'_p \otimes \rho((\Pi')^{\infty})_p 	
		\end{equation}
		
		Or si $\Pi' \in \mathcal{A}(\overset{\bullet}{G})$ et $ (\Pi')^{\infty, p} = \widetilde{\Pi}^{\infty,p} $, on en déduit que $\Pi'_p$ et $\widetilde{\Pi}_p$ sont dans le même paquet, à savoir le paquet $\Pi_{ \varphi }(G(\Q_p))$. De même on voit que $\Pi_p \in \Pi_{ \varphi }(\J_b(\Q_p))$. On en déduit que $\sigma_{\Pi_p, \widetilde{\Pi}_p'} = 0$ si $\widetilde{\Pi}_p' \notin \Pi_{ \varphi } (G(\Q_p))$. 
		
		Or, le groupe centralisateur global $S_{\widetilde{\varPsi}}^{\natural}$ est $\Z / 2\Z$, en particulier $ S_{\widetilde{\varPsi}}^{\natural} \simeq Z(\widehat{\overset{\bullet}{U^*}})^{\Gamma} $ où $ \Gamma $ est le groupe de Galois, $ \overset{\bullet}{U} $ est le groupe unitaire noyau du facteur de similitude de $\overset{\bullet}{G}$ et $\overset{\bullet}{U^*}$ est sa forme intérieure quasi-déployée. On en déduit que $ \Pi_{\varPsi} (I, \varrho, \epsilon)  = \Pi_{\varPsi}(I, \varrho)$ car la condition sur $\epsilon$ disparaît (consulter \ref{itm : conditions sur le centre} et \ref{itm : paquets}). D'après le théorème \ref{itm: global}, on voit que $\breve{\xi} \otimes \pi_p \otimes \widetilde{\Pi}^{\infty, p}$ est toujours une forme automorphe lorsque $\pi_p$ varie dans $\Pi_{\varphi}(\J_b(\Q_p))$. L'égalité (\ref{itm : ...}) se réécrit donc sous la forme
		
		\[
		\sum_{\pi_p \in \Pi_{\varphi}(\J_b(\Q_p))} \sum_{\pi_p' \in \Pi_{\varphi}(G(\Q_p))} \pi_p' \otimes \sigma_{\pi_p, \pi_p'} = \sum_{\pi_p' \in \Pi_{\varphi}(G(\Q_p))} \pi_p' \otimes \rho(\pi_p' \otimes \widetilde{\Pi}^{\infty, p})_p.
		\]
		
		En particulier pour $\pi_p' \in \Pi_{\varphi}(G(\Q_p))$ fixée, on a 
		\[
		\sum_{\pi_p \in \Pi_{\varphi}(\J_b(\Q_p))} \sigma_{\pi_p, \pi_p'} = \rho(\pi_p' \otimes \widetilde{\Pi}^{\infty, p})_p.
		\]
		
		D'après le résultat de multiplicité $1$ (théorème \ref{itm: global} et proposition \ref{itm: de U à GU}), on voit que $m(\pi_j \otimes \pi_p' \otimes \widetilde{\Pi}^{\infty, p})$ est soit nulle soit égale à $1$. %Or, le groupe du centralisateur global $S_{\widetilde{\varPsi}}^{\natural}$ est $\Z / 2\Z$, on a en déduit que $ \Pi_{\varPsi} (U, \varrho, \epsilon)  = \Pi_{\varPsi}(U, \varrho)$ car la condition sur $\epsilon$ disparaît (voir \ref{itm : paquets}).
		Comme au-dessus, le théorème \ref{itm: global} implique que $\pi_i \otimes \pi_p' \otimes \widetilde{\Pi}^{\infty, p}$ est toujours une forme automorphe lorsque $\pi_p'$ varie dans $\Pi_{\varphi}(G(\Q_p))$ et $i$ varie dans $\{ 1, \cdots, n \}$.
		
		Cela implique que $\dim \rho(\pi_p' \otimes \widetilde{\Pi}^{\infty, p}) = n$. D'après le corollaire \ref{itm: galois} et la remarque \ref{itm : remar}, on a alors
		\[
		\rho(\pi_p' \otimes \widetilde{\Pi}^{\infty, p})_p = \left( r_{\mu} \circ \varphi_{|F_p} \right)  \otimes | \cdot |^{-\frac{n-1}{2}}.
		\]
		
		Autrement dit on a
		\begin{equation} \phantomsection \label{itm:dim galois}
			\sum_{\pi_p \in \Pi_{\varphi}(J_b(\Q_p))} \sigma_{\pi_p, \pi_p'} = \left( r_{\mu} \circ \varphi_{|F_p} \right)  \otimes | \cdot |^{-\frac{n-1}{2}}.
		\end{equation}
	\end{demn}
	
	\textbf{Étape 2 : Ramener au problème de comptage à l'infini.}
	
	Fixons une représentation $\pi_p \in \Pi_{\varphi}(\J_b(\Q_p))$ et  appliquons la proposition \ref{itm : globaliser auto} pour $I$ et la représentation $\pi_p$, on trouve un $L$-paramètre global discret générique $\Omega$ dont le $L$-paquet contient une représentation automorphe $\overline{\Pi} \in \mathcal{A}(I(\A))$ telle que 
	\begin{enumerate}
		\item[-] $\overline{\Pi}_p$ et $ \pi_p $ sont dans la même classe d'équivalence d'inertie,
		\item[-] $ \overline{\Pi}_{\infty} = \breve{\xi} $,
		\item[-] $ \overline{\Pi} \cong \Pi'$ dès lors que $ (\overline{\Pi})^p \cong (\Pi')^p $.
	\end{enumerate}	 
	
	De plus, grâce au lemme \ref{itm: torsion non ramifiée}, nous pouvons supposer $\overline{\Pi}_p = \pi_p, $.
	
	En prenant la partie $ \overline{\Pi}^{\infty, p}$-isotypique de la suite spectrale (\ref{itm : 1ss}) on a : 
	\begin{equation*} \phantomsection 
		E_2^{tq} = | \ker^1 (\Q, \overset{\bullet}{G}) |\left(\Ext^t_{\J_b(\Q_p)} \left( H^q_c(\mathcal{M}), \pi_p \right)_{cusp}  \right) \Longrightarrow \left( H^{t+q}_c(\Sh, \mathcal{L}_{\xi}) \right)_{p-cusp}[\overline{\Pi}^{\infty, p}]
	\end{equation*}
	
	Comme dans la première partie, le lemme \ref{itm: dégénère} simplifie la suite spectrale, on obtient donc l'égalité
	\begin{equation*} \label{itm:1ss galois}
		\sum_{\substack{\Pi \in \mathcal{A}(I) \\ \Pi^p = \overline{\Pi}^p}} \left(\hom_{\J_b(\Q_p)} \left( H^{n-1}_c(\mathcal{M}), \overline{\Pi}_p \right)_{cusp}  \right) = \sum_{\substack{\Pi \in \mathcal{A}(\overset{\bullet}{G}) \\ \Pi^{\infty, p} = \overline{\Pi}^{\infty, p} } } \Pi_p \otimes \rho(\Pi^{\infty})_p 	
	\end{equation*}
	
	Or $ \overline{\Pi} \cong \Pi'$ dès lors que $ (\overline{\Pi})^p \cong (\Pi')^p $, l'égalité au-dessus se récrit donc sous la forme
	\[
	\sum_{\widetilde{\pi}_p' \in \Pi_{\varphi}(G(\Q_p))} \widetilde{\pi}_p' \otimes \sigma_{\pi_p, \widetilde{\pi}_p'} = \sum_{\widetilde{\pi}_p' \in \Pi_{\varphi}(G(\Q_p))} \widetilde{\pi}_p' \otimes \rho(\widetilde{\pi}_p' \otimes \overline{\Pi}^{\infty, p})_p.
	\]
	
	En prenant la partie $[\pi_p']$-isotypique, on en déduit en particulier que $ \pi_p' \otimes \sigma_{\pi_p, \pi_p'} = \pi_p' \otimes \rho(\pi_p' \otimes \overline{\Pi}^{\infty, p})_p $ et donc que $\dim \rho (\pi_p' \otimes \overline{\Pi}^{\infty, p}) = \dim \sigma_{\pi_p, \pi_p'}$.
	
	D'après (\ref{itm : dim1}) et (\ref{itm : dim2}), on a $\dim \rho (\pi_p' \otimes \overline{\Pi}^{\infty, p}) = \sum_{i =1}^n m(\pi_i \otimes \pi_p' \otimes \overline{\Pi}^{\infty, p}) $. La formule de multiplicité (théorème \ref{itm: global} et \ref{itm: de U à GU}) implique alors que $\dim \sigma_{\pi_p, \pi_p'}$ est égale au nombre de représentations $\pi_i$ telles que $\pi_i \otimes \pi_p' \otimes \overline{\Pi}^{\infty, p}$ est une forme automorphe. D'après le théorème \ref{itm: global} cela \textit{équivaut à demander} l'égalité suivante :
	\[
	\langle \pi_i, s_{\infty} \rangle_{\varrho_{\infty}, z_{\infty}} \cdot \langle \pi_p', s_p \rangle_{\varrho_v, z_v} \prod_{v \neq \infty, p} \langle \overline{\Pi}_v, s_v \rangle_{\varrho_v, z_v} = 1, \quad \forall s \in S_{\Omega}^{\natural}
	\]
	
	Or $ \breve{\xi} \otimes \pi_p \otimes \overline{\Pi}^{\infty, p}$ étant une forme automorphe dans $\mathcal{A}(I(\A))$, on a l'égalité suivante :
	\[
	\langle \breve{\xi}, s_{\infty} \rangle_{\varrho_{\infty}, z_{\infty}} \cdot \langle \pi_p, s_p \rangle_{\varrho_v, z_v} \prod_{v \neq \infty, p} \langle \overline{\Pi}_v, s_v \rangle_{\varrho_v, z_v} = 1, \quad \forall s \in S_{\Omega}^{\natural}.
	\]
	
	On en déduit que $\pi_i \otimes \pi_p' \otimes \overline{\Pi}^{\infty, p}$ est une forme automorphe si et seulement si
	\begin{equation} \phantomsection \label{itm : equa}
		\langle \pi_i, s_{\infty} \rangle_{\varrho_{\infty}, z_{\infty}} \cdot \langle \pi_p', s_p \rangle_{\varrho_v, z_v} \cdot \langle \breve{\xi}, s_{\infty} \rangle_{\varrho_{\infty}, z_{\infty}} \cdot \langle \pi_p, s_p \rangle_{\varrho_v, z_v} = 1, \quad \forall s \in S_{\Omega}^{\natural}.
	\end{equation}
	
	Donc, afin de calculer la dimension des représentations galoisiennes, on doit calculer les caractères du groupe centralisateur à l'infini.
	
	\textbf{Étape 3 : Calcul de caractères à l'infini}
	
	Considérons les  $L$-paquets $\Pi_{(\xi)}(I(\R))$ et $\Pi_{(\xi)}(\overset{\bullet}{G}(\R))$. Comme $\xi$ est un $L$-paramètre discret, on peut écrire $\xi^n := \eta_{\chi_{\kappa} *} \xi$ sous la forme $ \xi^n = \xi_1 \oplus \cdots \oplus \xi_n $ où les $\xi_i$ sont deux à deux distincts. Le groupe centralisateur est donné par 
	\[
	S_{\xi}^{\natural} \cong \prod_{i = 1}^n O(1, \C)  \cong \prod_{i = 1}^n (\Z / 2\Z).
	\]
	
	Le groupe $Z(\widehat{G})^{\Gamma} = \{ \pm 1 \}$ est envoyé diagonalement dans $S_{\xi}^{\natural}$.
	
	Le $L$-paquet $\Pi_{(\xi)}(\overset{\bullet}{G}(\R))$ est constitué par $n$ représentations $\pi_i$ qui correspondent à $n$ caractères $\tau_{\pi_i}$ de $S_{\xi}^{\natural}$ dont la restriction sur $Z(\widehat{G})^{\Gamma}$ est un caractère de $Z(\widehat{G})^{\Gamma}$ calculé en fonction de $a^G_{\infty}$. De même le $L$-paquet $\Pi_{(\xi)}(I(\R))$ est constitué par la représentation $\breve{\xi}$ qui correspondent à un caractère $\tau_{\breve{\xi}}$ de $S_{\xi}^{\natural}$ dont la restriction sur $Z(\widehat{G})^{\Gamma}$ est calculé par $a^I_{\infty}$. On va calculer les $n$ caractères $\tau_{\breve{\xi}} \cdot \tau_{\pi_i}$ où $i \in \{ 1, \cdots n \}$. %de $S_{\xi}^{\natural}$. 
	
	\begin{remarque} \phantomsection \label{itm: produit}
		Puisque $a^G_{\infty} + a^I_{\infty} \equiv 1 \text{ mod } 2$, la restriction de $\tau_{\breve{\xi}} \cdot \tau_{\pi_i}$ sur $Z(\widehat{G})^{\Gamma}$ est le caractère non trivial. On en déduit que pour tout $i$, on a
		\[
		\prod_{j = 1}^{n} \tau_{\breve{\xi}} \cdot \tau_{\pi_i} \big( 1, \cdots,1, \underbrace{-1}_{j}, 1, \cdots, 1 \big) = \tau_{\breve{\xi}} \cdot \tau_{\pi_i} \big( -1, \cdots, -1 \big) = -1.
		\]
	\end{remarque}
	
	Choisissons un $L$-paramètre supercuspidal $\psi$ de $G(\Q_p)$ de sorte que $ \eta_{\chi_{\kappa} *} \psi = \psi_{F_p} = \psi^{n_1}_1 \oplus \psi^{n_2}_2 $ avec $\dim \psi^{n_1}_1 = 1$ et $\dim \psi^{n_2}_2 = n-1$. On a 
	\[
	S_{\psi} \cong \prod_{i = 1}^2 O(1, \C)  \cong \prod_{i = 1}^2 (\Z / 2\Z) \cong S_{\psi}^{\natural}.
	\]	
	
	Fixons une représentation $\pi_p \in \Pi_{\psi}(\J_b(\Q_p))$ et choisissons une représentation $\pi_p' \in \Pi_{\psi}(G(\Q_p))$ de sorte que le caractère $\tau_{\pi_p} \cdot \tau_{\pi_p'}$ de $S^{\natural}_{\psi}$ est donné par
	\[
	\tau_{\pi_p} \cdot \tau_{\pi_p'} (-1,1) = -1 \quad \quad \quad \tau_{\pi_p} \cdot \tau_{\pi_p'} (1,-1) = 1.
	\]
	
	En utilisant la même construction que celle dans la proposition \ref{globaliser les paramètres locaux} (et aussi le lemme \ref{itm: torsion non ramifiée}), il existe, pour tout $i_0$ fixé, un paramètre global discret générique $ \Phi(i_0)$ de la forme $ \Phi(i_0)^n = \Phi^{n_1}_1(i_0) \boxplus \Phi^{n_2}_2(i_0) $ de sorte que
	\begin{enumerate}
		\item[$\bullet$] $\Phi^{n_j}_j(i_0)$ est un $L$-paramètre global discret générique pour $j = 1, 2$,
		\item[$\bullet$] $(\Phi(i_0))_{\infty} = \xi$ et $(\Phi^{n_1}_1(i_0))_{\infty} = \xi_{i_0}$,
		\item[$\bullet$] $(\Phi^{n_1}_1(i_0))_p = \psi^{n_1}_1 $ et $(\Phi^{n_2}_2(i_0))_p = \psi^{n_2}_2$,
		\item [$\bullet$] Le morphisme canonique 
		\[
		S^{\natural}_{\Phi(i_0)} \longrightarrow S^{\natural}_{\psi}
		\] 
		%		\[
		%		S^{\natural}_{\Phi} \longrightarrow S^{\natural}_{\Phi_v}
		%		\]
		est un isomorphisme.
	\end{enumerate}
	%\begin{remarque}
	%	La condition $4$ implique la condition $3$.
	%\end{remarque}
	
	Détaillons maintenant les groupe centralisateurs ainsi que les morphismes de localisation 
	\[
	S_{\Phi(i_0)} \cong \prod_{i = 1}^2 O(1, \C)  \cong \prod_{i = 1}^2 (\Z / 2\Z) \cong S_{\Phi(i_0)}^{\natural}.
	\]
	
	Les morphismes de localisation $ S_{\Phi(i_0)} \longrightarrow S_{\psi} $ et $ S_{\Phi(i_0)} \longrightarrow S_{\xi} $ sont donnés par les formules ci-dessous
	
	\begin{align*}
		S_{\Phi(i_0)}^{\natural} \quad \quad & \longrightarrow \quad \quad S_{\psi}^{\natural} & S_{\Phi(i_0)}^{\natural} \quad \quad & \longrightarrow \quad \quad \quad S^{\natural}_{\xi} & \\
		(x_1, x_2) \quad & \longmapsto  (x_1, x_2) & (x_1, x_2) \quad & \longmapsto  (\underbrace{x_2, \cdots x_2}_{i_0-1}, x_1, \underbrace{x_2 \cdots x_2}_{n - i_0}) &
	\end{align*}
	
	\begin{notation}
		Si on a une représentation irréductible $\tau$ de $S_{\psi}^{\natural}$ ou de $S_{\xi}^{\natural}$, on note $\overline{\tau}^{i_0}$ la représentation induite sur $S^{\natural}_{\Phi(i_0)}$ par les morphismes de localisation.	
	\end{notation}
	
	Utilisons la proposition \ref{itm : globaliser auto} et le lemme \ref{itm: torsion non ramifiée} pour $I$ et la représentation $\pi_p$ et le $L$-paramètre $\Phi(i_0)$, on trouve une représentation automorphe $\overline{\Pi}(i_0) \in \Pi_{\Phi(i_0)}(I(\A))$ telle que 
	\begin{enumerate}
		\item [-] $\overline{\Pi}(i_0)_p = \pi_p, \ \overline{\Pi}(i_0)_{\infty} = \breve{\xi} $, 
		\item[-] $ \overline{\Pi}(i_0) \cong \Pi'$ dès lors que $ (\overline{\Pi}(i_0))^p \cong (\Pi')^p $.
	\end{enumerate}	
	
	En prenant la partie $ \overline{\Pi}(i_0)^{\infty, p}$-isotypique de la suite spectrale (\ref{itm : 1ss}) on a : 
	\begin{equation} \phantomsection \label{itm : Adam}
		E_2^{tq} = | \ker^1 (\Q, \overset{\bullet}{G}) |\left( \Ext^t_{\J_b(\Q_p)} \left( H^q_c(\mathcal{M}), \pi_p \right)_{cusp}  \right) \Longrightarrow \left( H^{t+q}_c(\Sh, \mathcal{L}_{\xi}) \right)_{p-cusp}[\overline{\Pi}(i_0)^{\infty, p}]
	\end{equation}
	
	Comme dans l'étape $2$, on obtient l'égalité $ \pi_p' \otimes \sigma_{\pi_p, \pi_p'} = \pi_p' \otimes \rho(\pi_p' \otimes \overline{\Pi}(i_0)^{\infty, p})_p $ et donc $\dim \rho (\pi_p' \otimes \overline{\Pi}(i_0)^{\infty, p}) = \dim \sigma_{\pi_p, \pi_p'}$.
	
	D'une part, l'égalité (\ref{itm:dim galois}) couplée avec le fait que $\psi_{F_p} = \psi^{n_1}_1 \oplus \psi^{n_2}_2$ avec $\dim \psi^{n_1}_1 = 1$ et $\dim \psi^{n_2}_2 = n-1$ implique que $\dim \sigma_{\pi_p, \pi_p'} \in \{ 0, 1, n-1, n \}$.
	
	De plus $ \dim \rho (\pi_p' \otimes \overline{\Pi}(i_0)^{\infty, p}) = \sum_{i =1}^n m(\pi_i \otimes \pi_p' \otimes \overline{\Pi}(i_0)^{\infty, p}) $. Comme dans la fin de l'étape 2, la formule de multiplicité implique que $\dim \sigma_{\pi_p, \pi_p'}$ est égale au nombre de représentations $\pi_i$ telles que  $\overline{\tau}^{i_0}_{\breve{\xi}} \cdot \overline{\tau}^{i_0}_{\pi_i} = \overline{\tau}^{i_0}_{\pi_p} \cdot \overline{\tau}^{i_0}_{\pi_p'} $.
	
	Montrons que $\dim \sigma_{\pi_p, \pi_p'} = 1$. Supposons le contraire, il y a $3$ possibilités.
	
	\textbf{Cas 1 :} $\dim \sigma_{\pi_p, \pi_p'} = 0$.
	
	Dans ce cas on a $\overline{\tau}^{i_0}_{\breve{\xi}} \cdot \overline{\tau}^{i_0}_{\pi_i} \neq \overline{\tau}^{i_0}_{\pi_p} \cdot \overline{\tau}^{i_0}_{\pi_p'} $ pour tout $i$. La description des morphismes de localisation implique que pour tout $ i \in \{ 1, \cdots, n \} $ on a 
	\[
	\tau_{\breve{\xi}} \cdot \tau_{\pi_i} \big(1,\cdots,1, \underbrace{-1}_{i_0}, 1, \cdots, 1 \big) = 1.
	\] 
	
	En faisant varier $i_0$, on en déduit que tous les $ \tau_{\breve{\xi}} \cdot \tau_{\pi_i} $ coïncident, contradiction.
	
	\textbf{Cas 2 :} $\dim \sigma_{\pi_p, \pi_p'} = n$.
	
	La description des morphismes de localisation implique que pour tout $ i \in \{ 1, \cdots, n \} $ on a 
	\[
	\tau_{\breve{\xi}} \cdot \tau_{\pi_i} \big(1,\cdots,1, \underbrace{-1}_{i_0}, 1, \cdots, 1 \big) = -1.
	\] 
	
	Comme précédemment en faisant varier $i_0$ , on en déduit que les $ \tau_{\breve{\xi}} \cdot \tau_{\pi_i} $ sont égaux pour tout $i$, contradiction.
	
	\textbf{Cas 3 :} $\dim \sigma_{\pi_p, \pi_p'} = n-1$.
	
	La description des morphismes de localisation implique qu'il y a exactement $n-1$ indices $ i \in \{ 1, \cdots, n \} $ tels que 
	\[
	\tau_{\breve{\xi}} \cdot \tau_{\pi_i} \big(1,\cdots,1, \underbrace{-1}_{i_0}, 1, \cdots, 1 \big) = -1.
	\] 
	
	Comme $i_0$ est arbitraire, on en déduit que 
	\[
	\prod_{i_0=1}^{n} \prod_{i = 1}^{n} \tau_{\breve{\xi}} \cdot \tau_{\pi_i} \big( 1, \cdots, 1, \underbrace{-1}_{i_0}, 1, \cdots,1 \big) = (-1)^{(n-1) \cdot n} = 1.
	\]
	
	Or d'après la remarque \ref{itm: produit} et le fait que $n$ est impair, on en déduit que
	\[
	\prod_{i_0=1}^{n} \prod_{i = 1}^{n} \tau_{\breve{\xi}} \cdot \tau_{\pi_i} \big( 1, \cdots, 1, \underbrace{-1}_{i_0}, 1, \cdots,1 \big) = \prod_{i = 1}^{n} \tau_{\breve{\xi}} \cdot \tau_{\pi_i} \big( -1, \cdots, -1 \big) = (-1)^n = (-1).
	\]
	
	Ceci est une contradiction. 
	
	On en déduit que $ \dim \sigma_{\pi_p, \pi_p'} = 1 $, autrement dit pour tout $i_0$, il y a exactement un indice $i \in \{ 1, \cdots, n \}$ tel que 
	\[
	\tau_{\breve{\xi}} \cdot \tau_{\pi_i} \big(1,\cdots,1, \underbrace{-1}_{i_0}, 1, \cdots, 1 \big) = -1.
	\]
	
	En utilisant la remarque \ref{itm: produit}, on vérifie aisément que les $n$ caractères $\tau_{\breve{\xi}} \cdot \tau_{\pi_i}$ de $(\Z / 2\Z)^n$ avec $i \in \{ 1, \cdots, n \}$ sont les caractères $\lambda_i$ où
	\begin{equation} \phantomsection \label{itm : desc}
		\lambda_i \big(1, \cdots, \underbrace{-1}_{\text{j}}, \cdots, 1\big) = \left\lbrace \begin{array}{ccc}
			-1 & si & i=j \\
			1 & si & 1 \leq i \neq j \leq n.
		\end{array} \right.
	\end{equation}
	
	\textbf{Étape 4 : Fin de la démonstration}

	\begin{lemme}
		Il existe un paramètre global générique  $ \varPsi = (\varPsi^n, \widetilde{\varPsi})$ où $ \varPsi^n = \varPsi^{n_1}_1 \boxplus \varPsi^{n_2}_2 $ avec $\varPsi_i^{n_i}$ des paramètres simples génériques de sorte que
		\begin{enumerate}
			\item[i)] $(\varPsi^{n_1}_1)_p = \varphi^{n_1}_1$
			\item[ii)] $(\varPsi^{n_2}_2)_p = \varphi^{n_2}_2 \oplus \cdots \oplus \varphi^{n_r}_r$.
			\item[iii)] Il y a une place $q$ de $\Q$ décomposée dans le corps quadratique imaginaire $ \overset{\bullet}{F} $ telle que 
			\[
			(\varPsi^{n_1}_1)_q = \chi_1 \oplus \cdots \oplus \chi_{n_1} \quad \quad \quad (\varPsi^{n_2}_2)_q = \chi_{n_1 + 1} \oplus \cdots \oplus \chi_n
			\]
			où les $\chi_i$ sont non ramifiées et deux à deux distincts.
			\item[iv)] $ \varPsi_{\infty} = \xi$. %= \_1 \oplus \cdots \oplus \eta_n $ un paramètre discret régulier. 
		\end{enumerate}
	\end{lemme}
	\begin{proof}
		On utilise la même construction que celle dans la proposition \ref{globaliser les paramètres locaux}.
	\end{proof}  
	
	% Soit $\Pi(\rho)$ le $L$-paquet des représentations de séries discrètes de $\overset{\bullet}{G}(\C) = GU(1, n-1)(\C)$ cohomologiques pour $\rho$, en particulier le paquet $\Pi(\rho)$ contient exactement $n$ représentations. D'après le lemme ??, le paquet $\Pi(\rho)$ se coïncide avec le paquet $\Pi_{\rho}(\overset{\bullet}{G}) (\C)$.\\
	
	On fixe une représentation $\pi_p \in \Pi_{\varphi}(\J_b(\Q_p))$. Comme dans la démonstration de la proposition \ref{itm : globaliser auto} (et aussi le lemme \ref{itm: torsion non ramifiée}), il existe une représentation automorphe $\widetilde{\Pi} $ de $I(\A)$ dans le paquet $\Pi_{\varPsi}(I)$ de sorte que $\widetilde{\Pi}_p = \pi_p$ et $\widetilde{\Pi}_{\infty} = \breve{\xi}$ (mais $\Pi^p = \widetilde{\Pi}^p$ \textit{n'implique pas forcément} $\Pi = \widetilde{\Pi}$). 
	
	Prenons la partie $[\widetilde{\Pi}^{\infty,p}]$ de la suite spectrale (\ref{itm : 1ss}). Grâce au lemme \ref{itm: dégénère}, la suite spectrale dégénère, on obtient donc des isomorphismes
	\[
	| \ker^1 (\Q, \overset{\bullet}{G}) | \sum_{\substack{\Pi \in \mathcal{A}(I) \\ \Pi^p = \widetilde{\Pi}^p}} \left( \hom_{\J_b(\Q_p)} \left( H^q_c(\mathcal{M}), \Pi_p \right)_{cusp}  \right) \otimes [\widetilde{\Pi}^{\infty, p}] = \left( H^q_c(\Sh, \mathcal{L}_{\xi}) \right)_{p-cusp}[\widetilde{\Pi}^{\infty, p}].
	\] 
	
	% La formule de Matsushima couplée avec le fait que $\eta$ est un paramètre discret implique que $H^q(\Sh, \mathcal{L}_{\rho})$ se concentre en degré moitié. On a alors
	En utilisant la formule de Matsushima couplée avec la proposition \ref{itm : comparer}, on en déduit que 
	
	% \begin{equation*}
	% \left( H^q(\Sh, \mathcal{L}_{\rho}) \right)_{cusp}[\widetilde{\Pi}^{\infty, p}] = | \ker^1 (\Q, \overset{\bullet}{G}) | \sum_{\substack{\Pi \in \mathcal{A}(\overset{\bullet}{G}) \\ \Pi^{\infty, p} = \widetilde{\Pi}^{\infty, p} } } \Pi_p \otimes \rho(\Pi)_p
	% \end{equation*}
	% où $\rho(\Pi)$ est une représentation continue de dimension finie de $Gal(\overline{E}/E)$.
	
	% On en déduit que
	\begin{equation} \label{itm : 1ss galois}
		\sum_{\substack{\overline{\Pi} \in \mathcal{A}(I) \\ \overline{\Pi}^p = \widetilde{\Pi}^p}} \left(\hom_{\J_b(\Q_p)} \left( H^q_c(\mathcal{M}), \overline{\Pi}_p \right)_{cusp}  \right) = \sum_{\substack{\Pi \in \mathcal{A}(\overset{\bullet}{G}) \\ \Pi^{\infty, p} = \widetilde{\Pi}^{\infty, p} } } \Pi_p \otimes \rho(\Pi)_p.
	\end{equation}
	
	D'après le calcul dans la section \ref{itm: centra}, on a
	\[
	S_{\varPsi} \cong \prod_{i = 1}^2 O(1, \C)  \cong \prod_{i = 1}^2 (\Z / 2\Z) \cong S_{\varPsi}^{\natural}.
	\]
	\[
	S_{\varphi} \cong \prod_{i = 1}^r O(1, \C)  \cong \prod_{i = 1}^r (\Z / 2\Z) \cong S_{\varphi}^{\natural} \quad \quad S_{\varPsi_{\infty}} \cong \prod_{i = 1}^n O(1, \C)  \cong \prod_{i = 1}^n (\Z / 2\Z) \cong S_{\varPsi_{\infty}}^{\natural}.
	\]
	
	Les morphismes de localisation $ S_{\varPsi} \longrightarrow S_{\varphi} $ et $ S_{\varPsi} \longrightarrow S_{\varPsi_{\infty}} $ sont donnés comme ci-dessous
	
	\begin{align*}
		S_{\varPsi}^{\natural} \quad \quad & \longrightarrow \quad \quad S_{\varphi}^{\natural} & S_{\varPsi}^{\natural} \quad \quad & \longrightarrow \quad \quad \quad S_{\varPsi_{\infty}} & \\
		(x_1, x_2) \quad & \longmapsto  (x_1, \underbrace{x_2, \cdots, x_2}_{r-1}) & (x_1, x_2) \quad & \longmapsto  (\underbrace{x_1, \cdots x_1}_{n_1}, \underbrace{x_2, \cdots, x_2}_{n_2}) &
	\end{align*}
	
	Si on a une représentation irréductible $\tau$ de $S_{\varphi}^{\natural}$ ou de $S_{\varPsi_{\infty}}^{\natural}$, on note $\overline{\tau}$ la représentation induite sur $S^{\natural}_{\varPsi}$ par les morphismes de localisation. 
	
	Pour $ 1 \leq i \leq r$, on note $\tau_i$ le caractère de $S^{\natural}_{\varphi}$ défini par \ref{itm : rep}.

	%De même, pour $ 1 \leq i \leq n$ on note $\lambda_i$ la représentation irréductible de $S^{\natural}_{\varPsi_{\infty}}$ définie par
	%\[
	%\lambda_i \big(1, \cdots, \underbrace{-1}_{\text{j}}, \cdots, 1\big) = \left\lbrace \begin{array}{ccc}
	%-1 & si & i=j \\
	%1 & si & i \neq j
	%\end{array} \right.
	%\]
	
	Le groupe $Z(\widehat{G})^{\Gamma} = \{ \pm 1 \}$ est envoyé diagonalement dans $S_{\varphi}^{\natural}$. Les représentations dans le paquet $\Pi_{\varphi}(\J_b(\Q_p))$ sont paramétrées par les éléments de $\text{Irr}(S_{\psi}^{\natural}, \chi_{I})$ où $\chi_{I}$ est calculé en fonction de $a_p^I$ (voir \ref{itm : formule de multiplicité} et exemple \ref{itm : étendre}) et celles dans le paquet $\Pi_{\varphi}(G(\Q_p))$ sont paramétrées par les éléments de $\text{Irr}(S_{\psi}^{\natural}, \chi_{G})$ où $\chi_G$ est calculé en fonction de $a_p^G$. 
	
	Puisque les rôles des $\tau_i $ pour $ 1 \leq i \leq r$ sont identiques, il suffit de montrer que $\sigma_{\pi_p, \pi_p'} = r_{\mu_1} \circ (\varphi^{n_1}_{1}) \otimes | \cdot |^{-\frac{n-1}{2}}$ pour $\tau_{\pi_p} \cdot \tau_{\pi_p'} = \tau_1$.
	
	Choisissons $\pi_p' \in \Pi_{\varphi}(G(\Q_p))$ de sorte que $\tau_{\pi_p} \cdot \tau_{\pi_p'} = \tau_1$. D'après (\ref{itm : equa}) et la description explicite des caractères $\tau_{\breve{\xi}} \cdot \tau_{\pi_i}$ dans (\ref{itm : desc}), on en déduit que $\dim \sigma_{\pi_p, \pi_p'} = n_1$.	
	
	Comme $\widetilde{\Pi}$ est une représentation automorphe de $I(\A)$, on en déduit que $ \pi_i \otimes \pi_p' \otimes \widetilde{\Pi}^{\infty, p} $ est une représentation automorphe de $\overset{\bullet}{G}(\A)$ si et seulement si 
	
	\[
	\overline{\tau}_{\pi_i} \cdot \overline{\tau}_{\pi_p'} = \overline{\tau}_{\breve{\xi}} \cdot \overline{\tau}_{\pi_p}.
	\] 
	
	Puisque $\tau_{\pi_p} \cdot \tau_{\pi_p'} = \tau_1$, l'égalité ci-dessus équivaut à  
	\[
	\overline{\tau}_{\breve{\xi}} \cdot \overline{\tau}_{\pi_i} = \overline{\tau}_1.
	\]
	
	D'après l'étape $3$, on a $\overline{\tau}_{\breve{\xi}} \cdot \overline{\tau}_{\pi_i} = \overline{\lambda}_i$, donc $ \pi_i \otimes \pi_p' \otimes \widetilde{\Pi}^{\infty, p} $ est une représentation automorphe de $\overset{\bullet}{G}(\A)$ si et seulement si $\overline{\lambda}_i = \overline{\tau}_1$. En utilisant la description des morphismes de localisation ainsi que la définition de $\tau_1$ (c.f. \ref{itm : rep}) et de $\lambda_i$ (c.f. \ref{itm : desc}), on voit que $\overline{\lambda}_i = \overline{\tau}_1$ si et seulement si $ i \in \{ 1, \cdots n_1 \}$. On en déduit que $ \dim \rho(\pi_p' \otimes \widetilde{\Pi}^{\infty, p}) = n_1 $. Le corollaire \ref{itm: galois} implique que  
	\[ \rho(\pi_p' \otimes \widetilde{\Pi}^{\infty, p})_p =r_{\mu_1} \circ (\varphi^{n_1}_{1}) \otimes | \cdot |^{-\frac{n-1}{2}}. %= \hom_{S^{\natural}_{\varphi}}( \tau_{\pi_p'} \otimes \tau_{\pi_p} ,r_{\mu} \circ \widetilde{\varphi_{E_p}}) \otimes | \cdot |^{-\frac{n-1}{2}}. 
	\]
	
	Maintenant en prenant la partie $[\pi_{p}']$-isotypique dans (\ref{itm : 1ss galois}), on voit que
	\[
	\sum_{\substack{\overline{\pi}_p \in \Pi_{\varphi}(\J_b(\Q_p)) \\  \overline{\pi}_p \otimes \widetilde{\Pi}^p \in \mathcal{A}(I) }} \sigma_{\overline{\pi}_p, \pi_p'} \otimes \pi_p' = \pi_p' \otimes r_{\mu_1} \circ (\varphi^{n_1}_{1}) \otimes | \cdot |^{-\frac{n-1}{2}} 
	. 
	\]
	
	Comme $ \dim \sigma_{\pi_p, \pi_p'} = n_1 = \dim r_{\mu_1} \circ (\varphi^{n_1}_{1}) $, on en déduit que
	\[
	\sigma_{\pi_p, \pi_p'} = r_{\mu_1} \circ (\varphi^{n_1}_{1}) \otimes | \cdot |^{-\frac{n-1}{2}}.
	\]
	
	\section{Appendice} \phantomsection \label{itm : appendice}
	Dans cet appendice on démontre un analogue plus faible du \ref{text : thm} pour $F^{+}$ un corps totalement réel de degré impair strictement plus grand que $1$. Le principe de la démonstration repose sur l'étude des variétés de Shimura de type Kottwitz-Harris-Taylor $\Sh_{ / \overset{\bullet}{E}}$ définies sur leur corps reflex $\overset{\bullet}{E} = \overset{\bullet}{F}$ où $\overset{\bullet}{F} = \mathcal{K} F^{+}$ avec $\mathcal{K}$ un corps quadratique imaginaire et plus particulièrement sur la géométrie de la fibre spéciale en une place $p$ inerte dans $\overset{\bullet}{F}$ d'un modèle $\Sh_{/ \mathcal{O}_p}$ où $\mathcal{O}_p$ est l'anneau des entiers de $\overset{\bullet}{F}_p$.
	\begin{Propositionn} \phantomsection \label{itm : résultat partiel}
		Soit $ \varphi : W_{\Q_p} \longrightarrow \big( \prod_{\tau \in \Phi} GL_n(\C) \times \C^{\times} \big) \rtimes W_{\Q_p} $ un $L$-paramètre discret de $G(\Q_p)$. Notons $\Pi_{\varphi}(G(\Q_p))$ et $\Pi_{\varphi}(J_b(\Q_p))$ le $L$-paquet (supercuspidal) respectivement de $G(\Q_p)$ et de $J_b(\Q_p)$ correspondant à $\varphi$. Alors pour $\pi_p'$ une représentation cuspidale dans $\Pi_{\varphi}(G(\Q_p))$ on a
		\begin{enumerate}
			\item[i)] $\sigma^i_{\pi_p, \pi_p'} = 0$ si $ i \neq n-1 $.
			\item[ii)] $\sigma_{\pi_p, \pi_p'} = 0$ si $\pi_p \notin \Pi_{\varphi}(\J_b(\Q_p))$,
			\item[iii)] 
			\[
			\dim \sigma_{\pi_p, \pi_p'} = \dim \hom_{S_{\varphi}}( \tau_{\pi_p'} \otimes \tau_{\pi_p} ,r_{\mu} \circ \varphi_{F_p} ) \otimes | \cdot |^{-\frac{n-1}{2}}.
			\]
		\end{enumerate}
		
	\end{Propositionn}
	\begin{proof} 
		Soit $\mathcal{D} = ( \overset{\bullet}{F}, B, *, V, \langle \cdot | \cdot \rangle, h, \overset{\bullet}{G})$ une donnée globale de type PEL globalisant la donnée locale de sorte que $\End_B(V)$ est une algèbre simple qui est déployée en toutes places finies comme dans la proposition \ref{itm: globale -> locale}. Soit $\Sh$ la variété de Shimura associée, le groupe $\overset{\bullet}{G}(\Q_p) = G(\Q_p)$ est le groupe des similitudes unitaires quasi-déployé en $n$ variables. En particulier $\Sh$ est de signature $(1, n-1), (0, n), \cdots, (0, n)$ à l'infini et $\Sh$ est donc \textit{compacte}.
		
		On note $\mathcal{M} (\mathcal{D}_{\Q_p}, b)$ l'espace de Rapoport-Zink associé à la donnée locale.
		
		Soit $\phi$ une classe d'isogénie intervenant dans la strate basique et $ I := (I^{\phi})$ le groupe réductif associé. On sait que $I(\R)$ est la forme compacte modulo le centre de $\overset{\bullet}{G}(\R)$, que $I(\Q_p) = \J_b(\Q_p) $ et que $ I(\A_f^p) = \overset{\bullet}{G}(\A_f^p) $. D'après la proposition \ref{itm : suite spectrale} il y a une suite spectrale $\overset{\bullet}{G}(\A_f) \times W_{F_p}$-équivariante:
		\begin{equation} \phantomsection \label{itm : 1sss}
			E_2^{tq} = | \ker^1 (\Q, \overset{\bullet}{G}) | \sum_{\substack{\Pi \in \mathcal{A}(I) \\ \Pi_{\infty} = \breve{\xi}}} \left( \Ext^t_{\J_{b}(\Q_p)} \left( H^q_c(\mathcal{M}), \Pi_p \right)_{cusp}  \right) \otimes (\Pi^{\infty, p}) \Longrightarrow \left( H^{t+q}(\Sh, \mathcal{L}_{\xi}) \right)_{p-cusp}. 
		\end{equation}	
		
		Fixons une représentation $\pi_p \in \Pi_{\varphi}(\J_b(\Q_p))$ et  appliquons la proposition \ref{itm : globaliser auto} (et aussi le lemme \ref{itm: torsion non ramifiée}) pour $I$ et la représentation $\pi_p$; on trouve une représentation automorphe $\overline{\Pi} \in \mathcal{A}(I(\A))$ telle que 
		\begin{enumerate}
			\item [-] $\overline{\Pi}_p = \pi_p, \ \overline{\Pi}_{\infty} = \breve{\xi} $, 
			\item[-] $ \overline{\Pi} \cong \Pi'$ dès lors que $ (\overline{\Pi})^p \cong (\Pi')^p $.
		\end{enumerate}	
		
		En utilisant le même argument que dans l'étape $2$ de la démonstration du théorème principal on en déduit que 
		\[
		\dim \rho (\pi_p' \otimes \overline{\Pi}^{\infty, p}) = \dim \sigma_{\pi_p, \pi_p'},
		\]
		où $\rho (\pi_p' \otimes \overline{\Pi}^{\infty, p})$ est la partie $[\pi_p' \otimes \overline{\Pi}^{\infty, p}]$-isotypique dans la cohomologie de $\Sh$.
		
		D'autre part on a
		\[
		\dim \rho (\pi_p' \otimes \overline{\Pi}^{\infty, p}) = \sum_{i =1}^n m(\pi_i \otimes \pi_p' \otimes \widetilde{\Pi}^{\infty, p}).
		\]
		
		La formule de multiplicité \ref{itm: global} couplée avec \ref{itm: de U à GU} implique alors que $m(\pi_i \otimes \pi_p' \otimes \widetilde{\Pi}^{\infty, p})$ est soit nulle soit égale à $1$. On en déduit donc que $\dim \sigma_{\pi_p, \pi_p'}$ est égale au nombre de représentations $\pi_i$ telle que $\overline{\tau}_{\breve{\xi}} \cdot \overline{\tau}_{\pi_i} = \overline{\tau}_{\pi_p} \cdot \overline{\tau}_{\pi_p'} $. Nous utilisons le calcul fait dans l'étape $3$ de la démonstration du théorème principal pour conclure.
	\end{proof}
	\begin{thmm} \phantomsection \label{itm : thm partiel}
		Soit $ \varphi : W_{\Q_p} \longrightarrow \big( \prod_{\tau \in \Phi} GL_n(\C) \times \C^{\times} \big) \rtimes W_{\Q_p} $ un $L$-paramètre discret de $G(\Q_p)$. Notons $\Pi_{\varphi}(G(\Q_p))$ et $\Pi_{\varphi}(J_b(\Q_p))$ le $L$-paquet (supercuspidal) respectivement de $G(\Q_p)$ et de $J_b(\Q_p)$ correspondant à $\varphi$. Alors pour $\pi_p'$ une représentation cuspidale dans $\Pi_{\varphi}(G(\Q_p))$ on a 
		\[
		\sum_{\pi_p \in \Pi_{\varphi}(J_b(\Q_p))} \sigma_{\pi_p, \pi_p'} = \left( r_{\mu} \circ \varphi_{F_p} \right)  \otimes | \cdot |^{-\frac{n-1}{2}}.
		\] 
	\end{thmm}
	\begin{proof}  
		
		Soit $\mathcal{D}$ une donnée globale de type PEL globalisant la donnée locale comme dans la proposition \ref{itm: globale -> locale} de sorte que $\End_B(V)$ est une \textit{algèbre à division} qui est en toute place finie soit déployée, soit une algèbre à division. Soit $\Sh$ la variété associée, le groupe $\overset{\bullet}{G}(\Q_p) = GU^*(n)$ est le groupe de similitudes unitaires quasi-déployé en $n$ variables. En particulier $\Sh$ est de signature $(1, n-1), (0, n) \cdots (0, n)$ à l'infini et $\Sh$ est \textit{compacte} car $\End_B(V)$ est une algèbre à division. 
		
		On note $\mathcal{M} (\mathcal{D}_{\Q_p}, b)$ l'espace de Rapoport-Zink associé à la donnée locale.
		
		Soit $\phi$ une classe d'isogénie intervenant dans la strate basique et $I^{\phi}$ le groupe réductif associé. On sait que $I^{\phi}(\R)$ est la forme compacte modulo le centre de $\overset{\bullet}{G}(\R)$, que $I^{\phi}(\Q_p) = \J_b(\Q_p) $ et que $ I^{\phi}(\A_f^p) = \overset{\bullet}{G}(\A_f^p) $. D'après la proposition \ref{itm : suite spectrale}, il y a une suite spectrale $\overset{\bullet}{G}(\A_f) \times W_{F_p}$-équivariante
		\begin{equation} \phantomsection \label{itm : ss}
			E_2^{tq} = | \ker^1 (\Q, \overset{\bullet}{G}) | \sum_{\substack{\Pi \in \mathcal{A}(I^{\phi}) \\ \Pi_{\infty} = \breve{\xi}}} \left( \Ext^t_{\J_{b}(\Q_p)} \left( H^q_c(\mathcal{M}), \Pi_p \right)_{cusp}  \right) \otimes (\Pi^{\infty, p}) \Longrightarrow \left( H^{t+q}(\Sh, \mathcal{L}_{\xi}) \right)_{p-cusp} 
		\end{equation}		
		
		On choisit une représentation $\xi$ de dimension finie de $I^{\phi}(\C)$. Soit $\Pi(\xi)$ le $L$-paquet des représentations de séries discrètes de $\overset{\bullet}{G}(\R)$ cohomologiques pour $\xi$.
		
		Considérons une représentation supercuspidale $\widetilde{\pi}_p $ dans $\Pi_{\varphi}(G(\Q_p))$. D'après \cite{Clo86}, il existe une représentation automorphe $ \widetilde{\Pi} $ de $\overset{\bullet}{G}$ telle que $ \widetilde{\Pi}_p = \widetilde{\pi}_p $ et $ \widetilde{\Pi}_{\infty} \in \Pi(\xi) $ et que $ \widetilde{\Pi}_{w_0} $ est supercuspidale pour une place $w_0$ inerte. On peut supposer de plus que pour toute place finie décomposée $v$ telle que $\End_B(V)$ est une algèbre à division, la composante $\widetilde{\Pi}_v$ et $JL(\widetilde{\Pi}_v)$ soient supercuspidales, où $JL$ est l'application de Jacquet-Langlands. 
		
		Maintenant, en appliquant le cas $(A)$ du théorème $3.1.6$  de \cite{HL04} pour les groupes de similitudes unitaires globaux $\overset{\bullet}{G}$ et $ I^{\phi} $, on obtient une représentation automorphe $\widetilde{\Pi}^*$ de $I^{\phi}$ telle que $ (\widetilde{\Pi}^*)_w = (\widetilde{\Pi})_w $ pour toute place finie $w \neq p$ et $(\widetilde{\Pi}^*)_{\infty} = \xi $.
		
		En prenant la partie $ \widetilde{\Pi}^{\infty, p}$-isotypique de la suite spectrale (\ref{itm : ss}), on a 
		\begin{equation*}
			E_2^{tq} = | \ker^1 (\Q, \overset{\bullet}{G}) | \sum_{\substack{\Pi \in \mathcal{A}(I^{\phi}) \\ \Pi^{p} = (\widetilde{\Pi}^*)^{p} }} \left(\Ext^t_{\J_{b}(\Q_p)} \left( H^q_c(\mathcal{M}), \Pi_p \right)_{cusp}  \right) \Longrightarrow \left( H^{t+q}(\Sh, \mathcal{L}_{\xi}) \right)_{p-cusp}[\widetilde{\Pi}^{\infty, p}]
		\end{equation*}
		\begin{lemmee} \phantomsection \label{itm : même paquet}
			Soit $\Pi \in \mathcal{A}(I^{\phi})$ telle que $ \Pi^p = (\widetilde{\Pi}^*)^p $ alors $\Pi_p$ est dans le paquet $\Pi_{\varphi}(J_b(\Q_p))$.
		\end{lemmee}
		\begin{proof}
			On pose $S_0$ l'ensemble des places finies décomposées telles que $End_B(V)$ est une algèbre à division et $S = S_0 \cup \{ w_0 \}$ où $w_0$ est la place inerte qu'on fixée auparavant. On peut supposer que $|S_0| \geq 2 $.
			
			D'après la proposition 10.1.1 de \cite{Far04}, il existe un groupe de similitudes unitaires global $GU^1$ tel que $(GU^1)_v$ est quasi-déployé si $v$ est décomposée et $(GU^1)_w = (I^{\phi})_w$ pour toute $w \notin S$. 
			
			D'après le cas $(B)$ du théorème $3.1.6$ de \cite{HL04}, il existe une représentation automorphe $\Pi_1$ de $GU^1(\A)$ telle que $ (\Pi_1)_w = \Pi_w $ pour toute $ w \notin S $. On obtient en particulier $ (\Pi_1)_p = \Pi_p $.
			
			Par le même argument, on obtient un groupe de similitudes unitaires $GU^2$ tel que $(GU^2)_v$ est quasi-déployé si $v$ est décomposée et $(GU^2)_w = (\overset{\bullet}{G})_w$ pour toute $w \notin S$ ainsi qu'une représentation automorphe $\Pi_2$ de $GU^2(\A)$ satisfaisant $ (\Pi_2)_w = \widetilde{\Pi}_w $ pour toute $w \notin S$. En particulier on a $(\Pi_2)_p = \widetilde{\pi}_p$ et $(\Pi_2)_w = (\Pi_1)_w$ pour presque toute $w$.
			
			Puisque $GU^1$ et $GU^2$ sont des formes intérieures pures, le théorème \ref{itm: global} implique que le $L$-paramètre global $\Psi_1$ de $\Pi_1$ et le $L$-paramètre global $\Psi_2$ de $\Pi_2$ sont identiques. Comme $(\Pi_2)_p = \widetilde{\pi}_p$ est dans le paquet $\Pi_{\varphi}(G(\Q_p))$, on en déduit alors que la représentation $ (\Pi_1)_p = \Pi_p $ est dans le paquet $\Pi_{\varphi}(J_b(\Q_p))$.	
		\end{proof}
		
		Puisque le paquet $\Pi_{\varphi}(J_b(\Q_p))$ ne contient que des représentations supercuspidales, d'après le lemme \ref{itm : même paquet} on voit que $\Pi_p$ est supercuspidale dès que $\Pi^p = (\widetilde{\Pi}^*)^p$. D'autre part, le lemme \ref{itm: dégénère} implique que $\Ext^t_{\J_{b}(\Q_p)} \left( H^q_c(\mathcal{M}_{U_p}, \overline{\Q}_{\ell}(n-1)), \Pi_p \right) = 0$ si $t > 0$. Donc la suite spectrale ci-dessus  dégénère. On obtient donc des isomorphismes:
		\begin{equation} \phantomsection \label{itm: 9.0}
			| \ker^1 (\Q, \overset{\bullet}{G}) | \sum_{\substack{\Pi \in \mathcal{A}(I^{\phi}) \\ \Pi^{p} = (\widetilde{\Pi}^*)^{p} }} \left(  \hom_{\J_b(\Q_p)} \left( H^i_c(\mathcal{M}), \Pi_p \right)_{cusp}  \right) = \left( H^{i}(\Sh, \mathcal{L}_{\xi}) \right)_{p-cusp}[\widetilde{\Pi}^{\infty, p}].
		\end{equation} 
		
		D'autre part, la formule de Matsushima nous donne une décomposition:
		\begin{equation} \phantomsection \label{itm : 10.0}
			H^i(\Sh, \mathcal{L}_{\xi})_{\text{p-cusp}} = | \ker^1 (\Q, \overset{\bullet}{G}) | \sum_{\substack{\Pi \in \mathcal{A}(\overset{\bullet}{G})_{\xi} \\ \Pi_p \text{supercuspidale}} } \Pi^{\infty} \otimes \rho_i(\Pi^{\infty}),
		\end{equation}
		où $\rho_i(\Pi^{\infty})$ est une représentation continue de dimension finie de $Gal(\overline{\overset{\bullet}{F}}/\overset{\bullet}{F})$ et $\mathcal{A}(\overset{\bullet}{G})_{\xi}$ est l'ensemble des représentations automorphes de $\overset{\bullet}{G}$ cohomologiques pour $\xi$.
		
		Puisque l'application $\eta_{\chi_{\kappa}, *}$ est injective (voir \ref{itm : changement de base}) et $\Pi_v$ est supercuspidale, le même argument que celui dans $A.7.8$ de \cite{Far04} montre le résultat suivant :
		\[
		\rho_i (\pi'_p \otimes \Pi^{\infty, p}) = \left\lbrace \begin{array}{ccc}
			0 & si & i \neq n-1 = \dim \Sh \\
			\left( r_{\mu} \circ \varphi_{F_p} \right)^{m(\Pi)}  \otimes | \cdot |^{-\frac{n-1}{2}}  & si & i = n-1.
		\end{array} \right.
		\]
		
		En comparant les égalités (\ref{itm: 9.0}) et (\ref{itm : 10.0}) avec le calcul de $\rho_i (\pi'_p \otimes \Pi^{\infty, p})$, on en déduit que :
		\begin{equation} \phantomsection \label{itm : 1er résultat}
			\sum_{\substack{\overline{\pi}_p \in \Pi_{\varphi}(J_b(\Q_p))}} \left(\hom_{\J_b(\Q_p)} \left( H^{n-1}_c(\mathcal{M}), \overline{\pi}_p \right)_{cusp} \right)^{a_{I^{\phi}}(\overline{\pi}_p)} = \sum_{\substack{\pi'_p \in \Pi_{\varphi}(G(\Q_p))}} \pi'_p \otimes \left( r_{\mu} \circ \varphi_{F_p} \right)^{a_{\overset{\bullet}{G}}(\pi_p')}  \otimes | \cdot |^{-\frac{n-1}{2}},
		\end{equation}
		%	égale 
		%	\[
		%	\sum_{\substack{\pi'_p \in \Pi_{\varphi}(G(\Q_p))}} \pi'_p \otimes \left( r_{\mu} \circ \varphi_{F_p} \right)^{a_{\overset{\bullet}{G}}(\pi_p')}  \otimes | \cdot |^{-\frac{n-1}{2}}
		%	\]
		où $a_{I^{\phi}}(\overline{\pi}_p)$ est la multiplicité de $ \breve{\xi} \otimes \overline{\pi}_p \otimes \widetilde{\Pi}^{\infty, p} $ dans l'espace des formes automorphes de $I^{\phi}$ et $a_{\overset{\bullet}{G}}(\pi_p')$ est la multiplicité de $ \pi_{\xi} \otimes \pi_p' \otimes \widetilde{\Pi}^{\infty, p} $ dans l'espace des formes automorphes de $\overset{\bullet}{G}$ (d'après le théorème 3.1.7 de \cite{HL04}, la multiplicité $a_{\overset{\bullet}{G}}(\pi_p')$ ne dépend pas de $\pi_{\xi} \in \Pi (\rho)$).
		
		Pour $\widetilde{\pi}_p$ la représentation supercuspidale dans $ \Pi_{\varphi}(G(\Q_p)) $ fixée, on a en particulier l'égalité :
		\[
		\sum_{\pi_p \in \Pi_{\varphi}(J_b(\Q_p))} (\sigma_{\pi_p, \pi_p'})^{a_{I^{\phi}}(\pi_p)} = \left( r_{\mu} \circ \varphi_{F_p} \right)^{a_{\overset{\bullet}{G}}(\widetilde{\pi}_p)}  \otimes | \cdot |^{-\frac{n-1}{2}}.
		\]
		
		De plus comme $\varphi$ est un paramètre discret et $\mu$ est de signature $(1, n-1), (0, n), \cdots ,(0, n)$ on en déduit que $ (r_{\mu} \circ \varphi_{ F_p}) $ est une représentation de dimension $n$ et est une somme (sans multiplicité) des représentations irréductibles deux à deux non isomorphes. D'autre part on a $ a_{\overset{\bullet}{G}}(\widetilde{\pi}_p) > 0 $ et d'après la proposition \ref{itm : résultat partiel} on a 
		$  \sum_{\pi_p \in (\Pi_{\varphi}(\J_b(\Q_p)))} \dim \sigma_{\pi_p, \pi_p'} = n, $	
		ce qui implique	
		$$\sum_{\pi_p \in \Pi_{\varphi}(J_b(\Q_p))} \sigma_{\pi_p, \pi_p'} = \left( r_{\mu} \circ \varphi_{F_p} \right)  \otimes | \cdot |^{-\frac{n-1}{2}}.$$ 
	\end{proof}
	\bibliography{Bib}

\begin{thebibliography}{KMSW14}

\bibitem[Art13a]{Art4}
James Arthur.
\newblock Classifying automorphic representations.
\newblock In {\em Current developments in mathematics 2012}, pages 1--58. Int.
  Press, Somerville, MA, 2013.

\bibitem[Art13b]{Arthur}
James Arthur.
\newblock {\em The endoscopic classification of representations}, volume~61 of
  {\em American Mathematical Society Colloquium Publications}.
\newblock American Mathematical Society, Providence, RI, 2013.
\newblock Orthogonal and symplectic groups.

\bibitem[BC83]{BC}
A.~Borel and W.~Casselman.
\newblock {$L\sp{2}$}-cohomology of locally symmetric manifolds of finite
  volume.
\newblock {\em Duke Math. J.}, 50(3):625--647, 1983.

\bibitem[Bor74]{Bo74}
Armand Borel.
\newblock Stable real cohomology of arithmetic groups.
\newblock {\em Ann. Sci. \'{E}cole Norm. Sup. (4)}, 7:235--272 (1975), 1974.

\bibitem[Boy99]{Boyer99}
P.~Boyer.
\newblock Mauvaise r\'{e}duction des vari\'{e}t\'{e}s de {D}rinfeld et
  correspondance de {L}anglands locale.
\newblock {\em Invent. Math.}, 138(3):573--629, 1999.

\bibitem[Boy09]{Boyer09}
Pascal Boyer.
\newblock Monodromie du faisceau pervers des cycles \'{e}vanescents de quelques
  vari\'{e}t\'{e}s de {S}himura simples.
\newblock {\em Invent. Math.}, 177(2):239--280, 2009.

\bibitem[CHLN11]{CHLN}
Laurent Clozel, Michael Harris, Jean-Pierre Labesse, and Bao-Ch\^{a}u Ng\^{o},
  editors.
\newblock {\em On the stabilization of the trace formula}, volume~1 of {\em
  Stabilization of the Trace Formula, Shimura Varieties, and Arithmetic
  Applications}.
\newblock International Press, Somerville, MA, 2011.

\bibitem[Fal02]{Fal}
Gerd Faltings.
\newblock A relation between two moduli spaces studied by {V}. {G}. {D}rinfeld.
\newblock In {\em Algebraic number theory and algebraic geometry}, volume 300
  of {\em Contemp. Math.}, pages 115--129. Amer. Math. Soc., Providence, RI,
  2002.

\bibitem[Far04]{Far04}
Laurent Fargues.
\newblock Cohomologie des espaces de modules de groupes {$p$}-divisibles et
  correspondances de {L}anglands locales.
\newblock Number 291, pages 1--199. 2004.
\newblock Vari\'{e}t\'{e}s de Shimura, espaces de Rapoport-Zink et
  correspondances de Langlands locales.

\bibitem[FGL08]{FGL}
Laurent Fargues, Alain Genestier, and Vincent Lafforgue.
\newblock {\em L'isomorphisme entre les tours de {L}ubin-{T}ate et de
  {D}rinfeld}, volume 262 of {\em Progress in Mathematics}.
\newblock Birkh\"{a}user Verlag, Basel, 2008.

\bibitem[GL16]{LJ}
L.~Guerberoff and J.~Lin.
\newblock Galois equivariance of critical values of l-functions for unitary
  groups.
\newblock 2016.

\bibitem[Hen00]{Hen00}
Guy Henniart.
\newblock Une preuve simple des conjectures de {L}anglands pour {${\rm GL}(n)$}
  sur un corps {$p$}-adique.
\newblock {\em Invent. Math.}, 139(2):439--455, 2000.

\bibitem[HL04]{HL04}
Michael Harris and Jean-Pierre Labesse.
\newblock Conditional base change for unitary groups.
\newblock {\em Asian J. Math.}, 8(4):653--683, 2004.

\bibitem[HT01]{HT01}
Michael Harris and Richard Taylor.
\newblock {\em The geometry and cohomology of some simple {S}himura varieties},
  volume 151 of {\em Annals of Mathematics Studies}.
\newblock Princeton University Press, Princeton, NJ, 2001.
\newblock With an appendix by Vladimir G. Berkovich.

\bibitem[JS81]{JS}
H.~Jacquet and J.~A. Shalika.
\newblock On euler products and the classification of automorphic
  representations. {II}.
\newblock {\em Amer. J. Math.}, 103:777--815, 1981.

\bibitem[KMSW14]{KMSW}
Tasho Kaletha, Alberto Minguez, Sug~Woo Shin, and Paul-James White.
\newblock Endoscopic classification of representations: Inner forms of unitary
  groups, 2014.

\bibitem[Kot84]{Kot84}
Robert~E. Kottwitz.
\newblock Stable trace formula: cuspidal tempered terms.
\newblock {\em Duke Math. J.}, 51(3):611--650, 1984.

\bibitem[Kot92]{Kot92}
Robert~E. Kottwitz.
\newblock Points on some {S}himura varieties over finite fields.
\newblock {\em J. Amer. Math. Soc.}, 5(2):373--444, 1992.

\bibitem[Kot97]{Kot97}
Robert~E. Kottwitz.
\newblock Isocrystals with additional structure. {II}.
\newblock {\em Compositio Math.}, 109(3):255--339, 1997.

\bibitem[Kot14]{Kot14}
Robert~E. Kottwitz.
\newblock B(g) for all local and global fields.
\newblock 2014.

\bibitem[Lan79]{Lang79}
R.~P. Langlands.
\newblock Automorphic representations, {S}himura varieties, and motives. {E}in
  {M}\"{a}rchen.
\newblock In {\em Automorphic forms, representations and {$L$}-functions
  ({P}roc. {S}ympos. {P}ure {M}ath., {O}regon {S}tate {U}niv., {C}orvallis,
  {O}re., 1977), {P}art 2}, Proc. Sympos. Pure Math., XXXIII, pages 205--246.
  Amer. Math. Soc., Providence, R.I., 1979.

\bibitem[Lan89]{Lang}
R.~P. Langlands.
\newblock On the classification of irreducible representations of real
  algebraic groups.
\newblock In {\em Representation theory and harmonic analysis on semisimple
  {L}ie groups}, volume~31 of {\em Math. Surveys Monogr.}, pages 101--170.
  Amer. Math. Soc., Providence, RI, 1989.

\bibitem[Lan16]{Lan}
Kai-Wen Lan.
\newblock Compactifications of {PEL}-type {S}himura varieties in ramified
  characteristics.
\newblock {\em Forum Math. Sigma}, 4:e1, 98, 2016.

\bibitem[Loo88]{Lo}
Eduard Looijenga.
\newblock {$L^2$}-cohomology of locally symmetric varieties.
\newblock {\em Compositio Math.}, 67(1):3--20, 1988.

\bibitem[LR91]{LoR}
E.~Looijenga and M.~Rapoport.
\newblock Weights in the local cohomology of a {B}aily-{B}orel
  compactification.
\newblock In {\em Complex geometry and {L}ie theory ({S}undance, {UT}, 1989)},
  volume~53 of {\em Proc. Sympos. Pure Math.}, pages 223--260. Amer. Math.
  Soc., Providence, RI, 1991.

\bibitem[LS18]{LS}
Kai-Wen Lan and Beno\^{\i}t Stroh.
\newblock Nearby cycles of automorphic \'{e}tale sheaves.
\newblock {\em Compos. Math.}, 154(1):80--119, 2018.

\bibitem[Man05]{Man05}
Elena Mantovan.
\newblock On the cohomology of certain {PEL}-type {S}himura varieties.
\newblock {\em Duke Math. J.}, 129(3):573--610, 2005.

\bibitem[Man08]{Man08}
Elena Mantovan.
\newblock On non-basic {R}apoport-{Z}ink spaces.
\newblock {\em Ann. Sci. \'{E}c. Norm. Sup\'{e}r. (4)}, 41(5):671--716, 2008.

\bibitem[Man11]{Man11}
Elena Mantovan.
\newblock {$l$}-adic \'{e}tale cohomology of {PEL} type {S}himura varieties
  with non-trivial coefficients.
\newblock In {\em W{IN}---women in numbers}, volume~60 of {\em Fields Inst.
  Commun.}, pages 61--83. Amer. Math. Soc., Providence, RI, 2011.

\bibitem[Mg07]{Moe}
Colette M\oe~glin.
\newblock Classification et changement de base pour les s\'{e}ries discr\`etes
  des groupes unitaires {$p$}-adiques.
\newblock {\em Pacific J. Math.}, 233(1):159--204, 2007.

\bibitem[Mok15]{Mok}
Chung~Pang Mok.
\newblock Endoscopic classification of representations of quasi-split unitary
  groups.
\newblock {\em Mem. Amer. Math. Soc.}, 235(1108):vi+248, 2015.

\bibitem[Mor10]{Mo}
Sophie Morel.
\newblock {\em On the cohomology of certain noncompact {S}himura varieties},
  volume 173 of {\em Annals of Mathematics Studies}.
\newblock Princeton University Press, Princeton, NJ, 2010.
\newblock With an appendix by Robert Kottwitz.

\bibitem[MT02]{MT}
Abdellah Mokrane and Jacques Tilouine.
\newblock Cohomology of {S}iegel varieties with {$p$}-adic integral
  coefficients and applications.
\newblock Number 280, pages 1--95. 2002.
\newblock Cohomology of Siegel varieties.

\bibitem[Rap95]{Rap94}
Michael Rapoport.
\newblock Non-{A}rchimedean period domains.
\newblock In {\em Proceedings of the {I}nternational {C}ongress of
  {M}athematicians, {V}ol. 1, 2 ({Z}\"{u}rich, 1994)}, pages 423--434.
  Birkh\"{a}user, Basel, 1995.

\bibitem[RZ96]{RZ96}
M.~Rapoport and Th. Zink.
\newblock {\em Period spaces for {$p$}-divisible groups}, volume 141 of {\em
  Annals of Mathematics Studies}.
\newblock Princeton University Press, Princeton, NJ, 1996.

\bibitem[She14]{Shen}
Xu~Shen.
\newblock On the {H}odge-{N}ewton filtration for {$p$}-divisible groups with
  additional structures.
\newblock {\em Int. Math. Res. Not. IMRN}, (13):3582--3631, 2014.

\bibitem[Shi12a]{Shin1}
Sug~Woo Shin.
\newblock Automorphic {P}lancherel density theorem.
\newblock {\em Israel J. Math.}, 192(1):83--120, 2012.

\bibitem[Shi12b]{Shin}
Sug~Woo Shin.
\newblock On the cohomology of {R}apoport-{Z}ink spaces of {EL}-type.
\newblock {\em Amer. J. Math.}, 134(2):407--452, 2012.

\bibitem[SS90]{SS}
Leslie Saper and Mark Stern.
\newblock {$L_2$}-cohomology of arithmetic varieties.
\newblock {\em Ann. of Math. (2)}, 132(1):1--69, 1990.

\bibitem[SW20]{SW17}
Peter Scholze and Jared Weinstein.
\newblock Berkeley lectures on $p$-adic geometry.
\newblock 2020.

\end{thebibliography}
	\bibliographystyle{alpha}
\end{document}